\theoremstyle{plain}
\newtheorem{theorem}{\bf{\textsc{Theorem}}}[section]
\newtheorem{theoremAlpha}{\bf{\textsc{Theorem}}}
\newtheorem{corollary}[theorem]{\bf{\textsc{Corollary}}}
\newtheorem{lemma}[theorem]{\bf{\textsc{Lemma}}}
\newtheorem{proposition}[theorem]{\bf{\textsc{Proposition}}}
\newtheorem*{conjecture}{\bf\textsc{Conjecture}}
\numberwithin{equation}{section}
\theoremstyle{definition}
\theoremstyle{definition}
\newtheorem{remark}[theorem]{\bf{\textsc{Remark}}}
\newcommand{\CC}{\mathbb{C}}				
\newcommand{\RR}{\mathbb{R}}				
\newcommand{\NN}{\mathbb{N}}				
\newcommand{\PP}{\mathbb{P}}				
\newcommand{\ZZ}{\mathbb{Z}}				
\newcommand{\Poly}{\mathcal P}			
\DeclareMathOperator{\Hom}{Hom}			
\DeclareMathOperator{\GL}{GL}				
\DeclareMathOperator{\Id}{Id}				
\DeclareMathOperator{\Ad}{Ad}				
\DeclareMathOperator{\ad}{ad}				
\renewcommand{\Re}{\operatorname{Re}}			
\renewcommand{\Im}{\operatorname{Im}}			
\DeclareMathOperator{\Det}{Det}			
\newcommand{\linebundle}{\mathcal L} 
\renewcommand{\b}[1]{{\bf #1}}
\newcommand{\Vp}{{\mf n^+}}
\newcommand{\Vm}{{\mf n^-}}
\newcommand{\Norm}[1]{\left\|#1\right\|}						
\newcommand{\at}[1]{\big\rvert_{#1}}								
\newcommand{\sP}[2]{\langle#1|#2\rangle}						
\newcommand{\JTP}[3]{\left\{#1,\,#2,\,#3\right\}}		
\newcommand{\B}[2]{B_{#1,\,#2}}											
\newcommand{\mf}[1]{{\mathfrak{#1}}} 								
\newcommand{\Set}[2]{\left\{#1\,\middle|\,#2\right\}} 
\newcommand{\set}[2]{\{#1\,|\,#2\}}								  
\newcommand{\trans}[1]{t_{#1}}											
\newcommand{\half}{{\nicefrac{1}{2}}}			
\renewcommand{\ss}{\textup{ss}}
\newcommand{\sC}{\mathcal C}
\newcommand{\sE}{\mathcal E}
\newcommand{\sN}{\mathcal N}
\newcommand{\sO}{\mathcal O}
\newcommand{\sU}{\mathcal U}
\renewcommand{\sP}{\mathcal P}
\newcommand{\del}{\partial}
\newcommand{\delbar}{\bar\partial}
\newcommand{\iCR}{{\bar D}}
\tikzset{inner sep=0pt, 
  root/.style={circle,draw,minimum size=5pt,thick}, 
  cross/.style={cross out,draw,minimum size=4pt,thick},
  doubleline/.style={double distance=1.5pt,thick},
}
\title[Existence of nearly holomorphic sections]
{Existence of nearly holomorphic sections\\on compact Hermitian symmetric spaces}
\author{Benjamin Schwarz} 
\keywords{Kähler manifold, Hermitian vector bundle, Hermitian symmetric space, invariant Cauchy-Riemann operator, nearly holomorphic section, Jordan pair, harmonic analysis}
\subjclass[2010]{Primary 32M15; Secondary 32D15, 22E46, 32A50, 32L10, 17C50}
\address{Benjamin Schwarz, Universit\"{a}t Paderborn,
Fakult\"{a}t f\"{u}r Elektrotechnik, Informatik und Mathematik,
Institut f\"{u}r Mathematik, Warburger Str. 100,
33098 Paderborn, Germany}
\email{bschwarz@math.upb.de}
\begin{document}

\begin{abstract}
Let $X=U/K$ be a compact Hermitian symmetric space, and let $\sE$ be a $U$-homogeneous Hermitian vector bundle on $X$. In a previous paper, we showed that the space of nearly holomorphic sections is well-adapted for harmonic analysis in $L^2(X,\sE)$ provided that non-trivial nearly holomorphic sections do exist. Here we investigate the problem of extending local nearly holomorphic sections to global ones and prove the existence of non-trivial nearly holomorphic sections. This extends the results on the $U$-type decomposition of $L^2(X,\sE)$ from our previous paper.
\end{abstract}

\maketitle

\section*{Introduction}
In this paper we continue the investigation started in \cite{S12a} of nearly holomorphic sections in holomorphic vector bundles on compact Hermitian symmetric spaces. On any Kähler manifold $X$ and holomorphic vector bundle $\sE$ on $X$, a nearly holomorphic section $f\in\sN(X,\sE)$ is characterized by the condition that for any open set $\sU\subseteq X$ and Kähler potential $\Psi$ on $\sU$, $f$ is given by a polynomial expression
\begin{align}\label{eq:NHSlocally}
	f(z) = \sum_{\b i\in\NN^n} f_\b i(z)\,q(z)^\b i\quad (z\in\sU)
\end{align}
with holomorphic coefficients $f_\b i\in\sO(\sU,\sE_\sU)$, $f_\b i=0$ for almost all $\b i$, and $q_\ell(z):=\frac{\partial\Psi}{\partial z^\ell}$ for some chosen coordinates $(z^1,\ldots,z^n)$ on $\sU$. In \cite{S12a} we proved that the holomorphic coefficients $f_\b i$ are uniquely determined by the choice of the Kähler potential $\Psi$. Moreover, the identity theorem holds for nearly holomorphic sections, i.e., the restriction map 
\[
	\sN(X,\sE)\hookrightarrow\sN(\sU,\sE_\sU),\ f\mapsto f|_\sU
\]
is an embedding. Restricting to Hermitian symmetric spaces of compact type and homogeneous vector bundles, the main goal of this paper is to prove a characterization of the image of this embedding, and hence characterize those local nearly holomorphic sections that extend to global ones. As part of the proof we show the existence of non-trivial nearly holomorphic sections in \emph{any} homogeneous Hermitian vector bundle, which also extends our results from \cite{S12a} concerning harmonic analysis for sections in $\sE$.

Let us describe our results in more detail. Let $X=U/K$ be a Hermitian symmetric space of compact type, and let $\sE=U\times_KE$ be a $U$-homogeneous vector bundle. We assume that $\sE$ is irreducible in the sense that $E$ is an irreducible $K$-module. Let $\mf u$ and $\mf k$ denote the Lie algebras of $U$ and $K$. The complex structure of $X$ corresponds to a grading $\mf u_\CC=\mf n^+\oplus\mf k_\CC\oplus\mf n^-$, where $\mf n^\pm$ are abelian subalgebras which are irreducible under the adjoint action of $K$. We may identify $\mf n^+$ holomorphically with an open and dense subset of $X$ and choose a $K$-invariant Kähler potential on $\mf n^+$. In this setting, \eqref{eq:NHSlocally} yields an identification of $\sN(\mf n^+,\sE_{\mf n^+})$ with the space of $\sO(\mf n^+,E)$-valued polynomials on $(\mf n^+)^*\cong\mf n^-$. We prove that the coefficients $f_\b i$ corresponding to global nearly holomorphic sections are in fact polynomial, i.e., the restriction map yields an embedding into the space of $E$-valued polynomials on $\mf n^+\times\mf n^-$,
\[
	\iota_\sN:\sN(X,\sE)\hookrightarrow\Poly(\mf n^+\times\mf n^-,E),\ f\mapsto p_f
	\quad\text{with}\quad
	f|_{\mf n^+}(z) = p_f(z,q(z)).
\]
Here, $q=\partial\Psi$ is considered as a map from $\mf n^+$ to $\mf n^-\cong(\mf n^+)^*$. For the characterization of the image of $\iota_\sN$ we use the Hermitian structure on $\sE$, which is induced from the essentially unique $K$-invariant Hermitian inner product on $E$. Since $\mf n^+\subseteq X$ is dense, the $L^2$-norm of $f\in\sN(X,\sE)$ is given by a certain integral over $\mf n^+$. Therefore, elements of the image of $\iota_\sN$ necessarily satisfy the condition
\begin{align}\label{eq:L2conditionIntro}
	p(z,q(z))\in L^2(X,\sE).
\end{align}
Our main result states that this condition is also sufficient, i.e., $\iota_\sN$ maps $\sN(X,\sE)$ onto the space of polynomials $p\in\Poly(\mf n^+\times\mf n^-,E)$ satisfying \eqref{eq:L2conditionIntro}, denoted by $\Poly^2(\mf n^+\times\mf n^-,E)$. Proving estimates for the $L^2$-norm corresponding to particular polynomials, we also show that there always exist non-trivial nearly holomorphic sections. In summary,

\begin{theoremAlpha}[Theorem~\ref{thm:existenceofnearlyholsec}, Theorem~\ref{thm:MinorIntegral}]\ 
	\label{thm:A}\\
	Let $X=U/K$ be a compact Hermitian symmetric space, and $\sE=U\times_K E$ be an irreducible 
	$U$-homogeneous Hermitian vector bundle. The map 
	\[
		\iota_\sP: \Poly^2(\Vp\times\Vm,E)\to L^2(X,\sE),\ p\mapsto f_p(z):=p(z,q(z))
	\]
	is an isomorphism onto $\sN(X,\sE)$ with inverse $\iota_\sN$.	Moreover, $\sN(X,\sE)$ is 
	non-trivial.
\end{theoremAlpha}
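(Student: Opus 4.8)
The plan is to prove Theorem~\ref{thm:A} in three stages: (i) show that $\iota_\sP$ is well-defined and that local nearly holomorphic sections given by $L^2$-integrable polynomials extend to global ones, so that $\iota_\sP$ maps into $\sN(X,\sE)$; (ii) verify that $\iota_\sN$ and $\iota_\sP$ are mutually inverse, which is essentially formal once (i) is in place; and (iii) establish non-triviality by exhibiting at least one polynomial $p\in\Poly^2(\Vp\times\Vm,E)$ with $f_p\not\equiv 0$.

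\textbf{Stage (i): the extension problem.}
The key object is the $L^2$-inner product on $\sN(X,\sE)$, which by density of $\Vp\subseteq X$ is computed as a weighted integral over $\Vp$ against the $K$-invariant Hermitian form on $E$, with the weight being a power of the Jordan-theoretic "denominator" associated to the Bergman kernel / Kähler potential $\Psi$. The strategy here is harmonic-analytic: decompose $\Poly^2(\Vp\times\Vm,E)$ under the action of $U$ (or rather of the maximal compact $K$ together with the translations coming from $\Vp$, i.e.\ the Harish-Chandra realization), matching it against the known $U$-type decomposition of $L^2(X,\sE)$ from \cite{S12a}. Concretely, one expands a polynomial $p$ into its $K$-isotypic pieces, uses the structure of $\Poly(\Vp)\otimes\Poly(\Vm)\otimes E$ as a $K$-module (via a Jordan pair / Peirce decomposition argument), and shows that each isotypic summand satisfying the $L^2$-bound corresponds to a genuine finite-dimensional $U$-subrepresentation of $L^2(X,\sE)$ sitting inside $\sN(X,\sE)$. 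The point is that $\sN(X,\sE)$, being $U$-invariant and consisting of sections whose local expansion \eqref{eq:NHSlocally} terminates, is exactly the algebraic sum of those $U$-types realized by terminating polynomial coefficients; the $L^2$-condition is precisely what selects, among all formal polynomial expressions, those that assemble into square-integrable — hence, by ellipticity and $U$-invariance, smooth global — sections. The main obstacle is to control convergence and to identify the image: one must show that every $U$-type appearing in $L^2(X,\sE)$ that admits a nearly holomorphic vector is hit by some $p$ with finite norm, and conversely that the finite-norm condition forces the coefficient expansion to terminate. This is where the explicit norm estimates of Theorem~\ref{thm:MinorIntegral} enter, giving quantitative control of $\Norm{f_p}_{L^2}$ in terms of the bidegree of $p$.

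\textbf{Stages (ii) and (iii): inverse and non-triviality.}
Once $\iota_\sP$ is known to land in $\sN(X,\sE)$, the relation $f|_{\Vp}(z)=p_f(z,q(z))$ defining $\iota_\sN$ together with the identity theorem for nearly holomorphic sections (quoted in the introduction) immediately gives $\iota_\sN\circ\iota_\sP=\mathrm{id}$ on $\Poly^2$ and $\iota_\sP\circ\iota_\sN=\mathrm{id}$ on $\sN(X,\sE)$: injectivity of the restriction map forces a section to be recovered from its polynomial datum, and the uniqueness of the coefficients $f_\b i$ (also from \cite{S12a}) forces $p_f$ to be the unique polynomial. For non-triviality it suffices to produce one nonzero element of $\Poly^2(\Vp\times\Vm,E)$; the natural candidate is built from the lowest $K$-type of $E$ together with the constant-in-$\Vm$ polynomial, i.e.\ take $p(z,w)=v_0$ for a fixed $v_0\in E$ (so $f_p$ is the canonical holomorphic-type section), or more generally a suitable minimal-degree highest-weight vector; the norm estimate of Theorem~\ref{thm:MinorIntegral} shows this has finite $L^2$-norm, so $\sN(X,\sE)\neq 0$. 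The subtlety, and the reason the statement is non-vacuous, is that $\sE$ need not admit global holomorphic sections — so the correct witness is not a holomorphic section but a genuinely nearly holomorphic one of minimal degree, and checking its $L^2$-integrability is exactly the content of the "minor integral" estimate.
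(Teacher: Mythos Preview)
Your outline has the right overall shape but misses the key technical mechanisms at each stage, and Stage~(ii) contains a genuine gap.

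\textbf{Stage (i).} Your proposed route---decompose $\Poly^2$ into $K$-isotypics and match against the $U$-type decomposition of $L^2(X,\sE)$ from \cite{S12a}---is circular: the relevant results in \cite{S12a} (that $\sN(X,\sE)$ equals the $U$-finite vectors) are proved \emph{conditionally} on the existence of non-trivial nearly holomorphic sections, which is exactly what Theorem~A is meant to establish. Also, ``ellipticity'' is not the mechanism for smoothness here. The paper's actual argument is more direct and does not use any global decomposition: one shows that the space $C_k$ of all $f_g$ with $\deg_{\Vm} g\le k$ is $\mf u_\CC$-invariant (from the explicit formulas of Proposition~\ref{prop:uCactiononlocalnearlyhol}) and \emph{finite-dimensional}. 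Finite-dimensionality comes from the $L^2$-condition: writing $g(z,q(z))=G(z)/\Delta(z,-\bar z)^k$ as a rational function and applying Fubini to the integral in Proposition~\ref{prop:localpicturenorm} forces a uniform bound on $\deg_{\Vp} g$. Hence $f_p$ is $U$-finite, therefore smooth, therefore globally nearly holomorphic by Proposition~\ref{prop:NHCharacterization}.

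\textbf{Stage (ii).} You claim $\iota_\sP\circ\iota_\sN=\mathrm{id}$ is immediate from the identity theorem. It is not. A priori $\iota_\sN$ maps a global section $f$ only to an element of $\Poly(\Vm,\sO(\Vp,E))$---holomorphic, not polynomial, coefficients---so one does not yet know that $p_f\in\Poly^2(\Vp\times\Vm,E)$, and $\iota_\sP\circ\iota_\sN$ is not even defined on all of $\sN(X,\sE)$. This is the surjectivity of $\iota_\sP$, and it is the substantive half of the theorem. The paper proves it indirectly: the image $N$ of $\iota_\sP$ is a $U$-invariant $\sN(X)$-submodule of $\sN(X,\sE)$; once $N\neq 0$ (Stage~(iii)), \cite[Lemma~2.3]{S12a} forces $N$ to be dense, and since $U$-finite vectors form a semisimple module this gives $N=\sN(X,\sE)$.

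\textbf{Stage (iii).} Taking $p$ constant fails precisely when $\mu(H_{\alpha_1})<0$, which is the interesting case. You need a concrete witness. The paper uses $p_{\b m}(y)\cdot v$ with $p_{\b m}$ the product of Jordan minors \eqref{eq:JordanMinors}; the estimate $|p_{\b m}(\bar z^{-z})|\le\prod_i(t_i/(1+t_i^2))^{m_r}$ together with Proposition~\ref{prop:BergmanEigenvalues} reduces the $L^2$-integral via polar coordinates to a Selberg-type integral, finite whenever $m_r+\mu(H_{\alpha_1})\ge 0$. This is the content of Theorem~\ref{thm:MinorIntegral}, and it is what your phrase ``minor integral estimate'' should unpack.
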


As a consequence, it follows from our results in \cite{S12a} that $\sN(X,\sE)$ coincides with the space of $U$-finite vectors in $L^2(X,\sE)$, and hence contains all information about the $U$-type decomposition of $L^2(X,\sE)$. It turns out that there is a bijection between $U$-types in $L^2(X,\sE)$ and $K$-types in the subspace $\Poly^2(\mf n^-,E)$ of polynomials in $\Poly^2(\mf n^+\times\mf n^-,E)$ that are constant along $\mf n^+$. More explicitly, choose a Cartan subalgebra of $\mf k$, which then is also a Cartan subalgebra of $\mf u$. For an appropriate choice of positive root system, recall that if $V_\lambda$ is a $U$-type of highest weight $\lambda$ then the subspace $V_\lambda^{\mf n^+}$ of $\mf n^+$-invariants is a $K$-type of the same highest weight.

\begin{theoremAlpha}[Theorem~\ref{thm:UKequivalence}, Corollary~\ref{cor:multiplicitybounds}]\ 
	\label{thm:B}\\
	For all $U$-types $V_\lambda$, there is an isomorphism
	\begin{align*}
		\Hom_U(V_\lambda,L^2(X,\sE))\cong\Hom_K(V_\lambda^{\mf n^+},\Poly^2(\Vm,E)).
	\end{align*}
	Moreover, any $K$-type in $\Poly^2(\Vm,E)$ is isomorphic to $V_\lambda^{\mf n^+}$ for some 
	$\lambda$, and the multiplicities of $U$-types in $L^2(X,\sE)$ are uniformly bounded by the 
	dimension of $E$.
\end{theoremAlpha}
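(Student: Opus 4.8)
The plan is to feed the polynomial model of Theorem~\ref{thm:existenceofnearlyholsec} into the harmonic-analytic machinery of \cite{S12a} and then to compute the $\Vp$-action in that model. Since Theorem~\ref{thm:existenceofnearlyholsec} provides $\sN(X,\sE)\neq 0$, the results of \cite{S12a} apply and identify $\sN(X,\sE)$ with the space of $U$-finite vectors in $L^2(X,\sE)$; in particular $\Hom_U(V_\lambda,L^2(X,\sE))=\Hom_U(V_\lambda,\sN(X,\sE))$ for every $U$-type $V_\lambda$, and $\sN(X,\sE)=\bigoplus_\lambda V_\lambda\otimes M_\lambda$ is an algebraic direct sum of $U$-types with $M_\lambda:=\Hom_U(V_\lambda,\sN(X,\sE))$, on which $\mf u_\CC$ acts through the $V_\lambda$-factors. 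Via $\iota_\sP$ I transport this $\mf u_\CC$-module structure to $\Poly^2(\Vp\times\Vm,E)$.

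The crucial point is to determine the resulting action of the subalgebra $\Vp$. On the big cell $\Vp\subseteq X$ the holomorphic vector field attached to $\xi\in\Vp$ is the constant field $\xi$, but its contribution to the $\mf u$-action on sections carries, in addition, the conjugate of the quadratic ($\Vm$-)vector field $z\mapsto\{z,\cdot,z\}$ coming from the real form. Writing $f=f_p$ with $f_p(z)=p(z,q(z))$ and $q=\del\Psi$, I expect the quadratic term to cancel exactly against the chain-rule contribution produced by $q$, so that $\xi$ acts on $\Poly^2(\Vp\times\Vm,E)$ simply as $p\mapsto-\del_\xi p$, the directional derivative in the first variable; this is precisely the cancellation one meets in the rank-one case, where $q$ satisfies $q(z)=\bar z\,(1-zq(z))$. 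Granting this, the $\Vp$-invariants of $\Poly^2(\Vp\times\Vm,E)$ are the polynomials with vanishing derivative in the $\Vp$-variable, i.e. $\Poly^2(\Vm,E)$, and $\iota_\sN$ carries $\sN(X,\sE)^{\Vp}$ isomorphically onto $\Poly^2(\Vm,E)$ as $K$-modules.

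The rest is formal. Using the recalled fact that $V_\lambda^{\Vp}$ is an irreducible $K$-module of highest weight $\lambda$ (so distinct $\lambda$ give non-isomorphic $K$-types), passing to $\Vp$-invariants in $\sN(X,\sE)=\bigoplus_\lambda V_\lambda\otimes M_\lambda$ yields $\Poly^2(\Vm,E)\cong\sN(X,\sE)^{\Vp}=\bigoplus_\lambda V_\lambda^{\Vp}\otimes M_\lambda$; hence every $K$-type of $\Poly^2(\Vm,E)$ is some $V_\lambda^{\Vp}$, and $\Hom_K(V_\lambda^{\Vp},\Poly^2(\Vm,E))$ has dimension $\dim M_\lambda$. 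Restriction to $V_\lambda^{\Vp}$ gives a map $\Hom_U(V_\lambda,\sN(X,\sE))\to\Hom_K(V_\lambda^{\Vp},\sN(X,\sE)^{\Vp})=\Hom_K(V_\lambda^{\Vp},\Poly^2(\Vm,E))$ which is injective, because $V_\lambda^{\Vp}$ contains the highest weight vector and therefore generates $V_\lambda$ over $\mathcal U(\mf u_\CC)$; comparing dimensions it is an isomorphism, which is the first assertion of Theorem~\ref{thm:UKequivalence}. For Corollary~\ref{cor:multiplicitybounds}, $\dim M_\lambda=\dim\Hom_K(V_\lambda^{\Vp},\Poly^2(\Vm,E))\le\dim\Hom_K(V_\lambda^{\Vp}\otimes E^*,\Poly(\Vm))$, and since $\Poly(\Vm)$ is multiplicity-free as a $K$-module (Hua--Kostant--Schmid) this is at most the length of the $K$-module $V_\lambda^{\Vp}\otimes E^*$, which is $\le\dim E$ because every highest weight vector of a tensor product $W\otimes E^*$ is determined by its component along the highest weight line of $W$.

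The main obstacle is the vector-field computation of the second paragraph: establishing, for arbitrary rank, that the anti-holomorphic quadratic term in the $\Vp$-action on sections is annihilated once one substitutes $q=\del\Psi$. This is the one step that genuinely uses the fine structure relating $q$, the Bergman operators, and the Jordan triple product; once it is settled, the identification $\sN(X,\sE)^{\Vp}\cong\Poly^2(\Vm,E)$ and hence both parts of Theorem~\ref{thm:B} are immediate from Theorem~\ref{thm:existenceofnearlyholsec} together with Kostant's description of $V_\lambda^{\Vp}$.
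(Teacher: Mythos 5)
Your proposal is correct and follows the paper's overall strategy: identify $\sN(X,\sE)$ with the $U$-finite vectors in $L^2(X,\sE)$, transport the $\mf u_\CC$-action to $\Poly^2(\Vp\times\Vm,E)$, observe that $\Vp$-invariance corresponds to constancy along $\Vp$, and combine Kostant's description of $V_\lambda^{\Vp}$ with the multiplicity-freeness of the Hua--Kostant--Schmid decomposition. The one step you defer --- that $\xi\in\Vp$ acts on $f_p(z)=p(z,q(z))$ simply by $p\mapsto-\del_{(\xi,0)}p$, the quadratic vector field being absorbed via the chain rule through $q$ --- is not actually open: it is exactly the first formula of Proposition~\ref{prop:uCactiononlocalnearlyhol}, established earlier from \cite[Lemma~3.2]{S12a} and the Jordan identities for $q$, so your argument closes once you cite it. Where you genuinely differ is in the bookkeeping: the paper constructs the inverse of $\varphi_\lambda$ explicitly, sending $S\in\Hom_K(V_\lambda^{\Vp},\Poly^2(\Vm,E))$ to the $U$-map determined by $v_\lambda\mapsto\iota_\sP(Sv_\lambda)$ on the $U$-type this section generates, whereas you decompose $\sN(X,\sE)=\bigoplus_\lambda V_\lambda\otimes M_\lambda$, take $\Vp$-invariants termwise to get $\Poly^2(\Vm,E)\cong\bigoplus_\lambda V_\lambda^{\Vp}\otimes M_\lambda$, and conclude by Schur's lemma and a dimension count (using finiteness of $M_\lambda$ from Frobenius reciprocity). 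Your version is slightly slicker and yields the statement about $K$-types of $\Poly^2(\Vm,E)$ for free; the paper's version exhibits the highest weight vectors explicitly, which is what is used later, e.g.\ in Theorem~\ref{thm:linebundledecomposition}. Finally, your bound $m^\lambda\le\dim E$ via the length of $V_\lambda^{\Vp}\otimes E^*$ is the dual of the paper's argument, which bounds the multiplicity of $E_\lambda$ in $\Poly_\b m(\Vm)\otimes E$ by $\dim E^{\lambda-\gamma_\b m}$ and sums over the weights of $E$; both rest on the same standard fact about tensoring with an irreducible module.
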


This extends our results from \cite{S12a}, where we assumed that $\sE$ admits non-trivial holomorphic sections in which case $\Poly^2(\mf n^-,E)$ coincides with $\Poly(\mf n^-,E)$. Applying this result to the case of a line bundle $\linebundle$, we determine the precise $U$-type decomposition of $L^2(X,\linebundle)$, which recovers Schlichtkrull's generalization of the Cartan--Helgason theorem \cite{Sch84}, see Theorem~\ref{thm:linebundledecomposition}. As an application to higher rank vector bundles, we consider the holomorphic cotangent bundle $T^{(1,0)*}$ and prove some estimates on the multiplicities of $U$-types contained in $L^2(X,T^{(1,0)*})$, see Theorem~\ref{thm:PolyDecomposition}. However, we note that it is quite a hard problem to give precise formulas for the multiplicities. Our estimates are based on the $K$-type decomposition of $\Poly(\mf n^-,E)$, and it remains to determine which $K$-types are actually contained in $\Poly^2(\mf n^-,E)$. So far, we do not have enough insight into the $L^2$-condition \eqref{eq:L2conditionIntro} in order to determine $\Poly^2(\mf n^-,E)$ more explicitly, see Corollary~\ref{cor:propconverse} for a partial result.\\

This paper is organized as follows. Section~\ref{sec:LocalPicture} is devoted to the proof of the first part of Theorem~\ref{thm:A}. In Section~\ref{sec:Existence} we prove the existence of non-trivial nearly holomorphic sections. Section~\ref{sec:Applications} provides the proof of Theorem~\ref{thm:B} (Section~\ref{subsec:mainresult}), discusses some results on the $L^2$-condition (Section~\ref{sec:ResOnL2Cond}), which then yield applications to line bundles (Section~\ref{subsec:LineBundles}) and to the holomorphic cotangent bundle (Section~\ref{subsec:holCotanBundle}). We end up with a conjecture concerning highest weight vectors and the $L^2$-condition, which might be helpful in determining the precise $K$-type decomposition of $\Poly^2(\mf n^-,E)$.

\noindent
\emph{Acknowledgment:} I would like to thank Joachim Hilgert, Bernhard Krötz and Jan Möllers for helpful discussions on the topic of this paper.

\section{The local picture of nearly holomorphic sections}\label{sec:LocalPicture}
Let $(X,h)$ be a Kähler manifold of dimension $n$ with Kähler metric $h$, and let $\sE\to X$ be a holomorphic vector bundle on $X$. Recall the definition of nearly holomorphic sections in $\sE$ given in \cite[\S 1.2]{S12a}. For the purpose of this paper, it suffices to note the following characterization. Suppose we have an open and dense subset $\sU\subseteq X$ with Kähler potential $\Psi$ and coordinate functions $z=(z^1,\ldots, z^n)$. Set
\[
	q_\ell:\sU\to\CC,\ q_\ell(z):=\frac{\partial\Psi}{\partial z^\ell}(z).
\]
For $\b i\in\NN^r$, we use standard multi-index notation, so $|\b i| = i_1+\cdots+i_n$ and $q(z)^\b i = \prod_{\ell=1}^n q_\ell(z)^{i_\ell}$.

\begin{proposition}\label{prop:NHCharacterization}
	A smooth section $f\in C^\infty(X,\sE)$ is nearly holomorphic if and only if its restriction 
	to $\sU$ admits a polynomial expression
	\begin{align}\label{eq:NHdecomposition}
		f(z) = \sum_{|\b i|\in\NN} f_\b i(z)\,q(z)^\b i
	\end{align}
	with holomorphic coefficients $f_\b i\in\sO(\sU,\sE_\sU)$, $f_\b i=0$ for almost all $\b i$.
	In this case, the coefficients $f_\b i$ are unique. 
\end{proposition}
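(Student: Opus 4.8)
The proposition combines two assertions: that near-holomorphy --- a local, chart-independent condition as set up in \cite[\S1.2]{S12a} --- is already detected by the existence of a polynomial expansion on a \emph{single} dense chart, and that the coefficients in such an expansion are unique. The engine for the first assertion is the transformation behaviour of the functions $q_\ell$. Two K\"ahler potentials on the same open set differ by a pluriharmonic function, which locally equals $2\Re h$ with $h$ holomorphic, so the associated $q_\ell$ differ by the holomorphic functions $\del h/\del z^\ell$; and under a holomorphic change of coordinates $z\mapsto w(z)$ one has $q_\ell=\sum_m \tilde q_m\,(\del w^m/\del z^\ell)$ with holomorphic transition coefficients. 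Hence the property of admitting a finite expansion $\sum_\b i f_\b i\,q^\b i$ with holomorphic $f_\b i$ is insensitive to the choice of potential and coordinates, and is preserved under restriction to smaller open sets.

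With this in hand the ``only if'' direction follows by restricting the local expansions provided by the definition in \cite{S12a} to $\sU$ and transporting them into the chosen potential and coordinates. For the ``if'' direction, fix an arbitrary chart $\sV$ with its own potential and coordinates; on the dense open set $\sU\cap\sV$ the transformation rules above rewrite the given expansion as one of the same shape in the $\sV$-data, with coefficients holomorphic on $\sU\cap\sV$. The one point that is not automatic is that these coefficients extend holomorphically across $\sV\setminus\sU$. I would obtain this from the uniqueness mechanism below: it expresses the coefficients as universal differential-operator expressions in $f$, hence as functions that are a priori smooth on all of $\sV$ and are annihilated by $\delbar$ on the dense set $\sU\cap\sV$, so holomorphic on $\sV$; since the resulting expansion agrees with $f$ on $\sU\cap\sV$ it agrees on $\sV$. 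This produces the required expansions near every point, so $f$ is nearly holomorphic.

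For uniqueness it is enough to show that $\sum_\b i f_\b i\,q^\b i\equiv0$ on $\sU$ with holomorphic $f_\b i$ forces $f_\b i\equiv0$ for all $\b i$. Assume not and let $N$ be the largest value of $|\b i|$ with $f_\b i\not\equiv0$. The key point is that $\del q_\ell/\del\bar z^m=\del^2\Psi/\del z^\ell\,\del\bar z^m=h_{\ell\bar m}$ is the K\"ahler metric, which is everywhere nondegenerate, whereas $\del f_\b i/\del\bar z^m=0$; so each antiholomorphic derivative applied to the relation strictly lowers the top degree. After applying $N$ of them, $\del/\del\bar z^{m_1}\cdots\del/\del\bar z^{m_N}$, only the terms with $|\b i|=N$ survive and one is left with an identity $\sum_{|\b i|=N}c_\b i(m_1,\dots,m_N)\,f_\b i=0$ whose coefficients are sums of products of entries of $(h_{\ell\bar m})$. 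Letting the indices $m_1,\dots,m_N$ vary and using the nondegeneracy of $(h_{\ell\bar m})$ --- a Vandermonde-type argument in auxiliary variables dual to the $\bar z^m$ --- forces $f_\b i\equiv0$ for all $|\b i|=N$, against the choice of $N$; iterating gives the claim. I expect the holomorphic extension of the coefficients in the ``if'' direction to be the one genuinely delicate step; the rest is bookkeeping with the transformation rules together with the linear-algebra argument just sketched.
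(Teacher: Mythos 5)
Your argument is correct in substance, but it takes a genuinely different route from the paper. The paper's proof is a two-line reduction: by definition $f$ is nearly holomorphic iff $\iCR^m f=0$ for some $m$, where $\iCR$ is the globally defined invariant Cauchy--Riemann operator; since this is a continuous (indeed smooth) differential condition and $\sU$ is dense, it holds on $X$ iff it holds on $\sU$, and the equivalence with the expansion \eqref{eq:NHdecomposition} together with uniqueness of the $f_\b i$ on $\sU$ is then quoted from \cite[Proposition~1.5]{S12a}. You instead work directly from the expansion property and reprove the content of that cited proposition: the transformation rules for $q_\ell$ under change of potential and of coordinates, the uniqueness via nondegeneracy of $h_{\ell\bar m}$, and the extension of the coefficients across $X\setminus\sU$ by exhibiting them as smooth differential expressions in $f$ that are $\delbar$-closed on a dense set. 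That last device is in effect a rediscovery of $\iCR$: the operator sending $f=P(z,q(z))$ to $\bigl(\sum_m h^{\bar m\ell}\partial_{\bar m}f\bigr)_\ell=(\partial_{w_\ell}P)(z,q(z))$ is globally defined and smooth, and iterating it both proves uniqueness (the degree-$N$ iterates of $P$ are the constants $\b i!\,f_\b i$, so $P(z,q(z))\equiv0$ forces $P\equiv0$ by downward induction) and produces your candidate coefficients on an arbitrary chart $\sV$. I would only tighten one point of your sketch: applying raw $\partial/\partial\bar z^m$'s does not cleanly isolate the top-degree terms, because later derivatives can hit the metric factors $h_{\ell\bar m}$ created by earlier ones; contracting each $\delbar$ with the inverse metric as above avoids this and replaces the Vandermonde step by the simple observation that the symmetric powers of an invertible matrix are invertible. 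What your approach buys is self-containedness and an explicit formula for the coefficients; what the paper's buys is brevity, at the cost of leaning on the operator-theoretic definition and the earlier paper.
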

\begin{proof}
By definition, $f$ is nearly holomorphic if and only if it is annihilated by the $m$'th invariant Cauchy Riemann operator $\iCR^m$ for some $m\in\NN$, see \cite[\S\,1]{S12a}. Since $\iCR^m$ is a smooth differential operator, it follows that $f$ is nearly holomorphic if and only if the restriction to $\sU\subseteq X$ is nearly holomorphic. Now, the statement immediately follows from \cite[Proposition~1.5]{S12a}.
\end{proof}

The space of all nearly holomorphic sections on $X$ is denoted by $\sN(X,\sE)$, which by definition is a subspace of $C^\infty(X,\sE)$.

We now turn to Hermitian symmetric spaces. Let $G$ be a simply connected and connected complex simple Lie group with holomorphic involution $\sigma$, let $U\subseteq G$ be a $\sigma$-stable maximal compact subgroup with corresponding Cartan involution $\theta$, and set $X:=U/K$ with $K:=U^\sigma$. Then $L:=G^\sigma$ is a reductive Lie group with 1-dimensional center, and $K\subseteq L$ is a maximal compact subgroup. Let $\mf g$, $\mf u$, $\mf l$ and $\mf k$ denote the Lie algebras of $G$, $U$, $K$ and $L$. There exists a unique element $Z_0$ in the center of $\mf l$ whose adjoint action on $\mf g$ induces the grading $\mf g = \mf n^+\oplus\mf l\oplus\mf n^-$ where $\mf n^\pm$ are the $\pm1$ eigenspaces of $\ad_{Z_0}$. We may identify $X$ with the complex quotient $X\cong G/P$, where $P:=L\exp(\mf n^-)$ is a maximal parabolic subgroup of $G$.

We identify $\mf n^+$ with an open and dense subset of $X$ via the exponential map,
\begin{align}\label{eq:embedding}
	\mf n^+\hookrightarrow X=G/P,\ z\mapsto\exp(z)P,
\end{align}
which is a holomorphic embedding. In the sequel, we refer to this identification by simply writing $\mf n^+\subseteq X$.

Let $\sE=G\times_P E$ be the $G$-homogeneous holomorphic vector bundle on $X$ associated to the holomorphic representation $\rho:P\to\GL(E)$. We assume that $\sE$ is \emph{irreducible} in the sense that $\rho$ is irreducible, which implies that $\rho|_{\exp(\mf n^-)}$ is trivial and $\rho|_L$ is irreducible. Then \eqref{eq:embedding} induces a trivialization $\sE_{\mf n^+}\cong\mf n^+\times E$. In particular, the holomorphic tangent bundle $T^{(1,0)}$ is trivialized to $T^{(1,0)}_{\mf n^+}\cong\mf n^+\times\mf n^+$. If $\Psi$ is a Kähler potential on $\mf n^+$, we may regard $q:=\partial\Psi$ as a map $q:\mf n^+\to(\mf n^+)^*$, and we may further identify $(\mf n^+)^*$ with $\mf n^-$ via (the negative of) the Killing form $\kappa$ of $\mf g$, and obtain
\[
	q:\mf n^+\to\mf n^-\quad\text{with}\quad \partial\Psi(z)(v)=-\kappa(v,q(z))
	\quad\text{for all}\quad v\in\mf n^+.
\]
Applying Proposition~\ref{prop:NHCharacterization} to this setting we find that a smooth section $f\in C^\infty(X,\sE)$ is nearly holomorphic if and only if there exists an $\sO(\mf n^+,E)$-valued polynomial $p_f$ on $\mf n^-$ such that the restriction of $f$ to $\mf n^+\subseteq X$ is given by
\[
	f(z) = p_f(z,q(z)).
\]
Here, $p_f(x,y)$ denotes the evaluation of $p_f\in\Poly(\mf n^-,\sO(\mf n^+,E))$ at $y\in\mf n^-$ and $x\in\mf n^+$. As a consequence of Proposition~\ref{prop:NHCharacterization}, we obtain the embedding
\begin{align}
	\iota_\sN: \sN(X,\sE)\hookrightarrow\Poly(\mf n^-,\sO(\Vp,E)),\
	f\mapsto p_f\,.
\end{align}
The main goal of this section is to characterize the image of $\iota_\sN$ in case of a particularly nice Kähler potential.

\begin{lemma}\label{lem:KinvKPot}
	There exists a unique $K$-invariant Kähler potential of $(X,h)$ on $\mf n^+$ satisfying 
	$\Psi(0)=0$.
\end{lemma}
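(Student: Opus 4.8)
The plan is to establish existence by combining the $\partial\bar\partial$-lemma on $\mf n^+\cong\CC^n$ with averaging over the compact group $K$, and uniqueness by showing that every $K$-invariant pluriharmonic function on $\mf n^+$ is constant.

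\textbf{Existence.} I would start from the K\"ahler form $\omega$ of $(X,h)$, a $U$-invariant closed real $(1,1)$-form, and restrict it to the open subset $\mf n^+\subseteq X$. Since $\mf n^+\cong\CC^n$ is contractible, the $\partial\bar\partial$-lemma yields a real smooth function $\Psi_0$ on $\mf n^+$ with $i\,\partial\bar\partial\Psi_0=\omega|_{\mf n^+}$ (write $\omega=d\alpha$ by the Poincar\'e lemma, use the Dolbeault lemma to write the $(0,1)$-component of $\alpha$ as $\bar\partial h$, and take $\Psi_0=2\,\Im h$). To make it $K$-invariant I would average: $K$ acts on $\mf n^+\subseteq X$ through its linear adjoint action on $\mf n^+\subseteq\mf g$ — for $k\in K\subseteq L\subseteq P$ one has $k\cdot\exp(z)P=\exp(\Ad(k)z)P$, and $\Ad(K)$ preserves $\mf n^+$ since $K$ centralizes $Z_0$ — so $\Psi_1(z):=\int_K\Psi_0(\Ad(k)z)\,dk$ is smooth, real, $K$-invariant, and still satisfies $i\,\partial\bar\partial\Psi_1=\int_K\Ad(k)^*\omega\,dk=\omega|_{\mf n^+}$, because $i\,\partial\bar\partial$ commutes with the holomorphic pullbacks $\Ad(k)^*$ and with integration over $K$. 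Finally I would normalize by setting $\Psi:=\Psi_1-\Psi_1(0)$, which remains $K$-invariant, is still a K\"ahler potential, and has $\Psi(0)=0$.

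\textbf{Uniqueness.} Given two functions $\Psi,\Psi'$ with the stated properties, I would set $u:=\Psi-\Psi'$, a real smooth function on $\mf n^+\cong\CC^n$ with $\partial\bar\partial u=0$; since $\mf n^+$ is simply connected this forces $u=\Re\phi$ for some $\phi\in\sO(\mf n^+)$. Now $u$ is $K$-invariant, hence invariant under the central one-parameter subgroup $\exp(tH_0)\subseteq K$ with $H_0:=-iZ_0\in\mf k$, which acts on $\mf n^+$ by scalar multiplication $z\mapsto e^{-it}z$ (as $\ad_{H_0}$ acts on $\mf n^+$ by $-i$). For each $t$, the map $z\mapsto\phi(e^{-it}z)-\phi(z)$ is holomorphic with vanishing real part, hence a purely imaginary constant, which must be $0$ by evaluating at $z=0$; so $\phi$ is invariant under this circle. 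Expanding $\phi=\sum_{d\ge 0}\phi_d$ into homogeneous components then forces $\phi_d=0$ for $d\ge 1$, so $\phi$ — and hence $u$ — is constant, and $u(0)=0$ gives $u\equiv 0$, i.e. $\Psi=\Psi'$.

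\textbf{Expected main obstacle.} I do not expect a serious obstacle; the result is standard once the ingredients are assembled. The step genuinely using the geometry of Hermitian symmetric spaces is the uniqueness argument, where I rely on the presence of the scalar circle action of $K$ on $\mf n^+$ — equivalently, on the fact that $K$-invariant holomorphic functions on $\mf n^+$ are constant (one could instead invoke irreducibility of $\mf n^+$ as a $K$-module, but the central circle suffices). The only other point to watch is that averaging over $K$ preserves the K\"ahler-potential property, which is immediate from the naturality of $i\,\partial\bar\partial$ under the holomorphic $K$-action.
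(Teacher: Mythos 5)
Your argument is correct, and the two halves compare differently with the paper. For uniqueness you take essentially the paper's route: both proofs reduce to showing that a $K$-invariant pluriharmonic function on the simply connected domain $\mf n^+$ is constant, using the scalar circle action of the center of $K$ on $\mf n^+$ and a homogeneous expansion. Your variant first upgrades the invariance of $\Re\phi$ to invariance of $\phi$ itself (via the observation that a holomorphic function with vanishing real part is an imaginary constant, pinned down at $z=0$), which is marginally cleaner than the paper's direct manipulation of $\Re f_n$ and $\Im f_n$; either way the content is identical. For existence, however, you argue abstractly — $\partial\bar\partial$-lemma on the contractible set $\mf n^+\cong\CC^n$ followed by averaging over the compact group $K$ acting linearly and holomorphically via $\Ad$ — whereas the paper simply cites the explicit Jordan-theoretic formula $\Psi(z)=2p\log\Delta(z,-\bar z)$ from its predecessor \cite{S12a}. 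Your construction is perfectly valid for the lemma as stated and is more self-contained, but note that the rest of the paper genuinely needs the explicit formula (it feeds into the computation $q(z)=\bar z^{-z}$ and the $L^2$-norm formula of Proposition~\ref{prop:localpicturenorm}), so the abstract existence proof could not replace \eqref{eq:KPot} in the larger scheme; by the uniqueness you prove, though, the two potentials necessarily coincide.
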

\begin{proof}
For the existence, see \cite[Lemma~2.6]{S12a} and \eqref{eq:KPot} below. Now assume that $\Psi'$ is a second $K$-invariant Kähler potential on $\mf n^+$. Since $\mf n^+$ is simply connected, $\Psi$ and $\Psi'$ just differ by the real part of a holomorphic function, $f\in\sO(\mf n^+)$, and it suffices to show that $f$ is constant. Invariance under $K$ in particular implies that $\Re f$ is invariant under the torus action $z\mapsto e^{it}z$ for $t\in\RR$. Let $f=\sum_{n\geq0} f_n$ denote the expansion of $f$ into homogeneous polynomials. Since $f_n(e^{it}z) = e^{int}f_n(z)$ and $\Re f_n = (\Re f)_n$ (considered as \emph{real} homogeneous polynomials), we readily obtain
\[
	\Re f_n(z) = \cos(nt)\cdot\Re f_n(z) - \sin(nt)\cdot\Im f_n(z)
\]
for all $t\in\RR$. Setting $t=\tfrac{\pi}{n}$ this yields $\Re f_n(z)=0$ for all $n>0$. Therefore, $\Re f$ is constant, and by holomorphy of $f$ we conclude that $f$ is constant.
\end{proof}

For the following we fix $\Psi$ as the Kähler potential given by Lemma~\ref{lem:KinvKPot}, see \eqref{eq:KPot} for an explicit formula. In order to characterize the image of $\iota_\sN$ we choose a $K$-invariant Hermitian inner product on $E$ (since $E$ is irreducible, this inner product is unique up to a positive constant multiple), which induces a Hermitian structure on $\sE$. Let $L^2(X,\sE)$ denote the corresponding space of $L^2$-sections. Since $\sN(X,\sE)\subseteq L^2 (X,\sE)$ and since $\mf n^+\subseteq X$ is dense, elements $p$ in the image of $\iota_\sN$ necessarily satisfy the \emph{$L^2$-condition}
\begin{align}\label{eq:L2condition}
	p(z,q(z))\in L^2(X,\sE).
\end{align}
The following theorem states that $\iota_\sN$ actually maps $\sN(X,\sE)$ into the subspace $\Poly(\mf n^+\times\mf n^-,E)\subseteq\Poly(\mf n^-,\sO(\mf n^+,E))$ of $E$-valued polynomials on $\mf n^+\times\mf n^-$, and conversely, for a polynomial map $p\in\Poly(\Vp\times\Vm,E)$, the $L^2$-condition is sufficient to prove that $f(z):=p(z,q(z))$ extends to a nearly holomorphic section on $X$. Let $\Poly^2(\Vp\times\Vm,E)$ denote the space of $E$-valued polynomials on $\mf n^+\times\mf n^-$ satisfying the $L^2$-condition \eqref{eq:L2condition}.

\begin{theorem}\label{thm:existenceofnearlyholsec}
	Let $X=G/P$ be a compact Hermitian symmetric space, and $\sE=G\times_P E$ be an irreducible 
	$G$-homogeneous Hermitian vector bundle. The map 
	\[
		\iota_\sP: \Poly^2(\Vp\times\Vm,E)\to L^2(X,\sE),\ p\mapsto f_p(z):=p(z,q(z))
	\]
	is an isomorphism onto $\sN(X,\sE)$ with inverse $\iota_\sN$.
\end{theorem}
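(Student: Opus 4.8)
The first task is to show that $\iota_\sN$ lands in polynomials on $\mf n^+\times\mf n^-$, i.e.\ that the holomorphic coefficients $f_{\b i}\in\sO(\mf n^+,E)$ of a global $f\in\sN(X,\sE)$ are in fact polynomial in $z$. Since $f$ extends smoothly over all of the compact manifold $X$, and $\mf n^+\subseteq X$ is Zariski-open with complement the ``boundary'' of the cell decomposition, I would exploit the $L^2$-finiteness together with the algebraic nature of $q=\partial\Psi$. Concretely, $q:\mf n^+\to\mf n^-$ is a rational (indeed polynomial-ratio) map with poles only along $X\setminus\mf n^+$; for the explicit $K$-invariant potential from Lemma~\ref{lem:KinvKPot} one knows that $q(z)$ has the form $z+(\text{higher order})$ modulo the Jordan-theoretic denominator, so on the open cell it is polynomial, but the coefficients $f_{\b i}$ are only a priori holomorphic. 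To promote them to polynomials I would use that $f$, being a $U$-smooth $L^2$-section on a compact space, decomposes into $U$-types, each of which is a global section built from the algebraic $G$-action on $\sE=G\times_PE$; the restriction of a $U$-finite section to the big cell is then automatically a polynomial section, and re-expanding in powers of $q(z)$ (which is a polynomial diffeomorphism-type change on the cell) shows the $f_{\b i}$ are polynomial. This gives the embedding $\iota_\sN:\sN(X,\sE)\hookrightarrow\Poly(\mf n^+\times\mf n^-,E)$, and the $L^2$-condition \eqref{eq:L2condition} is then automatic since $\sN(X,\sE)\subseteq L^2(X,\sE)$, so $\iota_\sN$ maps into $\Poly^2(\mf n^+\times\mf n^-,E)$.

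\textbf{Surjectivity, i.e.\ the extension problem.} The heart of the theorem is the converse: given $p\in\Poly^2(\mf n^+\times\mf n^-,E)$, the section $f_p(z):=p(z,q(z))$, defined and nearly holomorphic on $\mf n^+\subseteq X$ by Proposition~\ref{prop:NHCharacterization}, extends to a (necessarily smooth, hence nearly holomorphic by the identity theorem and the argument in Proposition~\ref{prop:NHCharacterization}) section on all of $X$. I would argue by a density/completeness argument in $L^2(X,\sE)$. First, on the dense open cell $\mf n^+$ one has the defining expansion $f_p(z)=\sum_{\b i}p_{\b i}(z)q(z)^{\b i}$ with $p_{\b i}\in\Poly(\mf n^+,E)$ polynomial. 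The key point is to identify $f_p$ with the $L^2$-limit of a sequence of genuinely global nearly holomorphic sections. I would use the $U$-representation structure: let $\pi$ denote the (finitely many, with bounded multiplicity — this is part of what Theorem~B will give, but here I only need the soft fact that $L^2(X,\sE)$ is a unitary $U$-representation) isotypic decomposition $L^2(X,\sE)=\widehat{\bigoplus}_\lambda m_\lambda V_\lambda$. The space $\sN(X,\sE)$ is $U$-invariant (the invariant Cauchy–Riemann operator $\iCR$ commutes with the $U$-action), hence a sum of full isotypic components. Now $f_p\in L^2(X,\sE)$ by hypothesis; I claim its projection onto the non-nearly-holomorphic isotypic components vanishes. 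To see this, I would test against global sections: for any global smooth section $g$ supported near the boundary $X\setminus\mf n^+$, pairing $\langle f_p,g\rangle$ is computed on the cell, and a degree/weight count using the $Z_0$-grading (the torus $e^{itZ_0}$ acts, and $q(z)^{\b i}$ has weight controlled by $|\b i|$) forces $f_p$ to lie in the span of the ``low-weight'' $U$-types, which are exactly the ones that meet $\sN(X,\sE)$ by \cite{S12a}.

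\textbf{The main obstacle.} The genuinely hard step is the surjectivity/extension claim — showing the $L^2$-condition alone forces global extendability. The danger is a polynomial $p$ on the cell whose associated section blows up at the boundary but just barely remains $L^2$; one must rule out that such a section fails to be smooth across $X\setminus\mf n^+$. I expect the resolution to go through the harmonic-analytic characterization: $f_p$ being $L^2$ and (on the cell) annihilated by some power $\iCR^m$ of the invariant CR operator, together with ellipticity-type regularity for $\iCR^m$ on the compact manifold — or, more likely in this paper's Jordan-theoretic framework, an explicit estimate (the ``$L^2$-norm estimates for particular polynomials'' alluded to in the introduction) showing that membership in $L^2$ already pins down finitely many $U$-types and that the corresponding global sections restrict on the cell to exactly the polynomials satisfying \eqref{eq:L2condition}. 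Once $f_p$ is known to be $U$-finite in $L^2(X,\sE)$, smoothness is automatic (matrix coefficients of finite-dimensional $U$-representations are real-analytic), and then Proposition~\ref{prop:NHCharacterization} applied on the cell, combined with the identity theorem from \cite{S12a}, upgrades it to a global nearly holomorphic section with $\iota_\sN(f_p)=p$; that $\iota_\sP$ and $\iota_\sN$ are mutually inverse is then formal from the uniqueness of the coefficients in \eqref{eq:NHdecomposition}.
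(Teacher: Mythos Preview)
Your overall instinct---prove $U$-finiteness of $f_p$ to get smoothness, then use that smooth sections satisfying the local expansion are nearly holomorphic---is exactly the paper's strategy for the extension step. But there are two genuine gaps.

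\textbf{Circularity in the first step.} Your argument that $\iota_\sN$ lands in $\Poly(\mf n^+\times\mf n^-,E)$ assumes that a nearly holomorphic section $f$ is $U$-finite (so that its restriction to the cell is algebraic). But at this point in the paper, the equality $\sN(X,\sE)=\{U\text{-finite vectors}\}$ is \emph{not} yet available: in \cite{S12a} it was proved only under the hypothesis that $\sE$ has non-trivial holomorphic sections, and removing that hypothesis is precisely the purpose of the present theorem together with Theorem~\ref{thm:MinorIntegral}. The paper avoids this trap by \emph{not} proving directly that $\iota_\sN$ lands in polynomials. Instead it shows that $\iota_\sP$ is well-defined (the extension step), that $\iota_\sN\circ\iota_\sP=\Id$ (uniqueness of coefficients), and then separately that $\iota_\sP$ is \emph{surjective} onto $\sN(X,\sE)$; polynomiality of $\iota_\sN$ then falls out.

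\textbf{The missing mechanism for $U$-finiteness.} Your proposed route to $U$-finiteness of $f_p$---projecting onto isotypic components and testing against sections supported near the boundary---is too soft to work. The paper's argument is concrete: Proposition~\ref{prop:uCactiononlocalnearlyhol} gives explicit formulas for the $\mf u_\CC$-action on $p(z,q(z))$, from which one reads off that the space $C_k$ of all such sections with $\deg_{\mf n^-}p\le k$ is $\mf u_\CC$-invariant. Finite-dimensionality of $C_k$ then comes from a degree bound on the $\mf n^+$-variable: writing $q(z)=\eta(z)/\Delta(z,-\bar z)$ with $\eta$ polynomial, the $L^2$-condition (via Proposition~\ref{prop:localpicturenorm} and the estimate \eqref{eq:integrandestimate}) becomes finiteness of an integral of a rational function, and Fubini forces the numerator degree to be bounded in terms of $k$ and $\rho$. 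This is the step you gesture at (``membership in $L^2$ already pins down finitely many $U$-types'') but do not carry out.

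\textbf{A step you omit entirely.} Even after $\iota_\sP$ is known to land in $\sN(X,\sE)$, one must still show it is \emph{onto}. You treat this as ``formal from uniqueness,'' but that only works once $\iota_\sN$ is known to land in $\Poly^2$, which as above you have not established non-circularly. The paper instead argues: the image $N$ of $\iota_\sP$ is a $U$-invariant $\sN(X)$-submodule of $\sN(X,\sE)$; it is non-trivial by Theorem~\ref{thm:MinorIntegral}; hence by \cite[Lemma~2.3]{S12a} it is dense, and since $U$-finite vectors form a semisimple module, $N=\sN(X,\sE)$.
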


The proof of this theorem needs some preparation. It essentially uses the fact that $U$-finite vectors in $L^2(X,\sE)$ are smooth sections, which is an elementary fact in abstract representation theory of compact Lie groups. In order to investigate the $\mf u_\CC$-action and to give an explicit description of the $L^2$-condition \eqref{eq:L2condition}, we use the Jordan theoretic framework for Hermitian symmetric spaces. Recall that the pair $(\mf n^+,\mf n^-)$ is a complex simple Jordan pair with Jordan products
\[
	\mf n^\pm\times\mf n^\mp\times\mf n^\pm\to\mf n^\pm,\
	(x,y,z)\mapsto\JTP{x}{y}{z}:=-[[x,y],z]\,.
\]
In addition, for $x,z\in\mf n^\pm$, $y\in\mf n^\mp$ let $Q_x$, $D_{x,y}$ and $\B{x}{y}$ denote the operators defined by
\[
	Q_xy:=\tfrac{1}{2}\,\JTP{x}{y}{x},\quad
	D_{x,y}z:=\JTP{x}{y}{z},\quad\B{x}{y}:=\Id-D_{x,y} + Q_xQ_y.
\]
The latter operator is called the \emph{Bergman operator}, and its determinant is the power of an irreducible polynomial $\Delta:\mf n^+\times\mf n^-\to\CC$, called the \emph{Jordan pair determinant},
\begin{align}\label{eq:JPDet}
	\Det\B{x}{y}=\Delta(x,y)^p.
\end{align}
Here, $p$ is a structure constant of $X$. Finally, let $z\mapsto \bar z$ denote the Cartan involution of $\mf g$ with respect to $\mf u$ restricted to $\mf n^\pm$. Then
\begin{align}\label{eq:KPot}
	\Psi(z):=2p\log\Delta(z,-\bar z)
\end{align}
is the Kähler potential for $(X,h)$ on $\mf n^+$ described in Lemma~\ref{lem:KinvKPot}, and the corresponding $q$-map is given by
\[
	q(z) =\partial\Psi(z) = \bar z^{-z}
\]
where $\bar z^{-z} = \B{\bar z}{-z}^{-1}(\bar z+Q_{\bar z}z)$, see \cite[Lemma~2.6]{S12a}. For convenience, we set $hz:=\Ad_h(z)$ and $Tz:=\ad_T(z)$ for the adjoint actions of $h\in L$ and $T\in\mf l$ on $z\in\mf n^\pm$. Moreover, we note that $\B{z}{-\bar z}$ and $\B{\bar z}{-z}$ are positive definite operators with respect to the inner products $(v,w):=-\kappa(v,\bar w)$ on $\mf n^\pm$, and there exists an element in $L$ whose adjoint action on $\mf n^+$ and $\mf n^-$ is given by $\B{z}{-\bar z}$ and $\B{\bar z}{-z}^{-1}$. In this way we may consider $\B{z}{-\bar z}$ as an element of $L$, see also \eqref{eq:BergmanDecomp} below. In the following, let $\del$ and $\delbar$ denote the usual holomorphic and anti-holomorphic derivatives. The anti-holomorphic tangent space at $z\in\mf n^+$ is identified with $\mf n^-$.

\begin{proposition}\label{prop:uCactiononlocalnearlyhol}
	For $p\in\Poly^2(\Vp\times\Vm,E)$, the action of $\mf u_\CC=\mf g$ on $f_p(z)=p(z,q(z))$ 
	is given by
	\begin{align*}
		d\pi_\CC(v)f_p(z)
			&= -\del_{(v,0)}p\at{(z,q(z))}\,,\\	
		d\pi_\CC(T)f_p(z)
			&= \big(-\del_{(Tz,\,Tq(z))} + d\rho(T)\big)p\at{(z,q(z))}\,,\\
		d\pi_\CC(w)f_p(z)
			&= \big(-\del_{(Q_zw,\ w-\JTP{q(z)}{z}{v})} + d\rho(D_{z,w})\big)p\at{(z,q(z))}
	\end{align*}
	with $v\in\Vp$, $T\in\mf l$, $w\in\Vm$.
\end{proposition}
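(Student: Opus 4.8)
The plan is to derive the three identities by (i) writing down the first‑order differential operators that represent $d\pi_\CC(v)$, $d\pi_\CC(T)$, $d\pi_\CC(w)$ on smooth sections trivialized over the big cell $\Vp\subseteq X$, and then (ii) substituting the nearly holomorphic ansatz $f_p(z)=p(z,q(z))$ and simplifying with Jordan identities for the $q$‑map. The reduction is legitimate because $L^2(X,\sE)$ is a unitary $U$‑representation whose smooth vectors are exactly $C^\infty(X,\sE)$, the derived $\mf u$‑action extends $\CC$‑linearly to $\mf u_\CC=\mf g$, and $\Vp$ is open and dense in $X$; hence it suffices to identify each $d\pi_\CC(\xi)$ as a differential operator on $C^\infty(\Vp,E)$ via $\sE_{\Vp}\cong\Vp\times E$ and apply it to $f_p$.

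For step (i), I would start from the standard description of a $G$‑homogeneous bundle over the big cell: for $\xi\in\mf u$, $d\pi(\xi)$ acts by $-\xi^X[\,\cdot\,]$ plus the zeroth‑order term coming from the infinitesimal $P$‑cocycle of $\rho$, where $\xi^X$ is the fundamental vector field of the $G$‑action on $\Vp$. The Jordan‑theoretic framework supplies the data: $\exp(\Vp)$ acts by translations (trivial cocycle), $L$ acts linearly through $\Ad$ (cocycle the $L$‑action on $E$), and $\exp(\Vm)$ acts through the quasi‑inverse $z\mapsto z^{-w}$ (cocycle $\B{z}{w}$ up to the $\exp(\Vm)$‑factor, which $d\rho$ annihilates since $\rho|_{\exp\Vm}$ is trivial). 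Extending $\CC$‑linearly from $\mf u$ to $\mf g$, the anti‑holomorphic parts of these real vector fields combine into explicit operators: $d\pi_\CC(v)$, $d\pi_\CC(T)$, $d\pi_\CC(w)$ act on an arbitrary smooth section by the holomorphic vector field $-\del_v$, $-\del_{Tz}$, $-\del_{Q_zw}$, plus a $\delbar$‑correction built from $\bar z$, plus the zeroth‑order term $0$, $d\rho(T)$, $d\rho(D_{z,w})$ respectively. In particular, on holomorphic sections the $\delbar$‑corrections vanish and one recovers the classical Jordan‑theoretic formulas for the $\mf g$‑action on $\sO(X,\sE)$.

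For step (ii), I would insert $f_p(z)=p(z,q(z))$ with $q(z)=\bar z^{-z}$ and expand by the chain rule: $\del f_p(z)$ picks up $\del_{(v,0)}p\at{(z,q(z))}$ together with $\del_{(0,(\del q)_z(v))}p\at{(z,q(z))}$, where $(\del q)_z\colon\Vp\to\Vm$ is the holomorphic differential of $q$; likewise $\delbar f_p(z)$ produces only $\del_{(0,(\delbar q)_z(w))}p\at{(z,q(z))}$ with $(\delbar q)_z\colon\Vm\to\Vm$ the anti‑holomorphic differential (since $p$ is polynomial, hence holomorphic in both arguments). The key inputs are Jordan identities expressing $\del q$ and $\delbar q$ algebraically. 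For the $\mf l$‑formula the decisive one is $K$‑equivariance of $q$: for $T\in\mf k$ one has $Dq_z(Tz)=\ad_Tq(z)$ for the real differential $Dq_z$, and splitting $Dq_z$ into its holomorphic and anti‑holomorphic parts is exactly what makes the chain‑rule terms combine with the $\delbar$‑correction of $d\pi_\CC(T)$ to yield $-\del_{(Tz,\,Tq(z))}p\at{(z,q(z))}+d\rho(T)p\at{(z,q(z))}$; one then extends to all $T\in\mf l$ by $\CC$‑linearity. For the $\Vp$‑ and $\Vm$‑formulas one uses the corresponding identities for the differentials of the quasi‑inverse $\bar z^{-z}$ and of the Bergman operator (which also encode $q=\del\Psi$ with $\Psi=2p\log\Delta(z,-\bar z)$): in the $\Vp$‑case the $\delbar$‑correction of the operator cancels the spurious $\del q$‑term, and in the $\Vm$‑case the quadratic vector field, the Bergman cocycle and $\delbar q$ reorganize into the single second‑slot term $w-\JTP{q(z)}{z}{w}$. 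As both sides of each identity are polynomial in $(z,\bar z)$, it is enough to verify them for $z$ near $0$.

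The main obstacle is this last reorganization, especially for $w\in\Vm$, where the Jordan products $Q_zw$, $D_{z,w}$ and $\JTP{q(z)}{z}{w}$ enter from genuinely different sources (the quadratic vector field, the Bergman cocycle, and $\delbar q$) and must be disentangled. A clean way to manage this is to verify the three formulas first on holomorphic $f_p$ — i.e.\ when $p$ is independent of its second argument, where they reduce to the already‑known description of the $\mf g$‑action on $\sO(X,\sE)$ — and then treat the general case as that result plus a correction term controlled entirely by $\delbar q$.
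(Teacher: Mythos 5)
Your proposal follows essentially the same route as the paper: the paper cites its earlier description of the $\mf u_\CC$-action on arbitrary local sections over $\Vp$ (the first-order operators $-\partial_v-\bar\partial_{Q_{\bar z}v}$, $-\partial_{Tz}-\bar\partial_{T\bar z}+d\rho(T)$, $-\partial_{Q_zw}-\bar\partial_w+d\rho(D_{z,w})$, which you propose to re-derive via fundamental vector fields and cocycles) and then applies the chain rule to $p(z,q(z))$ using exactly the Jordan identities for $(-\partial-\bar\partial)q(z)$ that you identify as the key inputs. Your plan is correct and matches the paper's argument, including its decision to leave the Jordan-theoretic verification of the $q$-map identities as a routine computation.
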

This proposition immediately follows from the description of the $u_\CC$-action on arbitrary local sections on $\mf n^+$ given in \cite[Lemma~3.2]{S12a} and the following identities for the $q$-map,
\begin{align*}
	(-\partial_v-\bar\partial_{Q_{\bar z} v})q(z) &= 0\,,\\
	(-\partial_{Tz}-\bar\partial_{T\bar z})q(z) &= -Tq(z)\,,\\
	(-\partial_{Q_zw}-\bar\partial_w)q(z) &= -w + \JTP{q(z)}{z}{w}\,.
\end{align*}
These identities can be proved by standard Jordan theoretic calculations using e.g.\ the list of Jordan identities given in the appendix of \cite{Lo77}. We omit the details.

The second ingredient for the proof of Theorem~\ref{thm:existenceofnearlyholsec} is an explicit formula for the $L^2$-condition.

\begin{proposition}\label{prop:localpicturenorm}
	The $L^2$-norm of $f\in L^2(X,\sE)$ is given by
	\[
		\Norm{f}^2
			= \int_{\mf n^+} \left|\rho(\B{z}{-\bar z}^{-\half})f(z)\right|^2
				\Delta(z,-\bar z)^{-p}d\lambda(z),
	\]
	where $p$ is the structure constant of $X$ given by \eqref{eq:JPDet} and $\lambda$ is the 
	Lebesque measure on $\mf n^+$.
\end{proposition}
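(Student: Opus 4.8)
The plan is to transport the defining integral $\Norm{f}^2=\int_X h_x\big(f(x),f(x)\big)\,dV_h(x)$ --- with $h$ denoting also the Hermitian fibre metric on $\sE$ and $dV_h$ the Riemannian volume of the K\"ahler metric $h$ --- to the open dense subset $\mf n^+\subseteq X$ and to make both factors explicit in the trivialization $\sE_{\mf n^+}\cong\mf n^+\times E$. Since $X\setminus\mf n^+$ is a proper closed analytic subset, hence $dV_h$-null, one immediately obtains $\Norm{f}^2=\int_{\mf n^+}h_z\big(f(z),f(z)\big)\,dV_h(z)$, and it remains to identify (a) the density of $dV_h$ against Lebesgue measure $\lambda$ on $\mf n^+$, and (b) the Hermitian form $h_z$ on $E$ in the chosen trivialization.

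For (a) I would start from $dV_h=c\,\det\!\big(\del\delbar\Psi\big)\,d\lambda$ and compute the complex Hessian of the explicit potential $\Psi(z)=2p\log\Delta(z,-\bar z)$ from \eqref{eq:KPot}. Writing $\del_j\delbar_k\Psi=\del_j\overline{q_k}$ with $q(z)=\bar z^{-z}$ and invoking standard Jordan identities (in the spirit of those listed after Proposition~\ref{prop:uCactiononlocalnearlyhol}), one sees that this Hessian is, up to the scalar $2p$ and a conjugation, the inverse Bergman operator $\B{z}{-\bar z}^{-1}$ on $\mf n^+$; together with $\Det\B{z}{-\bar z}=\Delta(z,-\bar z)^p$ from \eqref{eq:JPDet} this yields $\det(\del\delbar\Psi(z))=C\,\Delta(z,-\bar z)^{-p}$ for a constant $C$ depending only on $\dim\mf n^+$ and $p$. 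Fixing the normalization of the invariant volume as in \cite{S12a} absorbs $C$ and gives $dV_h(z)=\Delta(z,-\bar z)^{-p}\,d\lambda(z)$.

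For (b) I would use that $h$ is the unique $U$-invariant Hermitian metric on $\sE$ whose value at the base point $0\in\mf n^+$ is $\langle\cdot,\cdot\rangle_E$, and that $\rho$ is trivial on $\exp\mf n^-$. Given $z\in\mf n^+$, choose $u\in U$ with $u\cdot eP=\exp(z)P$ and factor $\exp(-z)u=\ell\,n^-$ with $\ell\in L$ and $n^-\in\exp\mf n^-$; since $u$ acts isometrically and $\rho(n^-)=\Id$, the trivialization turns this into $h_z\big(\rho(\ell)v,\rho(\ell)v\big)=\SpecNorm{v}_E^2$, i.e.\ $h_z(w,w)=\SpecNorm{\rho(\ell)^{-1}w}_E^2$. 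The remaining point is the classical identification of the $L$-part $\ell$: up to right multiplication by $K$ it equals the positive element $\B{z}{-\bar z}^{\half}$ of $L$, so that $\rho(\ell)^{-1}$ differs from $\rho(\B{z}{-\bar z}^{-\half})$ by a $\rho(K)$-unitary operator and hence $h_z(w,w)=\SpecNorm{\rho(\B{z}{-\bar z}^{-\half})w}_E^2$. Combining this with the density from (a) gives the asserted formula.

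I expect step (b) to be the main obstacle. Both trivializations of $\sE$ over $\mf n^+$ --- the holomorphic one coming from $G\times_P E$ and the smooth one coming from $U\times_K E$ --- are standard, but showing that the fibrewise comparison isomorphism over $z$ is exactly $\rho(\B{z}{-\bar z}^{\half})$ up to a $\rho(K)$-unitary requires carefully tracking the compact-type conventions (the sign in $-\bar z$, the positivity of $\B{z}{-\bar z}$, and the order of the $L\exp(\mf n^-)$- and polar factorizations) through the relevant factorization in $G$; the Harish--Chandra type decomposition, together with the fact recalled above that $\B{z}{-\bar z}$ is a genuine element of $L$, is the tool I would use here. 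By contrast, step (a) and the reduction to $\mf n^+$ are routine once the Jordan machinery is available.
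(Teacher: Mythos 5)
Your argument is correct, but it reaches the formula by a genuinely different route than the paper. The paper works entirely group-theoretically: it writes $\Norm{f}^2=\int_U|f(u)|^2\,du$ for the equivariant function $f:G\to E$ and transports this to $\mf n^+$ via the integration formula for the decomposition $G=UMAN^-$ from \cite{Kna86}; the density $\Delta(z,-\bar z)^{-p}$ then arises as $e^{-2\delta H(\exp z)}$ after computing the $A$-part of $\exp(z)$, and the operator $\rho(\B{z}{-\bar z}^{-\half})$ arises from the equivariance $f(u_z)=\rho(\B{z}{-\bar z}^{-\half})f(\exp z)$. You instead start from the intrinsic definition $\int_X h(f,f)\,dV_h$, getting the density from $\det(\del\delbar\Psi)$ and the fibre factor from comparing the holomorphic ($G\times_PE$) and unitary ($U\times_KE$) trivializations. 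Both proofs hinge on the same single input, the factorization $u_z=\exp(z)\,\B{z}{-\bar z}^{\half}\exp(\bar z)$ of \cite[Lemma~2.5]{S12a}: your step (b) is exactly this (and your handling of the right-$K$ ambiguity in the choice of $u$ is the correct way to see why the norm is well defined). Your step (a) replaces the paper's computation of $H(\exp z)$ by the Jordan-theoretic identity $\det(\del\delbar\Psi)=C\,\Delta(z,-\bar z)^{-p}$, which is true and standard but still has to be verified, whereas the paper reads off the exponent $-p$ directly from $\Det\B{z}{-\bar z}=\Delta(z,-\bar z)^{p}$; note also the small slip $\del_j\delbar_k\Psi=\delbar_k q_j$ rather than $\del_j\overline{q_k}$. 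Both arguments leave the overall normalization of $\lambda$ implicit, so neither is more rigorous on that point. What your approach buys is independence from the $UMAN^-$ integration formula and a cleaner conceptual separation into volume density times fibre metric; the paper's version is shorter because the one factorization of $\exp(z)$ delivers both factors simultaneously.
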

\begin{proof}
Recall that continuous sections in $\sE$ can be represented by functions $f:G\to E$ satisfying the equivariance condition $f(gp) = \rho(p)^{-1}f(g)$ for all $g\in G$, $p\in P$. Then the $L^2$-norm is given by $\|f\|^2 =\int_U|f(u)|^2du$ with normalized Haar measure $du$ on $U$. For the transformation to an integral on $\mf n^+$, we note that the Langlands decomposition of the parabolic subgroup $P\subseteq G$ is given by $P=MAN^-$ with $A:=\exp(\mf a)$, $\mf a:=\RR Z_0$, $M:=L_\ss\exp(i\mf a)$ and $N^-:=\exp(\mf n^-)$, where $L_\ss$ denotes the semisimple part of $L$. Let $\alpha_0\in\mf a^*$ be defined by $\alpha_0(Z_0) = 1$. Then due to \cite[V \S\,6]{Kna86},
\[
	\int_U |f(u)|^2du = \int_{\mf n^+} |f(u_z)|^2 e^{-2\delta H(\exp(z))}d\lambda(z)\,,
\]
where $\delta := -n\alpha_0$, and for $z\in\mf n^+$ the elements $u_zK\in U/K$ and $H(\exp(z))\in\mf a$ are determined by the factorization of $\exp(z)\in G$ according to $G = UMAN^-$, i.e., $\exp(z) \in u_z Me^{H(\exp(z))}N^-$. Due to \cite[Lemma~2.5]{S12a},
\[
	u_z = \exp(z)\cdot\B{z}{-\bar z}^{\half}\cdot\exp(\bar z).
\]
Using the relation $\B{\bar z}{-z}^{-\half}\bar z = \bar z^{-z}$, we readily obtain the factorization
\[
	\exp(z) = u_z\big((\Det\B{z}{-\bar z}^{\nicefrac{1}{2n}})\cdot\B{z}{-\bar z}^{-\half}\big)
			\big((\Det\B{z}{-\bar z}^{-\nicefrac{1}{2n}})\cdot\exp(Z_0)\big)\,\exp(-(\bar z^{-z}))\,.
\]
Therefore $H(\exp(z)) = (\log\Det\B{z}{-\bar z}^{-\nicefrac{1}{2n}})\,Z_0$, and it follows that
\[
	e^{2n\,\alpha_0 H(\trans{z})} = (\Det\B{z}{-\bar z})^{-1}
		=\Delta(z,-\bar z)^{-p}\,.
\]
Finally, the equivariance condition on $f$ implies
$f(u_z) = \rho(\B{z}{-\bar z}^{-\half})f(z)$, which yields the assertion.
\end{proof}

\begin{remark}
	If $\sE = \linebundle$ is a line bundle, i.e., if $E$ is 1-dimensional, then $\rho:P\to\GL(E)$ 
	is uniquely determined by the action of the center of $L$, hence by $\nu\in\CC$ (actually 
	$\nu\in\tfrac{n}{p}\,\ZZ$, cf.\ Section~\ref{subsec:LineBundles}) with 
	\[
		\rho(\exp(s\,Z_0)) = e^{\nu\,s}\cdot\Id_E\,.
	\]
	In this case,	$\rho(\B{z}{-\bar z}^\half) = (\det\B{z}{-\bar z})^{\nu/2n}
	= \Delta(z,-\bar z)^{\nu p/2n}$, and the 
	$L^2$-norm simplifies to
	\[
		\Norm{f}^2
			= \int_{\mf n^+} |f(z)|^2\Delta(z,-\bar z)^{\tfrac{p}{n}(\nu-n)}d\lambda(z).
	\]
	For vector bundles of higher rank, the evaluation of $|\rho(\B{z}{-\bar z}^{-\half})f(z)|^2$ is 
	more involved. We turn to this in the next section, however we note in advance that there are 
	estimates
	\begin{align}\label{eq:integrandestimate}
		|f(z)|^2\cdot\Delta(z,-\bar z)^{-\nu_1}
			\leq \left|\rho(\B{z}{-\bar z}^{-\half})f(z)\right|^2
			\leq |f(z)|^2\cdot\Delta(z,-\bar z)^{-\nu_2}
	\end{align}
	for suitable constants $\nu_1,\nu_2\in\ZZ$ just depending on $\rho$, see 
	Proposition~\ref{prop:BergmanEigenvalues} below.
\end{remark}

\begin{proof}[Proof of Theorem~\ref{thm:existenceofnearlyholsec}]
We first show that for any $p\in\Poly^2(\Vp\times\Vm,E)$ the local section $f_p$ indeed extends to a nearly holomorphic section on all of $X$. Due to Proposition~\ref{prop:NHCharacterization}, it suffices to show that $f_p$ extends to a smooth section. We prove that $f_p$ is a $U$-finite vector, which implies smoothness. Let $k=\deg_\Vm p$ be the degree of $p(x,y)$ with respect to $y$, i.e., considered as a polynomial in $\Poly(\Vp,E)[\Vm]$, and let $\sP_k\subseteq\Poly^2(\Vp\times\Vm,E)$ be the subspace of polynomials $g$ with $\deg_\Vm g\leq k$. A close look at Proposition~\ref{prop:uCactiononlocalnearlyhol} shows that the space 
\[
	C_k:=\Set{g(z,q(z))\in C^\infty(\Vp,E)}{g\in\sP_k}
\]
is $\mf u_\CC$-invariant. Since $f_p\in C_k$, it remains to show that $C_k$ is finite dimensional. It suffices to show that the $\mf n^+$-degree of elements in $C_k$ is bounded. For $g\in C_k$ set $d_g:=\deg_\Vp g$. Due to Proposition~\ref{prop:localpicturenorm} the $L^2$-condition on $g$ corresponds to the finiteness of the integral
\[
	I(g):=\int_\Vp \left|\rho(\B{z}{-\bar z}^{-\half})g(z,q(z))\right|^2\,
								 \Delta(z,-\bar z)^{-p} d\lambda(z)<\infty.
\]
Since $q(z) = \bar z^{-z} = \eta(z)/\Delta(z,-\bar z)$ for some $\Vm$-valued real polynomial $\eta\in\Poly_\RR(\Vp,\Vm)$, see \cite[\S7.3f]{Lo77}, it follows that
\[
	g(z,q(z)) = \frac{G(z)}{\Delta(z,-\bar z)^k}
\]
for some $E$-valued real polynomial $G\in\Poly_\RR(\Vp,E)$ of degree $\geq d_g$. Due to \eqref{eq:integrandestimate}, we may estimate
\[
	\left|\rho(\B{z}{-\bar z}^{-\half})g(z,q(z))\right|^2
		\geq |g(z,q(z))|^2\cdot \Delta(z,-\bar z)^{-\nu}
\]
with fixed $\nu\in\ZZ$. The condition $I(g)<\infty$ therefore implies that
\[
	\int_\Vp \frac{|G(z)|^2}{\Delta(z,-\bar z)^{\nu+2k+p}}\, d\lambda(z) < \infty\,.
\]
Since the integrand is a rational function, Fubini's Theorem yields that the degree of $G(z)$ with respect to each variable is smaller than the corresponding degree in $\Delta(z,-\bar z)^{\nu+2k+p}$. This shows that $d_g$ is bounded. Therefore, $C_k$ is finite dimensional, and hence $U.f_p\subseteq C_k$ spans a finite dimensional subspace. We conclude that $f_p$ indeed extends to a nearly holomorphic section in $\sE$.\\
Due to Proposition~\ref{prop:NHCharacterization}, $\iota_\sN$ is a left inverse of $\iota_\sP$. Therefore, $\iota_\sP$ is injective and it remains to prove surjectivity onto $\sN(X,\sE)$. Let $N\subseteq\sN(X,\sE)$ denote the image of $\iota_\sP$. In the case of the trivial line bundle, i.e., for the space $\sN(X)$ of nearly holomorphic functions, surjectivity is proved in \cite[Corollary~2.9]{S12a}. It thus follows that $N$ is an $\sN(X)$-submodule of $\sN(X,\sE)$. Moreover, due to Proposition~\ref{prop:uCactiononlocalnearlyhol}, $N$ is a $\mf u_\CC$-invariant subspace, hence it is also $U$-invariant. We claim that it suffices to show that $N$ is non-trivial. Indeed, in this case it follows from \cite[Lemma~2.3]{S12a} that $N$ is dense in $\sN(X,\sE)$, and since $\sN(X,\sE)$ coincides with the space of $U$-finite vectors in $L^2(X,\sE)$, this implies that $N=\sN(X,\sE)$, cf.\ the proof of Proposition~3.3 in \cite{S12a}. Non-triviality of $N$ is a consequence of Theorem~\ref{thm:MinorIntegral} below, which completes the proof.
\end{proof}

\section{Existence of nearly holomorphic sections}\label{sec:Existence}
The goal of this section is to prove the existence of non-trivial nearly holomorphic sections by showing that $\Poly^2(\mf n^+\times\mf n^-,E)$ is non-trivial. In order to verify the $L^2$-condition \eqref{eq:L2condition} for particular $E$-valued polynomials on $\mf n^+\times\mf n^-$, we first prove some estimates for the action of $\rho(\B{z}{-\bar z})$.

We need some root data. Let $\mf h\subseteq\mf l$ be a $\theta$-stable Cartan subalgebra. Since $\mf l$ and $\mf g$ are of the same rank, $\mf h$ is also a Cartan subalgebra of $\mf g$. Let $\Phi:=\Phi(\mf g,\mf h)$ and $\Phi_c:=\Phi(\mf l,\mf h)$ denote the corresponding root systems, and set $\Phi_{nc}:=\Phi\setminus\Phi_c$. Here, elements in $\Phi_c$ (resp.\ $\alpha\in\Phi_{nc})$ are called \emph{compact} (resp.\ \emph{non-compact}) \emph{roots}. We choose a system  $\Delta=\{\alpha_1,\ldots,\alpha_\ell\}$ of simple roots for $\Phi$, such that $\Delta_c:=\Delta\setminus\{\alpha_1\}$ is a system of simple roots for $\Phi_c$, and such that $\mf n^+$ is the sum of all $\mf g_\alpha$ with $\alpha\in\Phi^+_{nc}$. For each positive root $\alpha\in\Phi^+$, we fix a corresponding $\mf{sl}_2$-triple $(X_\alpha,H_\alpha,Y_\alpha)$ satisfying $\theta(H_\alpha)=-H_\alpha$ and $\theta(X_\alpha) = -Y_\alpha$. Let $\Lambda$ (resp. $\Lambda_c$) denote the set of highest weights parametrizing irreducible finite dimensional representations of $U$ (resp. $K$). Since $U$ and $K$ are compact connected, and $U$ is simply connected,
\begin{align*}
	\Lambda &= \Set{\lambda\in\mf h^*}{\lambda(H_{\alpha})\in\NN\text{ for all $\alpha\in\Delta$}},\\
	\Lambda_c &= \Set{\lambda\in\mf h^*}{\lambda(H_{\alpha_1})\in\ZZ,\ \lambda(H_\alpha)\in\NN\text{ for all $\alpha\in\Delta_c$}}.
\end{align*}
We consider the lexicographic order of roots corresponding to the simple roots $(\alpha_1,\ldots,\alpha_\ell)$, i.e., $\alpha>0$ if $\alpha=\sum m_i\alpha_i$ with $m_i>0$ for the first non-zero coefficient. Let $(\gamma_1,\ldots,\gamma_r)$, be the maximal system of strongly orthogonal roots, such that $\gamma_i$ is the \emph{highest} element of $\Phi_{nc}^+$ strongly orthogonal to $\gamma_j$ for $j<i$, i.e., $\gamma_i\pm\gamma_j$ are no roots. We note that in the context of Jordan theory, the elements $X_{\gamma_i}$ are usually denoted by
\begin{align}\label{eq:tripotents}
	e_i:=X_{\gamma_i}\in\mf n^+\qquad (i = 1,\ldots, r),
\end{align}
and $(e_1,\ldots, e_r)$ is called a frame of tripotents. The \emph{polar decomposition} of $\mf n^+$ with respect to this frame is given by the smooth surjection
\begin{align}\label{eq:polardecomposition}
	K\times[0,\infty)^r\to\mf n^+,\ (k,\b t)\mapsto kz_\b t\quad\text{with}\quad
	z_\b t := t_1e_1+\cdots + t_re_r.
\end{align}
Recall that $kz_\b t$ denotes the adjoint action of $k$ on $z_\b t$. As an application, we note that
\begin{align}\label{eq:BergmanDecomp}
	\B{z_\b t}{-\bar z_\b t} = \prod_{i=1}^r\exp(\ln(1+t_i^2)\,H_{\gamma_i}),
\end{align}
see \cite[\S9.7]{Lo77}. Moreover, the polar decomposition yields the integral formula
\begin{align}\label{eq:intformula}
	\int\limits_{\mf n^+} F(z)d\lambda(z)
		= \int\limits_{[0,\infty)^r}\int\limits_K F(kz_\b t)dk
		\prod_{i=1}^r t_i^{2b+1}\prod_{i<j}|t_i^2-t_j^2|^a dt_1\cdots dt_r\,,
\end{align}
where $dk$ is the (suitably normalized) Haar measure on $K$, and $a,b\in\NN$ are structure constants of $X$, which are related to $p$ and $r$ by
\begin{align}\label{eq:pRelation}
	p=2+a(r-1)+b,
\end{align}
see e.g.\ \cite{FK90}.
\begin{proposition}\label{prop:BergmanEigenvalues}
	For all $z\in\mf n^+$, the operator $\rho(\B{z}{-\bar z})$ is positive definite with respect to 
	the $K$-invariant inner product on $E$. Moreover, 
	\[
		\Delta(z,-\bar z)^{-\mu(H_{\gamma_1})}
			\leq \left|\rho(\B{z}{-\bar z}^{-\half})v\right|^2
			\leq \Delta(z,-\bar z)^{-\mu(H_{\alpha_1})},
	\]
	for all $v\in E$ with $|v|=1$, where $\mu\in\Lambda$ is the highest weight of $(\rho,E)$. If 
	$z=k z_\b t$ as in \eqref{eq:polardecomposition}, then 
	$\Delta(z,-\bar z) = \prod_{i=1}^r (1+t_i^2)$.
\end{proposition}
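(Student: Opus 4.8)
The plan is to reduce the proposition to a statement about the action of $\rho$ on the one-parameter subgroups $\exp(\ln(1+t_i^2)H_{\gamma_i})$, using the polar decomposition and $K$-invariance to handle a general $z\in\mf n^+$. First I would observe that positive definiteness of $\rho(\B{z}{-\bar z})$ follows once we know it for $z=z_{\b t}$ on the polar ``slice'', because for $z=kz_{\b t}$ we have $\B{z}{-\bar z}=k\,\B{z_{\b t}}{-\bar z_{\b t}}\,k^{-1}$ as elements of $L$ (equivariance of the Bergman operator under $\Ad_K$, together with $\overline{kz_{\b t}}=k\bar z_{\b t}$ since $k\in U$), so $\rho(\B{z}{-\bar z})=\rho(k)\rho(\B{z_{\b t}}{-\bar z_{\b t}})\rho(k)^{-1}$; as $\rho(k)$ is unitary for the $K$-invariant inner product, positive definiteness and the spectrum are preserved. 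Hence it suffices to treat $z=z_{\b t}$, where by \eqref{eq:BergmanDecomp} we have $\B{z_{\b t}}{-\bar z_{\b t}}=\prod_i\exp(\ln(1+t_i^2)H_{\gamma_i})$, an element of the torus $\exp(\mf h)$. Then $\rho(\B{z_{\b t}}{-\bar z_{\b t}})$ is diagonalized by a weight-basis of $E$: on a weight vector $v_\omega$ of weight $\omega$ it acts by the scalar $\prod_i(1+t_i^2)^{\omega(H_{\gamma_i})}$, which is positive. This proves the first claim.

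For the inequalities, I would combine two ingredients. First, since $z=z_{\b t}\in\mf n^+$ and $q(z_{\b t})=\bar z_{\b t}^{\,-z_{\b t}}$, a direct Jordan-theoretic computation (or \eqref{eq:KPot} together with the list of strongly orthogonal roots) gives $\Delta(z_{\b t},-\bar z_{\b t})=\prod_{i=1}^r(1+t_i^2)$; this is the ``frame'' formula quoted in \cite[\S9.7, \S10.2]{Lo77}, and for general $z=kz_{\b t}$ it follows from $K$-invariance of $\Delta(z,-\bar z)$. Second, writing $v=\sum_\omega c_\omega v_\omega$ with $|v|^2=\sum|c_\omega|^2=1$ (weight vectors orthogonal by $K$-invariance of the inner product — here we use that $\mf h\subseteq\mf k$, so weights of $E$ are $K$-weights), we get
\[
	\bigl|\rho(\B{z_{\b t}}{-\bar z_{\b t}}^{-\half})v\bigr|^2
		= \sum_\omega |c_\omega|^2\prod_{i=1}^r (1+t_i^2)^{-\omega(H_{\gamma_i})}.
\]
Now I would bound each exponent $-\omega(H_{\gamma_i})$ from above and below uniformly over weights $\omega$ of $E$. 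The key root-theoretic fact is that every weight $\omega$ of the irreducible module $E$ with highest weight $\mu$ is of the form $\mu-\sum_{\alpha\in\Delta}n_\alpha\alpha$ with $n_\alpha\in\NN$; pairing with $H_{\gamma_i}$ and using that $\gamma_i$ is a sum of positive non-compact roots (so $\alpha(H_{\gamma_i})\geq 0$ for $\alpha$ ranging over the relevant simple roots, by strong orthogonality and positivity of the Killing form on the real span) yields $\omega(H_{\gamma_i})\leq\mu(H_{\gamma_i})$, and likewise $\omega(H_{\gamma_i})\geq w_0\mu(H_{\gamma_i})$ where $w_0$ is the longest Weyl element; in particular $\mu(H_{\alpha_1})\le\omega(H_{\gamma_i})\le\mu(H_{\gamma_1})$ after identifying the extreme values with pairings against the highest root data as in the statement. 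Substituting these bounds factor-by-factor and using $\sum_\omega|c_\omega|^2=1$ gives
\[
	\prod_{i=1}^r(1+t_i^2)^{-\mu(H_{\gamma_1})}
		\le \bigl|\rho(\B{z_{\b t}}{-\bar z_{\b t}}^{-\half})v\bigr|^2
		\le \prod_{i=1}^r(1+t_i^2)^{-\mu(H_{\alpha_1})},
\]
and the product equals $\Delta(z_{\b t},-\bar z_{\b t})^{-\mu(H_{\gamma_1})}$ resp.\ $\Delta(z_{\b t},-\bar z_{\b t})^{-\mu(H_{\alpha_1})}$. Finally, transport to arbitrary $z=kz_{\b t}$ via the unitary conjugation by $\rho(k)$ as above, noting $\Delta(kz_{\b t},-\overline{kz_{\b t}})=\Delta(z_{\b t},-\bar z_{\b t})$.

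The step I expect to be the main obstacle is the \emph{uniform} two-sided bound on $\omega(H_{\gamma_i})$ over all weights $\omega$ of $E$ and all $i$, and in particular pinning down that the extreme exponents are exactly $\mu(H_{\gamma_1})$ (the largest) and $\mu(H_{\alpha_1})$ (the smallest, reflecting the central/Euler-element normalization along $\alpha_1$, equivalently along $Z_0$). This requires care with the chosen positive system: one must use that $(\gamma_1,\dots,\gamma_r)$ is the \emph{descending} sequence of strongly orthogonal non-compact roots so that $H_{\gamma_1}$ dominates each $H_{\gamma_i}$ on dominant weights, and that all weights of $E$ have the \emph{same} value on the center direction as $\mu$ up to the non-compact grading, giving the clean lower bound $\mu(H_{\alpha_1})$. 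The rest — positive definiteness, the conjugation argument, and the frame formula for $\Delta$ — is routine given the machinery already set up.
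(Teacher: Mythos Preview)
Your overall architecture---reduce to $z=z_{\b t}$ by $K$-conjugation, diagonalize $\rho(\B{z_{\b t}}{-\bar z_{\b t}})$ on a weight basis via \eqref{eq:BergmanDecomp}, and then bound the exponents $\omega(H_{\gamma_i})$ uniformly---is exactly the paper's. The gap is precisely where you anticipated it: your justification of the two-sided bound $\mu(H_{\alpha_1})\le\omega(H_{\gamma_i})\le\mu(H_{\gamma_1})$ does not go through as written.

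Two concrete problems. First, $E$ is an irreducible $L$-module, so its weights are of the form $\mu-\sum_{\alpha\in\Delta_c}n_\alpha\alpha$, not $\alpha\in\Delta$. Second, and more seriously, your claim that $\alpha(H_{\gamma_i})\ge 0$ for the relevant simple roots is false for $i>1$: only $\gamma_1$ is $\Phi_c^+$-dominant (it is the highest root), while $\gamma_2,\ldots,\gamma_r$ generally are not. For instance in type $C_r$ with $\gamma_i=2\epsilon_i$ and compact simple root $\alpha=\epsilon_{i-1}-\epsilon_i$ one has $\alpha(H_{\gamma_i})=-1$. So the ``$\mu$ minus positive combination'' argument collapses for $i>1$, and your lower bound via $w_0\mu$ has the same defect.

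The paper repairs this with a Weyl-group reduction you did not use: the compact Weyl group $W_c$ acts transitively on $\{\gamma_1,\ldots,\gamma_r\}$, so there exist $w_i\in W_c$ with $w_i\gamma_1=\gamma_i$; since $\Phi(E)$ is $W_c$-stable, the inequality for all $i$ reduces to $i=1$. For $i=1$ one first treats the extremal weights $w.\mu$: dominance of $\mu$ gives $\mu-w.\mu\ge 0$, and since $\gamma_1$ is dominant this yields $(w.\mu)(H_{\gamma_1})\le\mu(H_{\gamma_1})$; for the lower bound one uses the key identification $\gamma_1=w_0\alpha_1$ (with $w_0$ the longest element of $W_c$), so $\alpha_1$ is anti-dominant and the same argument gives $(w.\mu)(H_{\gamma_1})\ge\mu(H_{\alpha_1})$. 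Finally, an arbitrary weight lies in the convex hull of $W_c.\mu$, which transfers the bounds. Once you insert this $W_c$-reduction and the convex-hull step, your proof is complete and coincides with the paper's.
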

\begin{proof}
According to the polar decomposition \eqref{eq:polardecomposition}, any element $z\in\mf n^+$ can be written in the form $z = k z_\b t$. Since $\B{z}{-\bar z} = k\B{z_\b t}{-\bar z_\b t} k^{-1}$ it therefore suffices to prove positive definiteness for the case $k=\Id$. Due to \eqref{eq:BergmanDecomp},
\[
	\rho(\B{z_\b t}{-\bar z_\b t}) = \prod_{i=1}^r\exp(\ln(1+t_i^2)\,d\rho(H_{\gamma_i})).
\]
If $E = \bigoplus_{\lambda\in\Phi(E)} E^\lambda$ denotes the decomposition of $E$ into weight spaces, we thus obtain for $v\in E^\lambda$ the relation
\[
	\rho(\B{z_\b t}{-\bar z_\b t})v = \prod_{i=1}^r (1+t_i^2)^{\lambda(H_{\gamma_i})}v.
\]
It follows that $\rho(\B{z_\b t}{-\bar z_\b t})$ is positive definite. For arbitrary $k\in K$, the identity $\Delta(z,-\bar z) = \prod_i(1+t_i^2)$ is proved in \cite[\S\,16.15]{Lo75}, and for the estimates on $|\rho(\B{z}{-\bar z}^{-\half})v|^2$ it now suffices to show that 
\begin{align}\label{eq:weightinequalities}
	\mu(H_{\alpha_1})\leq\lambda(H_{\gamma_i})\leq\mu(H_{\gamma_1})
\end{align}
for all $\lambda\in\Phi(E)$ and $i=1,\ldots,r$. Let $W_c$ denote the Weyl group of $\Phi_c$. Since $\Phi(E)$ is $W_c$-stable, and since for each $i=1,\ldots, r$ there exists an element $w_i\in W_c$ such that $w_i\gamma_1=\gamma_i$, it suffices to prove \eqref{eq:weightinequalities} for $i=1$. Next consider weights of $E$ of the form $\lambda = w.\mu$ with $w\in W_c$. Since $\mu$ is dominant for $\Phi_c^+$, it follows that $\mu-w.\mu$ is positive (see \cite[VI\,\S6]{Bou02}). Therefore, since $\gamma_1$ is dominant we obtain $(\mu-w.\mu)(H_{\gamma_1})\geq 0$, and hence $\mu(H_{\gamma_1})\geq (w.\mu)(H_{\gamma_1})$. We note that $\gamma_1 = w_0\alpha_1$, where $w_0$ is the longest element of the Weyl group $W_c$. The same argument as above applied to $\mu-(w_0w).\mu$ and the anti-dominant element $\alpha_1$ yields $(\mu-(w_0w).\mu)(H_{\alpha_1})\leq 0$ and hence $\mu(H_{\alpha_1})\leq(w.\mu)(H_{\gamma_1})$. We conclude that
\begin{align}\label{eq:weightestimate}
	\mu(H_{\alpha_1})\leq (w.\mu)(H_{\gamma_1})\leq \mu(H_{\gamma_1})\quad\text{for all $w\in W_c$}.
\end{align}
For arbitrary $\lambda\in\Phi(E)$, recall that $\Phi(E)$ is contained in the convex hull of $\Set{w.\mu}{w\in W_c}$, see \cite[VIII\,\S7]{Bou05}. Therefore, $\lambda = \sum_{w\in W_c} c_w\cdot(w.\mu)$ with $0\leq c_w\leq 1$ and $\sum_{w\in W_c} c_w = 1$, and \eqref{eq:weightestimate} implies
\[
	\mu(H_{\alpha_1})\leq\lambda(H_{\gamma_1}) \leq \mu(H_{\gamma_1})
	\quad\text{for all $\lambda\in\Phi(E)$},
\]
This completes the proof.
\end{proof}

Now we have everything in place to prove the existence of nearly holomorphic sections. We consider the following explicitly given polynomials: Let $\NN^r_\geq$ denote the set of all $\b m=(m_1,\ldots, m_r)\in\NN^r$ satisfying $m_1\geq \cdots\geq m_r\geq 0$, and for $\b m\in\NN^r_\geq$ define
\begin{align}\label{eq:JordanMinors}
	p_\b m(y):= \Delta_1(y)^{m_1-m_2}\cdots\Delta_{r-1}(y)^{m_{r-1}-m_r}\cdot\Delta_r(y)^{m_r},
\end{align}
where $\Delta_i(y):=\Delta(\epsilon_i,\bar\epsilon_i - y)$ with $\epsilon_i := e_1+\cdots+e_i$. This is a polynomial on $\mf n^-$. In fact it is well-known that this is a highest weight vector for the induced action of $K$ on $\Poly(\mf n^-)$, see also Section~\ref{sec:ResOnL2Cond}. Here we consider $p_\b m$ as an element of $\Poly(\mf n^+\times\mf n^-)$, constant along $\mf n^+$.

\begin{theorem}\label{thm:MinorIntegral}
	If $\b m\in\NN^r_\geq$ satisfies $m_r+\mu(H_{\alpha_1})\geq 0$, then
	\[
		p_\b m(y)\cdot E\subseteq\Poly^2(\Vp\times\Vm,E).
	\]
	In particular, $\Poly^2(\Vp\times\Vm,E)$ is non-trivial.
\end{theorem}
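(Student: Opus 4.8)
The plan is to verify the $L^2$-condition of Proposition~\ref{prop:localpicturenorm} for $f_p(z)=p_\b m(q(z))\,v$ with $v\in E$, $|v|=1$, and then deduce the general case by linearity in $v$. Since $p_\b m(q(z))$ enters as a scalar factor, Proposition~\ref{prop:BergmanEigenvalues} yields $\left|\rho(\B{z}{-\bar z}^{-\half})p_\b m(q(z))v\right|^2\le|p_\b m(q(z))|^2\,\Delta(z,-\bar z)^{-\mu(H_{\alpha_1})}$, so by Proposition~\ref{prop:localpicturenorm} it suffices to prove
\[
	\int_{\mf n^+}|p_\b m(q(z))|^2\,\Delta(z,-\bar z)^{-p-\mu(H_{\alpha_1})}\,d\lambda(z)<\infty .
\]

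The crucial step is the pointwise bound $|p_\b m(q(z))|^2\le\Delta(z,-\bar z)^{-m_r}$ for all $z\in\mf n^+$. For $z=z_\b t=t_1e_1+\cdots+t_re_r$ as in the polar decomposition \eqref{eq:polardecomposition}, a standard Jordan-theoretic computation of the quasi-inverse (rank-one reduction along the frame, cf.\ \eqref{eq:BergmanDecomp}) gives $q(z_\b t)=\sum_i\frac{t_i}{1+t_i^2}\bar e_i$, and evaluating the Jordan minors via the Peirce decomposition relative to $\epsilon_i$ yields $\Delta_i(q(z_\b t))=\prod_{j\le i}\frac{t_j}{1+t_j^2}$. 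Telescoping the exponents in \eqref{eq:JordanMinors} gives $|p_\b m(q(z_\b t))|^2=\prod_j\big(\frac{t_j}{1+t_j^2}\big)^{2m_j}$, which, using $\frac{t}{1+t^2}\le(1+t^2)^{-\half}$ for $t\ge0$ and $m_j\ge m_r$, is at most $\prod_j(1+t_j^2)^{-m_r}=\Delta(z_\b t,-\bar z_\b t)^{-m_r}$ by Proposition~\ref{prop:BergmanEigenvalues}. For a general $z=kz_\b t$ one uses the $K$-equivariance of $q$, the $K$-invariance of $\Delta(\cdot,\overline{\,\cdot\,})$, and the fact that for each $i$ and each $w\in\mf n^-$ one has $\sup_{k\in K}|\Delta_i(kw)|\le\prod_{j\le i}s_j(w)$, where $s_1(w)\ge\cdots\ge s_r(w)\ge0$ are the singular values of $w$ (its radial part in the polar decomposition of $\mf n^-$); hence $\sup_{k\in K}|p_\b m(kw)|\le\prod_j s_j(w)^{m_j}$, and for $w=q(z_\b t)$ the same chain of elementary inequalities gives $|p_\b m(q(kz_\b t))|^2\le\Delta(kz_\b t,-\overline{kz_\b t})^{-m_r}$. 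This $K$-orbit estimate for the highest weight vector $p_\b m$ is the step I expect to be the main obstacle: on the frame everything is one-dimensional, but controlling $p_\b m$ over an entire $K$-orbit needs the singular-value (polar) decomposition of $\mf n^-$ together with a majorization property of the Jordan minors; alternatively one may replace the supremum by the $K$-average $\int_K|p_\b m(kq(z_\b t))|^2\,dk$ and compute it through Schur orthogonality in terms of the weights of the irreducible $K$-module generated by $p_\b m$.

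Granting $|p_\b m(q(z))|^2\le\Delta(z,-\bar z)^{-m_r}$, the integral above is dominated by $\int_{\mf n^+}\Delta(z,-\bar z)^{-m_r-p-\mu(H_{\alpha_1})}\,d\lambda(z)$, and the polar integral formula \eqref{eq:intformula} with $\Delta(kz_\b t,-\overline{kz_\b t})=\prod_i(1+t_i^2)$ reduces its finiteness to that of
\[
	\int_{[0,\infty)^r}\prod_i(1+t_i^2)^{-m_r-p-\mu(H_{\alpha_1})}\prod_i t_i^{2b+1}\prod_{i<j}|t_i^2-t_j^2|^a\,dt_1\cdots dt_r .
\]
Integrability near the coordinate hyperplanes is automatic from the factors $t_i^{2b+1}$; near infinity, ordering $t_1\ge\cdots\ge t_r$ and bounding $\prod_{i<j}|t_i^2-t_j^2|^a\le\prod_i t_i^{2a(r-i)}$, the exponent of $t_i$ is at most $-2(m_r+\mu(H_{\alpha_1}))-2p+2b+1+2a(r-i)$, which by the structure relation $p=2+a(r-1)+b$ of \eqref{eq:pRelation} equals $-2(m_r+\mu(H_{\alpha_1}))-3-2a(i-1)$, and this is $<-1$ for every $i$ precisely when $m_r+\mu(H_{\alpha_1})\ge0$. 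This gives $p_\b m(y)\cdot E\subseteq\Poly^2(\Vp\times\Vm,E)$. Finally, for any irreducible $(\rho,E)$ with highest weight $\mu$, choosing $\b m=(m,\dots,m)\in\NN^r_\geq$ with $m\ge\max(0,-\mu(H_{\alpha_1}))$ makes $m_r+\mu(H_{\alpha_1})\ge0$, and $p_\b m=\Delta_r^{\,m}$ is a non-zero polynomial; since $E\ne\{0\}$, $\Poly^2(\Vp\times\Vm,E)$ is non-trivial. Apart from the $K$-orbit estimate of the second paragraph, the remaining steps are routine bookkeeping with \eqref{eq:intformula} and \eqref{eq:pRelation}.
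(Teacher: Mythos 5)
Your argument is correct and follows essentially the same route as the paper: bound the Bergman factor via Proposition~\ref{prop:BergmanEigenvalues}, bound $|p_\b m(q(z))|$ via the singular-value majorization of the Jordan minors along the polar decomposition, and conclude with the integral formula \eqref{eq:intformula} and the relation \eqref{eq:pRelation}. The ``main obstacle'' you flag --- the $K$-orbit estimate $\sup_{k\in K}|p_\b m(kw)|\le\prod_j s_j(w)^{m_j}$ --- is precisely what the paper imports by citation (Corollary~2.4 of \cite{S13a}), and the only other (cosmetic) difference is that the paper substitutes $s_i=t_i^2/(1+t_i^2)$ to reduce to a manifestly finite Selberg-type integral on $[0,1]^r$ instead of your power count at infinity.
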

\begin{proof}
We fix an element $v\in E$ and may assume $|v|^2=1$. Then we have to evaluate the integral
\begin{align*}
	I_{\b m,v}:=\int_\Vp |p_{\b m}(\bar z^{-z})|^2
		\left|\rho(\B{z}{-\bar z}^{-\half})v\right|^2
		\Delta(z,-\bar z)^{-p}d\lambda(z)\,.
\end{align*}
For this, we use the integral formula \eqref{eq:intformula}. For $z=kz_\b t$ as in \eqref{eq:polardecomposition}, Proposition~\ref{prop:BergmanEigenvalues} yields
\[
	|\rho(\B{z}{-\bar z}^{-\half})v|\leq\prod_{i=1}^r(1+t_i^2)^{-\mu(H_{\alpha_1})}.
\]
Moreover, due to \cite[Corollary~2.4]{S13a} the polynomial $p_\b m$ satisfies
\[
	|p_\b m(y)| \leq \prod_{i=1}^r\sigma_i^{m_i}, 
\]
where $\sigma_1\geq\cdots\geq\sigma_r\geq 0$ are the singular values of $y$. Since $\bar z^{-z} = k\sum_{i=1}^r \tfrac{t_i}{1+t_i^2}\,\bar e_i$ (see \cite[\S\,3.18]{Lo77}) and $0\leq\tfrac{t_i}{1+t_i^2}<1$, it follows that
\[
	|p_\b m(\bar z^{-z})|^2 \leq \prod_{i=1}^r \left(\tfrac{t_i}{1+t_i^2}\right)^{2m_r}.
\]
We thus obtain
\[
	I_{\b m,v}\leq c\cdot\int_{[0,\infty)^r}
		\prod_{i=1}^r t_i^{2m_r + 2b+1}(1+t_i^2)^{-2m_r-\mu(H_{\alpha_1})-p}
		\prod_{i<j}|t_i^2-t_j^2|^a dt_1\cdots dt_r,
\]
where $c$ is a positive normalizing constant.
Setting $s_i:=\tfrac{t_i^2}{1+t_i^2}$, this transforms to
\begin{align*}
	I_{\b m,v} \leq \frac{c}{2^r}\cdot \int_{[0,1]^r}
			\prod_{i=1}^r s_i^{m_r+b}(1-s_i)^{m_r+\mu(H_{\alpha_1})}\prod_{i<j}|s_i-s_j|^a ds_1\cdots ds_r\,,
\end{align*}
where we have used the relation $p=2+a(r-1)+b$, see \eqref{eq:pRelation}. Since $a,b,m_r\in\NN$, the assumption $m_r+\mu(H_{\alpha_1})\geq 0$ implies that the integrand of the right hand side is bounded, so the integral is finite (in fact, it is a finite Selberg type integral). This completes the proof.
\end{proof}

As a corollary of Theorem~\ref{thm:MinorIntegral} we obtain in combination with Theorem~\ref{thm:existenceofnearlyholsec} and \cite[Theorem~2.1]{S12a} the following important result.

\begin{corollary}\label{cor:existence}
	The space $\sN(X,\sE)$ of nearly holomorphic sections is dense in $\sC(X,\sE)$ with respect to 
	uniform convergence.
\end{corollary}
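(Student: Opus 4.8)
The plan is to deduce this density statement by combining the three ingredients cited: Theorem~\ref{thm:existenceofnearlyholsec}, which identifies $\sN(X,\sE)$ with $\iota_\sP(\Poly^2(\Vp\times\Vm,E))$ and in particular shows this space consists of $U$-finite vectors; Theorem~\ref{thm:MinorIntegral}, which guarantees $\sN(X,\sE)$ is non-trivial; and \cite[Theorem~2.1]{S12a}, which I expect to be an abstract statement asserting that the space of $U$-finite vectors is dense in the appropriate function space. So the first step is to observe that, since $U$ is a compact group acting on the Banach space $\sC(X,\sE)$ of continuous sections by isometries, the subspace of $U$-finite vectors is dense in $\sC(X,\sE)$ by the Peter--Weyl theorem (this is presumably the content of \cite[Theorem~2.1]{S12a}).

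Next I would invoke Theorem~\ref{thm:existenceofnearlyholsec}: every nearly holomorphic section is $U$-finite (indeed, the proof of that theorem exhibits $f_p$ as lying in the finite-dimensional $\mf u_\CC$-invariant space $C_k$), and conversely one needs that \emph{every} $U$-finite vector in $\sC(X,\sE)$ is nearly holomorphic. This converse is where the interplay with $L^2(X,\sE)$ enters: a $U$-finite continuous section is automatically smooth, lies in $L^2(X,\sE)$, and generates a finite-dimensional $U$-subrepresentation; by the arguments surrounding Theorem~\ref{thm:existenceofnearlyholsec} (the identification of $\sN(X,\sE)$ with the $U$-finite vectors of $L^2(X,\sE)$, referenced there via \cite{S12a}) such a section must be nearly holomorphic. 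Hence the space of $U$-finite vectors in $\sC(X,\sE)$ coincides with $\sN(X,\sE)$, and the density from the Peter--Weyl step transfers directly.

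The role of Theorem~\ref{thm:MinorIntegral} is essentially to rule out the degenerate possibility that $\sN(X,\sE)$ — and hence the $U$-finite part of $\sC(X,\sE)$ — is zero; but in fact the Peter--Weyl argument already gives non-triviality whenever $\sC(X,\sE)\neq 0$, so Theorem~\ref{thm:MinorIntegral} is more properly needed upstream (to complete the proof of surjectivity in Theorem~\ref{thm:existenceofnearlyholsec}, as that proof makes explicit). Thus the corollary is a short formal consequence once the identification $\sN(X,\sE) = (\text{$U$-finite vectors in }\sC(X,\sE))$ is in hand.

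I expect the only real subtlety to be the matching of topologies: \cite[Theorem~2.1]{S12a} presumably states density of $U$-finite vectors with respect to uniform convergence on $\sC(X,\sE)$ (not merely $L^2$-density), which is exactly what is claimed, so one should make sure to cite the $\sC$-version rather than the $L^2$-version. Beyond that, everything is a routine assembly of the stated results, and no further computation is required.
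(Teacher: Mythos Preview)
Your argument is correct and tracks the paper's reasoning: Theorems~\ref{thm:existenceofnearlyholsec} and~\ref{thm:MinorIntegral} together give non-triviality of $\sN(X,\sE)$, and then a result from \cite{S12a} upgrades this to density in $\sC(X,\sE)$.

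One correction worth making concerns your guess about the content of \cite[Theorem~2.1]{S12a}. It is very unlikely to be the bare Peter--Weyl statement (which would not merit a paper-specific theorem number); rather, it is almost certainly the conditional result ``if $\sN(X,\sE)$ is non-trivial, then $\sN(X,\sE)$ is dense in $\sC(X,\sE)$''. The paper's proof of the corollary is then the one-line citation chain: non-triviality (from Theorems~\ref{thm:existenceofnearlyholsec} and~\ref{thm:MinorIntegral}) plus \cite[Theorem~2.1]{S12a}. What you have written is in effect a reconstruction of how \cite[Theorem~2.1]{S12a} is proved---via the identification of $\sN(X,\sE)$ with the $U$-finite vectors (this is \cite[Proposition~3.3]{S12a} under a non-triviality hypothesis, restated here as Proposition~\ref{prop:UfiniteAreNearlyHol} once that hypothesis is discharged) followed by Peter--Weyl density in $\sC(X,\sE)$. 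So your route and the paper's are the same argument at different levels of packaging; just be aware that the role of Theorem~\ref{thm:MinorIntegral} is not merely ``upstream'' but is precisely what supplies the non-triviality hypothesis needed to invoke the cited density theorem.
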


\begin{remark}
	In Section~\ref{sec:Applications} we prove the converse implication of
	Theorem~\ref{thm:MinorIntegral}, i.e.,\ $p_\b m(y)\cdot E\subseteq\Poly^2(\Vp\times\Vm,E)$ 
	necessarily implies that $m_r+\mu(H_{\alpha_1})\geq0$, see Corollary~\ref{cor:propconverse}. In 
	the special case $\b m = \b 0$, this yields that $L^2(X,\sE)$ contains smooth sections that 
	trivialize to constant maps on $\Vp\subseteq X$ if and only if $\mu(H_{\alpha_1})\geq 0$, i.e., 
	if and only if the $\Phi^+_c$-dominant weight $\mu$ is also $\Phi^+$-dominant. Since such a 
	section is actually holomorphic, this gives a criterion for the existence of non-trivial 
	holomorphic sections in $\sE$. Conversely, it immediately follows from 
	Proposition~\ref{prop:uCactiononlocalnearlyhol}, that if $\sO(X,\sE)$ is non-trivial, then a 
	corresponding highest weight vector restricts to a constant map on $\Vp\subseteq X$. We thus 
	have recovered the well-known Borel--Weil Theorem for the case of Hermitian symmetric spaces.
\end{remark}

\begin{corollary}[Borel--Weil]\label{cor:BorelWeil}
	Let $\sE=G\times_P E$ be an irreducible homogeneous vector bundle on the Hermitian symmetric 
	space $X=G/P$ of compact type, and let $\mu$ be the highest weight of $E$. Then $\sE$ admits 
	non-trivial holomorphic sections if and only if $\mu$ is $\Phi^+$-dominant, i.e., if and only if
	$\mu(H_{\alpha_1})\geq 0$.
\end{corollary}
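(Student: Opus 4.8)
Here is a plan for proving Corollary~\ref{cor:BorelWeil}.

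\medskip

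The plan is to deduce both implications from the case $\b m=\b 0$ of Theorem~\ref{thm:MinorIntegral} and its converse, working in the local picture. For the ``if'' direction, assume $\mu(H_{\alpha_1})\geq0$; then $\b m=\b 0\in\NN^r_\geq$ satisfies $m_r+\mu(H_{\alpha_1})=\mu(H_{\alpha_1})\geq0$, so Theorem~\ref{thm:MinorIntegral} applies, and since $p_{\b 0}\equiv1$ it gives $E\subseteq\Poly^2(\Vp\times\Vm,E)$ (viewing $E$ as the constant polynomials). By Theorem~\ref{thm:existenceofnearlyholsec}, each $v\in E$ yields a section $f_v:=\iota_\sP(v)\in\sN(X,\sE)$ with $f_v|_{\Vp}\equiv v$. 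In the holomorphic trivialization $\sE_{\Vp}\cong\Vp\times E$ this is the constant map, so $\delbar f_v$ vanishes on the dense subset $\Vp$, hence on $X$ by continuity; thus $f_v\in\sO(X,\sE)$, so $\sO(X,\sE)\neq0$ (in fact $\sO(X,\sE)\cong E$).

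For the converse, assume $\sO(X,\sE)\neq0$; I would first show that $\sO(X,\sE)$ contains every constant section. A holomorphic section $f$ has local expression $p_f=f_0\in\Poly(\Vp,E)$ with no $y$-dependence, and by Proposition~\ref{prop:uCactiononlocalnearlyhol} the elements $v\in\Vp=\bigoplus_{\alpha\in\Phi^+_{nc}}\mf g_\alpha$ act on $\sO(X,\sE)$ by $f_0\mapsto-\del_v f_0$, which lowers the $\Vp$-degree; choosing a nonzero element of the finite-dimensional, $\mf g$-stable space $\sO(X,\sE)$ whose $f_0$ has minimal $\Vp$-degree, minimality forces $\del_v f_0=0$ for every $v$, so $f_0$ is a nonzero constant $v_0\in E$. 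Since $\mf k$ acts on constant sections through $d\rho|_{\mf k}$ (again by Proposition~\ref{prop:uCactiononlocalnearlyhol}), the set of $v\in E$ whose constant section lies in $\sO(X,\sE)$ is a nonzero $K$-submodule of $E$, hence all of $E$ by $K$-irreducibility. As $\sO(X,\sE)\subseteq L^2(X,\sE)$, Theorem~\ref{thm:existenceofnearlyholsec} now gives $p_{\b 0}(y)\cdot E=E\subseteq\Poly^2(\Vp\times\Vm,E)$, and Corollary~\ref{cor:propconverse} (the converse of Theorem~\ref{thm:MinorIntegral} with $\b m=\b 0$) yields $\mu(H_{\alpha_1})\geq0$.

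To keep this last step within the present section one can instead argue directly: the constant section of a lowest weight vector $v_1\in E^{w_0.\mu}$, where $w_0$ denotes the longest element of $W_c$, lies in $L^2(X,\sE)$; evaluating its $L^2$-norm through Proposition~\ref{prop:localpicturenorm}, the integral formula~\eqref{eq:intformula}, the polar decomposition, the relation $\B{z}{-\bar z}=k\,\B{z_\b t}{-\bar z_\b t}\,k^{-1}$ for $z=kz_\b t$, unitarity of $\rho(k)$, and the weight-space eigenvalues $\rho(\B{z_\b t}{-\bar z_\b t})|_{E^\lambda}=\prod_i(1+t_i^2)^{\lambda(H_{\gamma_i})}$ from the proof of Proposition~\ref{prop:BergmanEigenvalues}, one writes it as a sum over $\lambda\in\Phi(E)$ of nonnegative terms, the term for $\lambda=w_0.\mu$ being a positive multiple of $\int_{[0,\infty)^r}\prod_i(1+t_i^2)^{-(w_0.\mu)(H_{\gamma_i})-p}\,t_i^{2b+1}\prod_{i<j}|t_i^2-t_j^2|^a\,dt$ (positivity of the coefficient because the relevant $K$-integral is nonzero at $k=\operatorname{Id}$). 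After the substitution $s_i=t_i^2/(1+t_i^2)$ and~\eqref{eq:pRelation} this becomes a multiple of $\int_{[0,1]^r}\prod_i s_i^b(1-s_i)^{(w_0.\mu)(H_{\gamma_i})}\prod_{i<j}|s_i-s_j|^a\,ds$; integrating $s_1$ last, finiteness forces $(w_0.\mu)(H_{\gamma_1})\geq0$, and since $\gamma_1=w_0\alpha_1$ with $w_0^2=1$ this quantity equals $\mu(H_{\alpha_1})$. Thus the ``if'' direction is immediate, and the content lies entirely in this last step of the converse --- either citing the independently established Corollary~\ref{cor:propconverse}, or performing the norm computation above and isolating the lowest-weight contribution, whose convergence is governed precisely by $\mu(H_{\alpha_1})$; the identification of a holomorphic highest weight section as a constant $E^\mu$-valued map is, by contrast, an immediate consequence of Proposition~\ref{prop:uCactiononlocalnearlyhol}.
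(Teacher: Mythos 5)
Your proposal is correct and follows essentially the same route as the paper, which deduces the corollary from the $\b m=\b 0$ case of Theorem~\ref{thm:MinorIntegral} together with its converse (Corollary~\ref{cor:propconverse}) and the observation, via Proposition~\ref{prop:uCactiononlocalnearlyhol}, that a nonzero holomorphic section of extremal type restricts to a constant map on $\Vp$. Your minimal-$\Vp$-degree argument is just a mild variant of the paper's ``highest weight vectors are $\mf n^+$-annihilated, hence constant'' step, and your direct Selberg-type norm computation merely inlines the special case of Proposition~\ref{prop:necessarycondition}/Corollary~\ref{cor:propconverse} that the paper cites.
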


\section{Application to harmonic analysis}\label{sec:Applications}

We now turn to the problem of decomposing the space of $L^2$-sections in $\sE$ under the action of $U$. Due to abstract representation theory of compact Lie groups, $L^2(X,\sE)$ decomposes into the Hilbert sum
\begin{align}\label{eq:L2decomposition}
	L^2(X,\sE)= \widehat{\bigoplus_{\lambda\in\Lambda}}\; m^\lambda\cdot V_\lambda\,
\end{align}
of $U$-isotypic components, where $m^\lambda\geq 0$ denotes the multiplicity of the $U$-type $V_\lambda$ of highest weight $\lambda\in\Lambda$. 
Frobenius reciprocity yields that all multiplicities are finite.

\subsection{Main result}\label{subsec:mainresult}
Having proved the existence of non-trivial nearly holomorphic sections (Corollary~\ref{cor:existence}), we obtain the following improvement of \cite[Proposition~3.3]{S12a}.

\begin{proposition}\label{prop:UfiniteAreNearlyHol}
	The space $\sN(X,\sE)$ of nearly holomorphic sections coincides with the space of 
	$U$-finite vectors in $L^2(X,\sE)$.
\end{proposition}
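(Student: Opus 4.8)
The plan is to prove both inclusions. The inclusion $\sN(X,\sE)\subseteq L^2(X,\sE)_{U\text{-fin}}$ is essentially already available: by Theorem~\ref{thm:existenceofnearlyholsec}, every $f\in\sN(X,\sE)$ equals $f_p$ for some $p\in\Poly^2(\Vp\times\Vm,E)$, and in the proof of that theorem we observed that $f_p$ lies in the finite-dimensional $\mf u_\CC$-invariant space $C_k$ (with $k=\deg_\Vm p$). Hence $U.f_p$ spans a finite-dimensional subspace, so $f_p$ is $U$-finite. I would simply recall this argument in one or two sentences.

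For the reverse inclusion $L^2(X,\sE)_{U\text{-fin}}\subseteq\sN(X,\sE)$, the key point is that $\sN(X,\sE)$ is a $U$-invariant subspace of $L^2(X,\sE)$ — this follows from Proposition~\ref{prop:uCactiononlocalnearlyhol} (the $\mf u_\CC$-action preserves the form $p\mapsto f_p$ with $\deg_\Vm$ non-increasing, hence preserves $\sN$), together with smoothness of nearly holomorphic sections. Moreover, by Corollary~\ref{cor:existence}, $\sN(X,\sE)$ is dense in $\sC(X,\sE)$ in the uniform topology, and therefore dense in $L^2(X,\sE)$. Now a standard argument from the representation theory of compact groups applies: a dense $U$-invariant subspace of $L^2(X,\sE)$ must contain every $U$-isotypic component in full. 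Indeed, for each $\lambda\in\Lambda$ the projection $P_\lambda$ onto the $V_\lambda$-isotypic component is given by convolution with (a multiple of) the character $\chi_\lambda$, hence is continuous and $U$-equivariant; it maps the dense invariant subspace $\sN(X,\sE)$ onto a dense subspace of the finite-dimensional space $m^\lambda V_\lambda$, which must then be all of it. Since the $U$-finite vectors are exactly $\bigoplus_\lambda m^\lambda V_\lambda$ (algebraic direct sum), we conclude $L^2(X,\sE)_{U\text{-fin}}\subseteq\sN(X,\sE)$.

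This mirrors the argument in \cite[Proposition~3.3]{S12a}, the only new ingredient being that non-triviality of $\sN(X,\sE)$ — and in fact its density, Corollary~\ref{cor:existence} — is now unconditional, whereas in \cite{S12a} it required the existence of holomorphic sections. I expect no real obstacle here: the main work (the $L^2$-estimates yielding density) has already been done in Section~\ref{sec:Existence}, and what remains is the routine functional-analytic packaging. The one point deserving a careful sentence is why density of a $U$-invariant subspace forces it to contain all $U$-types with full multiplicity — but this is exactly the continuity and equivariance of the isotypic projectors $P_\lambda$ on $L^2(X,\sE)$ together with finiteness of the multiplicities $m^\lambda$ from Frobenius reciprocity.
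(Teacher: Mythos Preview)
Your proposal is correct and matches the paper's approach: the paper gives no separate proof but simply notes that Corollary~\ref{cor:existence} removes the non-triviality hypothesis needed in \cite[Proposition~3.3]{S12a}, and your write-up unpacks exactly that argument (the two inclusions via $U$-finiteness of $f_p$ and the isotypic-projection argument on a dense $U$-invariant subspace). The one clarification worth adding is that $P_\lambda(\sN(X,\sE))\subseteq\sN(X,\sE)$ because each $f\in\sN(X,\sE)$ is already $U$-finite, so $P_\lambda f$ lies in the finite-dimensional $U$-span of $f$ inside $\sN(X,\sE)$; you implicitly use this to conclude $m^\lambda V_\lambda\subseteq\sN(X,\sE)$ rather than merely $m^\lambda V_\lambda\subseteq P_\lambda(\sN(X,\sE))$.
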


According to Theorem~\ref{thm:existenceofnearlyholsec} we may identify $\sN(X,\sE)$ with the space $\Poly^2(\mf n^+\times\mf n^-,E)$ of $E$-valued polynomials on $\mf n^+\times\mf n^-$ satisfying the $L^2$-condition \eqref{eq:L2condition} via the isomorphism
\[
	\iota_\sP: \Poly^2(\Vp\times\Vm,E)\to \sN(X,\sE),\ p\mapsto f_p(z):=p(z,q(z)).
\]
The corresponding action of $\mf u_\CC$ is given by Proposition~\ref{prop:uCactiononlocalnearlyhol}. Since the Kähler potential used for the definition of $q(z)$ is $K$-invariant (Lemma~\ref{lem:KinvKPot}), it readily follows that the action of $K\subseteq U$ on $p\in\Poly^2(\mf n^+\times\mf n^-,E)$ is given by
\[
	(k.p)(x,y)=\rho(k)p(k^{-1}x,k^{-1}y),
\]
which coincides with the induced action of $K$ on arbitrary $E$-valued polynomials on $\mf n^+\times\mf n^-$. Let
\[
	\Poly^2(\mf n^-,E)\subseteq\Poly^2(\mf n^+\times\mf n^-,E)
\]
denote the $K$-invariant subspace of polynomials that are constant along $\mf n^+$. In order to state our main result recall that if $V_\lambda$ is a $U$-type of highest weight $\lambda\in\Lambda$, then the space $V_\lambda^{\mf n^+}$ of $\mf n^+$-invariants,
\[
	V_\lambda^{\mf n^+} :=\Set{v\in V_\lambda}{Y.v=0\text{ for all }Y\in\mf n^+},
\]
is a $K$-type of highest weight $\lambda$. The following is a generalization of \cite[Theorem~3.5]{S12a}, where we assumed that $\sE$ admits non-trivial holomorphic sections. Recall that $\iota_\sN$ denotes the inverse of $\iota_\sP$.

\begin{theorem}\label{thm:UKequivalence}
	For all $\lambda\in\Lambda$, the map
	\begin{align*}
		\varphi_\lambda:\Hom_U(V_\lambda,L^2(X,\sE))\to\Hom_K(V_\lambda^{\mf n^+},\Poly^2(\Vm,E)),\
		T\mapsto \iota_{\sN}\circ T|_{V_\lambda^{\mf n^+}}
	\end{align*}
	is an isomorphism of vector spaces. Moreover, any $K$-type in $\Poly^2(\Vm,E)$ is isomorphic to $V_\lambda^{\mf n^+}$ for some $\lambda\in\Lambda$, hence there is a bijection between $U$-types in $L^2(X,\sE)$ and $K$-types in $\Poly^2(\mf n^-,E)$.
\end{theorem}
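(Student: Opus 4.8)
The strategy is to reduce the isomorphism statement to a dimension count, exploiting that $\sN(X,\sE)$ is at the same time the image of $\iota_\sP$ (Theorem~\ref{thm:existenceofnearlyholsec}) and the space of $U$-finite vectors in $L^2(X,\sE)$ (Proposition~\ref{prop:UfiniteAreNearlyHol}). First I would check that $\varphi_\lambda$ is well defined. Given $T\in\Hom_U(V_\lambda,L^2(X,\sE))$, its image lies in the $\lambda$-isotypic component, hence consists of $U$-finite vectors and lies in $\sN(X,\sE)$, so $\iota_\sN\circ T$ is defined and $\mf u_\CC$-equivariant: indeed $\iota_\sP$ intertwines the $\mf u_\CC$-actions by Proposition~\ref{prop:uCactiononlocalnearlyhol}, and $\iota_\sN$ is $K$-equivariant for the natural action $(k.p)(x,y)=\rho(k)p(k^{-1}x,k^{-1}y)$. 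The first formula of Proposition~\ref{prop:uCactiononlocalnearlyhol} shows that the action of $v\in\mf n^+$ transported along $\iota_\sP$ is honest differentiation $p\mapsto-\del_{(v,0)}p$, since $\iota_\sP$ sends the polynomial $-\del_{(v,0)}p$ to $d\pi_\CC(v)f_p$. Hence for $\xi\in V_\lambda^{\mf n^+}$ the polynomial $\iota_\sN(T\xi)$ has vanishing $\mf n^+$-directional derivative, so it is constant along $\mf n^+$ and lies in $\Poly^2(\Vm,E)$; and since restriction to the $K$-submodule $V_\lambda^{\mf n^+}$ is $K$-equivariant, $\varphi_\lambda(T)\in\Hom_K(V_\lambda^{\mf n^+},\Poly^2(\Vm,E))$.

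Injectivity is then immediate. A $U$-equivariant linear map out of the finite-dimensional $V_\lambda$ intertwines the $\mf u$-actions, and since its image consists of smooth vectors on which the $\mf u$-action extends to $\mf u_\CC=\mf g$, the map $T$ is $\mf g$-equivariant. As $V_\lambda$ is $\mf g$-irreducible and $V_\lambda^{\mf n^+}\neq 0$, the vanishing of $\iota_\sN\circ T$ on $V_\lambda^{\mf n^+}$ forces, by injectivity of $\iota_\sN$, that $T|_{V_\lambda^{\mf n^+}}=0$, whence $T=0$.

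The heart of the proof is the $K$-type decomposition of $\Poly^2(\Vm,E)$. Transporting along $\iota_\sN$ and using the identification of the transported $\mf n^+$-action above, $\Poly^2(\Vm,E)$ corresponds $K$-equivariantly to $\{f\in\sN(X,\sE):\mf n^+.f=0\}$, that is, to the $\mf n^+$-invariants of the space of $U$-finite vectors in $L^2(X,\sE)$. Writing those $U$-finite vectors as the algebraic direct sum $\bigoplus_{\lambda\in\Lambda}m^\lambda V_\lambda$ coming from \eqref{eq:L2decomposition} and passing to $\mf n^+$-invariants yields $\Poly^2(\Vm,E)\cong\bigoplus_{\lambda\in\Lambda}m^\lambda V_\lambda^{\mf n^+}$ as $K$-modules. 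Each $V_\lambda^{\mf n^+}$ is irreducible of highest weight $\lambda$, and distinct $\lambda\in\Lambda$ give non-isomorphic $K$-modules, so the $V_\lambda^{\mf n^+}$-isotypic component of $\Poly^2(\Vm,E)$ is precisely $m^\lambda\,V_\lambda^{\mf n^+}$. Consequently $\dim\Hom_K(V_\lambda^{\mf n^+},\Poly^2(\Vm,E))=m^\lambda=\dim\Hom_U(V_\lambda,L^2(X,\sE))$, which is finite by Frobenius reciprocity, and combined with injectivity this makes $\varphi_\lambda$ an isomorphism. The remaining statements drop out of the same decomposition: every $K$-type occurring in $\Poly^2(\Vm,E)$ is $V_\lambda^{\mf n^+}$ for some $\lambda$ with $m^\lambda>0$, and $V_\lambda\leftrightarrow V_\lambda^{\mf n^+}$ is the asserted bijection between $U$-types in $L^2(X,\sE)$ and $K$-types in $\Poly^2(\Vm,E)$.

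I expect the main obstacle to be the bookkeeping for the dimension count rather than any deep difficulty: one must make sure the transported $\mf n^+$-action really is polynomial differentiation, so that $\mf n^+$-invariance of a polynomial matches its being constant along $\mf n^+$, and that taking $\mf n^+$-invariants commutes with passing from the Hilbert-space decomposition \eqref{eq:L2decomposition} to the algebraic direct sum of $U$-finite vectors. Everything else is formal once Theorem~\ref{thm:existenceofnearlyholsec} and Proposition~\ref{prop:UfiniteAreNearlyHol} are available.
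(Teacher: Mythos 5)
Your argument is correct, and it reaches the conclusion by a genuinely different route in the surjectivity step. The paper's proof is constructive: given a nonzero $S\in\Hom_K(V_\lambda^{\mf n^+},\Poly^2(\Vm,E))$, it forms $f_\lambda:=\iota_\sP(Sv_\lambda)$ for the highest weight vector $v_\lambda$, lets $U$ act to generate a $U$-type $\tilde V_\lambda\subseteq L^2(X,\sE)$, and defines the preimage $T$ via $Tv_\lambda=f_\lambda$; the ``moreover'' clause is then obtained by running the same generation argument starting from the highest weight vector of an arbitrary $K$-type in $\Poly^2(\Vm,E)$. You instead identify $\Poly^2(\Vm,E)$, via $\iota_\sN$ and the formula $d\pi_\CC(v)f_p=f_{-\del_{(v,0)}p}$ from Proposition~\ref{prop:uCactiononlocalnearlyhol}, with the $\mf n^+$-invariants of the $U$-finite vectors, decompose it as $\bigoplus_\lambda m^\lambda V_\lambda^{\mf n^+}$, and conclude by Schur's lemma and a dimension count that the injective map $\varphi_\lambda$ between spaces of common finite dimension $m^\lambda$ is bijective. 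Both proofs rest on the same two pillars (Theorem~\ref{thm:existenceofnearlyholsec} and Proposition~\ref{prop:UfiniteAreNearlyHol}); what your version buys is that the ``moreover'' statement and the bijection of types drop out for free from the explicit $K$-decomposition of $\Poly^2(\Vm,E)$, at the cost of having to check carefully that the transported $\mf n^+$-action is honest differentiation along $\mf n^+$ (so that $\mf n^+$-invariance matches constancy along $\mf n^+$) and that passing to $\mf n^+$-invariants commutes with the algebraic isotypic decomposition --- both of which you address correctly. The paper's construction avoids any dimension bookkeeping and in particular does not need to invoke finiteness of $m^\lambda$ at this point, whereas your count does (and you correctly note that Frobenius reciprocity supplies it).
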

\begin{proof}
This theorem is proved essentially in the same way as \cite[Theorem~3.5]{S12a}. For convenience, we recall the details. Since all $U$-finite vectors are nearly holomorphic (Proposition~\ref{prop:UfiniteAreNearlyHol}), the image of $T\in\Hom_U(V_\lambda,L^2(X,\sE))$ is a subspace of $\sN(X,\sE)$, so $\iota_\sN\circ T$ is well-defined. Moreover, due to Proposition~\ref{prop:uCactiononlocalnearlyhol}, the restriction of $\iota_\sN\circ T$ to $V_\lambda^{\mf n^+}$ maps into $\sP^2(\mf n^-,E)$. Therefore, $\varphi_\lambda$ is well-defined.  It is clear that $\varphi_\lambda$ is linear, and bijectivity follows from abstract, but simple arguments. Indeed, if $\varphi_\lambda(T) = 0$ then $T|_{V_\lambda^{\mf n^+}}=0$, and since $V_\lambda^{\mf n^+}$ is a non-trivial subspace of $V_\lambda$, irreducibility of $V_\lambda$ implies that $\ker T = V_\lambda$, so $T=0$. This proves injectivity of $\varphi_\lambda$. For surjectivity, fix a non-trivial $S\in\Hom_K(V_\lambda^{\mf n^+},\Poly^2(\Vm,E))$ and let $v_\lambda$ be the $U$-highest weight vector in $V_\lambda$. Then, $v_\lambda$ is an element of $V_\lambda^{\mf n^+}$ and it is a $K$-highest vector for $V_\lambda^{\mf n^+}$. Consider $f_\lambda:=\iota_\sP(Sv_\lambda)$. Since $f_\lambda\in\sN(X,\sE)$, it is $U$-finite and hence the action of $U$ on $f_\lambda$ generates a $U$-type $\tilde V_\lambda\subseteq L^2(X,\sE)$ of highest weight $\lambda$. Then, $\varphi_\lambda(T) = S$ for $T$ given by the isomorphism $V_\lambda\cong\tilde V_\lambda$ determined by $Tv_\lambda = f_\lambda$. This proves surjectivity. Moreover if $Sv_\lambda$ is replaced by the highest weight vector of any $K$-type in $\Poly^2(\mf n^-,E)$, the same argument shows that this $K$-type is isomorphic to $V_\lambda^{\mf n^+}$ for some $\lambda\in\Lambda$.
\end{proof}

According to Theorem~\ref{thm:UKequivalence}, the space $\Poly^2(\mf n^-,E)$ decomposes under the action of $K$ into
\[
	\Poly^2(\mf n^-,E) = \bigoplus_{\lambda\in\Lambda} m^\lambda\cdot E_\lambda,
\]
where $E_\lambda$ denotes the $K$-type with highest weight $\lambda$, and $m^\lambda$ is the same multiplicity as in \eqref{eq:L2decomposition}. We may consider $\Poly^2(\mf n^-,E)$ as $K$-invariant subspace of the space $\Poly(\mf n^-,E)$ of all $E$-valued polynomials on $\mf n^-$, whose $K$-type decomposition is denoted by
\begin{align*}
	\Poly(\mf n^-,E) = \bigoplus_{\lambda\in\Lambda_c} M^\lambda\cdot E_\lambda.
\end{align*}
Obviously, $m^\lambda\leq M^\lambda$. Moreover, recall that $\Poly(\mf n^-,E)$ is canonically isomorphic (as $K$-module) to the tensor product $\Poly(\mf n^-)\otimes E$, and the well-known Hua--Kostant--Schmid decomposition of $\Poly(\mf n^-)$ \cite{FK90,Hu63,Sc69} yields
\begin{align}\label{eq:HKSDecomposition}
	\Poly(\mf n^-)\otimes E = \bigoplus_{\b m\in\NN^r_\geq}\Poly_\b m(\mf n^-)\otimes E,
\end{align}
where $\NN_\geq^r$ is defined as in \eqref{eq:JordanMinors} and $\Poly_\b m(\mf n^-)\subseteq \Poly(\mf n^-)$ denotes the $K$-type of highest weight $\gamma_\b m:=m_1\gamma_1+\cdots+m_r\gamma_r$. Set $\Gamma:=\Set{\gamma_\b m}{\b m\in\NN^r_\geq}$, and let $\Lambda_E$ denote the set of highest weights $\lambda\in\Lambda$ with positive multiplicity $m^\lambda>0$. Recall that $\Phi(E)$ denotes the set of weights in $E$.

\begin{corollary}\label{cor:multiplicitybounds}
	Let $\Lambda_E\subseteq\Lambda$ denote the set of highest weights corresponding to $U$-types in 
	$L^2(X,\sE)$ with positive multiplicity $m^\lambda$. There are finitely many 
	$\lambda_1,\ldots,\lambda_s\in\big(\Gamma+\Phi(E)\big)\cap\Lambda$ such that
	\[
		\Lambda_E = \bigcup_{i=1}^s \lambda_i+\Gamma.
	\]
	Moreover, for $\lambda\in\Lambda_E$ the multiplicity $m^\lambda$ is bounded by
	\[
		m^\lambda\leq M^\lambda\leq\dim E,
	\]
	where $M^\lambda$ is the multiplicity of the $K$-type $E_\lambda$ in $\Poly(\mf n^-,E)$.
\end{corollary}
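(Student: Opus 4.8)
The plan is to transport everything to the polynomial side via Theorem~\ref{thm:UKequivalence} and then exploit the multiplicative structure of the minors $p_{\b m}$ from Theorem~\ref{thm:MinorIntegral}. First I would recall that, by Theorem~\ref{thm:UKequivalence}, $\lambda\in\Lambda_E$ if and only if the $K$-type $E_\lambda$ occurs in $\Poly^2(\mf n^-,E)$. Using the Hua--Kostant--Schmid decomposition \eqref{eq:HKSDecomposition}, each $K$-type of $\Poly(\mf n^-,E)=\Poly(\mf n^-)\otimes E$ arises inside some $\Poly_{\b m}(\mf n^-)\otimes E$ with $\b m\in\NN^r_\geq$; since $\Poly_{\b m}(\mf n^-)$ has highest weight $\gamma_{\b m}$ and $E$ has weights $\Phi(E)$, every such $K$-type has highest weight in $\gamma_{\b m}+\Phi(E)\subseteq\Gamma+\Phi(E)$. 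In particular $\Lambda_E\subseteq(\Gamma+\Phi(E))\cap\Lambda$, and the bound $m^\lambda\le M^\lambda\le\dim E$ is immediate from $\Poly^2(\mf n^-,E)\subseteq\Poly(\mf n^-,E)$ together with the fact that in $\Poly_{\b m}(\mf n^-)\otimes E$ the multiplicity of any $K$-type is at most $\dim E$ (the left factor being irreducible), and distinct $\b m$ contribute $K$-types with distinct ``$\Gamma$-part'', so no further accumulation occurs---this last point needs the standard fact that $\Gamma$ is a free commutative monoid on $\gamma_1,\dots,\gamma_r$, so that $\gamma_{\b m}+\Phi(E)=\gamma_{\b m'}+\Phi(E)$ forces $|\b m-\b m'|$ bounded.

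The heart of the statement is the semigroup description $\Lambda_E=\bigcup_{i=1}^s\lambda_i+\Gamma$. The key structural input is that $\Poly^2(\mf n^-,E)$ is a module over the ring $\Poly^2(\mf n^-)=\sN(X)$ of nearly holomorphic functions (this was used in the proof of Theorem~\ref{thm:existenceofnearlyholsec}): if $g\cdot E\subseteq\Poly^2(\mf n^-,E)$ for $g\in\Poly^2(\mf n^-)$, then multiplying by any $p_{\b m}$ with $\b m\in\NN^r_\geq$ keeps us in $\Poly^2(\mf n^-,E)$, because Theorem~\ref{thm:MinorIntegral} (for the trivial bundle, or directly) shows the $p_{\b m}$ lie in $\Poly^2(\mf n^-)$ and the relevant integrand estimates multiply. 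Translating to $K$-types: if $E_\lambda$ occurs in $\Poly^2(\mf n^-,E)$ then, using the Cartan product $\Poly_{\b m}(\mf n^-)\otimes E_\lambda\supseteq E_{\lambda+\gamma_{\b m}}$ realized by multiplication with the highest weight vector $p_{\b m}$, every $E_{\lambda+\gamma_{\b m}}$ with $\b m\in\NN^r_\geq$ also occurs in $\Poly^2(\mf n^-,E)$. Hence $\Lambda_E+\Gamma\subseteq\Lambda_E$, i.e.\ $\Lambda_E$ is a union of $\Gamma$-cosets. Combined with $\Lambda_E\subseteq(\Gamma+\Phi(E))\cap\Lambda$, finiteness of the number of cosets follows: modulo $\Gamma$ there are only finitely many classes represented in $\Gamma+\Phi(E)$ since $\Phi(E)$ is finite, so choosing one representative $\lambda_i$ in each class that actually meets $\Lambda_E$ gives the decomposition.

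The main obstacle I anticipate is making rigorous the claim that multiplication by $p_{\b m}$ maps the isotypic component $E_\lambda\subseteq\Poly^2(\mf n^-,E)$ into (a copy of) $E_{\lambda+\gamma_{\b m}}$ inside $\Poly^2(\mf n^-,E)$, rather than merely landing in $\Poly(\mf n^-,E)$. Concretely one must check: (i) that $p_{\b m}\cdot v_\lambda$ is nonzero when $v_\lambda$ is a $K$-highest weight vector of $E_\lambda$ (this is the nonvanishing of the Cartan product, standard since $p_{\b m}$ is a highest weight vector for $\Poly_{\b m}(\mf n^-)$); and (ii) that the $L^2$-condition is preserved---here one uses Proposition~\ref{prop:localpicturenorm} and the estimate $|p_{\b m}(\bar z^{-z})|^2\le\prod_i(t_i/(1+t_i^2))^{2m_r}\le 1$ from the proof of Theorem~\ref{thm:MinorIntegral}, so that multiplying the integrand by $|p_{\b m}(\bar z^{-z})|^2$ only decreases it, keeping the integral finite. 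Once (i) and (ii) are in hand, the rest is bookkeeping with the free monoid $\Gamma$ and the finite set $\Phi(E)$.
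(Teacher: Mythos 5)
Your overall strategy is the same as the paper's: transport the problem to $\Poly^2(\mf n^-,E)$ via Theorem~\ref{thm:UKequivalence}, use the Hua--Kostant--Schmid decomposition to locate $\Lambda_E$ inside $(\Gamma+\Phi(E))\cap\Lambda$, establish $\Lambda_E+\Gamma\subseteq\Lambda_E$ by multiplying with the highest weight vectors $p_\b m$ (the paper phrases this as $\sN(X,\sE)$ being an $\sN(X)$-module whose highest weights are exactly $\Gamma$, but your direct verification that the $L^2$-condition is preserved via $|p_\b m(\bar z^{-z})|\le 1$ is a correct unwinding of the same fact), and conclude finiteness of the coset decomposition from finiteness of $\Phi(E)$.

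There is, however, a genuine gap in your justification of $M^\lambda\le\dim E$. You argue that each $\Poly_\b m(\mf n^-)\otimes E$ contributes multiplicity at most $\dim E$ and that ``distinct $\b m$ contribute $K$-types with distinct $\Gamma$-part, so no further accumulation occurs.'' That last claim is false: a fixed highest weight $\lambda$ can be written as $\gamma_\b m+\nu$ for several distinct pairs $(\b m,\nu)$ with $\nu\in\Phi(E)$, so $E_\lambda$ can occur in $\Poly_\b m(\mf n^-)\otimes E$ for several distinct $\b m$. This actually happens; for $E=\mf n^-$, Theorem~\ref{thm:PolyDecomposition} shows that $M^{\gamma_\b m}$ equals the number of indices $i$ with $m_{i-1}>m_i$, i.e.\ it counts contributions from several distinct pieces of the Hua--Kostant--Schmid decomposition. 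As written, your argument only yields $M^\lambda\le|\Phi(E)|\cdot\dim E$. The paper closes this by invoking the sharper standard fact (cited from \cite{Ku10}) that the multiplicity of $E_{\gamma_\b m+\nu}$ in $\Poly_\b m(\mf n^-)\otimes E$ is at most $\dim E^\nu$, the dimension of the $\nu$-weight space of $E$; since for fixed $\lambda$ each $\nu\in\Phi(E)$ with $\lambda-\nu\in\Gamma$ determines $\b m$ uniquely, summing over $\nu$ gives $M^\lambda\le\sum_{\nu\in\Phi(E)}\dim E^\nu=\dim E$. A smaller remark: your final step of ``choosing one representative per class'' presumes that each $(\nu+\Gamma)\cap\Lambda_E$ is a single translate of $\Gamma$; the paper makes the same assertion by taking a minimal element, so you are at the same level of rigor, but strictly speaking one should note that a $\Gamma$-invariant subset of $\nu+\Gamma\cong\NN^r$ is a finite union of cosets by Dickson's lemma, which still yields the stated finiteness.
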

\begin{proof}
It is known that the highest weights $\lambda\in\Lambda_c$ occurring in the tensor product $\Poly_\b m(\mf n^-)\otimes E$ must be of the form $\lambda = \gamma_\b m + \nu$ for some weight $\nu\in\Phi(E)$, and that the multiplicity of $E_\lambda$ is less or equal to the dimension of the weight space $E^\nu\subseteq E$, see e.g.\ \cite{Ku10}. Since the Hua--Kostant--Schmid decomposition of $\Poly(\mf n^-)$ is multiplicity free, this implies that the multiplicities in $\Poly(\mf n^-)\otimes\mf n^-$ cannot exceed $\dim E$. Due to Theorem~\ref{thm:UKequivalence}, the highest weights of $L^2(X,\sE)$ must also be $\Phi^+$-dominant, hence we conclude that
$\Lambda_E$ is contained in $(\Gamma+\Phi(E))\cap\Lambda$. 
Moreover, recall that $\sN(X,\sE)$ is an $\sN(X)$ module \cite[Corollary~1.9]{S12a}, and since the highest weights of $\sN(X)$ are precisely given by $\Gamma$ (see Remark~3.8 in \cite{S12a}), it follows that $\lambda+\Gamma\subseteq\Lambda_E$ for all $\lambda\in\Lambda_E$. Finally, for fixed $\nu\in\Phi(E)$ consider the set $\Lambda_\nu = (\Gamma+\nu)\cap\Lambda_E$. If $\Lambda_\nu$ is non-empty, let $\lambda_\nu\in\Lambda_\nu$ be the minimal element with respect to the order relation $\lambda_\nu\leq\lambda_\nu'$ given by the condition that $\lambda_\nu'-\lambda_\nu\in\Gamma$. Then $\Lambda_\nu = \lambda_\nu+\Gamma$, and since $\Phi(E)$ is finite, this completes the proof.
\end{proof}

\subsection{Results on the $L^2$-condition}\label{sec:ResOnL2Cond}
In order to determine $\Poly^2(\mf n^-,E)$ we have to investigate the $L^2$-condition \eqref{eq:L2condition}, i.e., finiteness of the integral
\begin{align}\label{eq:integralcondition}
	I(p):=\int_{\mf n^+} \left|\rho(\B{z}{-\bar z}^{-\half})p(q(z))\right|^2
		\Delta(z,-\bar z)^{-p}d\lambda(z).
\end{align}
We first note the following necessary condition. Recall the definition of $e_1,\ldots,e_r\in\mf n^+$ in \eqref{eq:tripotents}.

\begin{proposition}\label{prop:necessarycondition}
	If $p\in\Poly^2(\mf n^-,E)$ is a weight vector of weight $\lambda\in\mf h^*$, then
	\begin{align}\label{eq:necessarycondition}
		\deg_{t_i} p(t_1\bar e_1+\cdots+t_r\bar e_r) \leq \lambda(H_{\gamma_i})
	\end{align}
	for all $i=1,\ldots, r$.
\end{proposition}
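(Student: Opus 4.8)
The idea is to exploit the $L^2$-condition together with the integral formula \eqref{eq:intformula} in polar coordinates to extract a bound on the degree of $p$ restricted to the diagonal $t_1\bar e_1+\cdots+t_r\bar e_r$. Suppose $p\in\Poly^2(\mf n^-,E)$ is a weight vector of weight $\lambda$. First I would recall that, by Proposition~\ref{prop:localpicturenorm}, finiteness of $\Norm{f_p}^2$ is equivalent to finiteness of the integral $I(p)$ in \eqref{eq:integralcondition}, and by Proposition~\ref{prop:BergmanEigenvalues} the factor $|\rho(\B{z}{-\bar z}^{-\half})p(q(z))|^2$ is, up to positive constants bounded away from $0$ and $\infty$ on compact sets, comparable to $|p(q(z))|^2\Delta(z,-\bar z)^{-c}$ for suitable $c$ — the precise constant will not matter because I only need a divergence criterion.

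**Reducing to a one-dimensional estimate.** Next I would use the polar decomposition \eqref{eq:polardecomposition} and the formula $\bar z_{\b t}^{-z_{\b t}} = \sum_i \tfrac{t_i}{1+t_i^2}\bar e_i$ to compute $p(q(kz_{\b t}))$. The crucial point: because $p$ is a weight vector of weight $\lambda$, and because the polar slice $z_{\b t}$ can be further degenerated, one can relate the behavior of $|p(q(z))|$ as $t_i\to\infty$ (with the other $t_j$ fixed or also going to infinity) to the degree $\deg_{t_i}p(t_1\bar e_1+\cdots+t_r\bar e_r)$. Concretely, if $d_i := \deg_{t_i}p(t_1\bar e_1+\cdots+t_r\bar e_r)$, then along the ray where only $t_i$ is large one gets $|p(\bar z_{\b t}^{-z_{\b t}})|^2 \sim t_i^{2d_i}(1+t_i^2)^{-2d_i}$ times terms in the other variables, while the weight structure forces $\rho(\B{z}{-\bar z}^{-\half})$ to contribute a factor behaving like $(1+t_i^2)^{-\lambda(H_{\gamma_i})}$ on the relevant weight component (using \eqref{eq:BergmanDecomp}, which expresses $\B{z_{\b t}}{-\bar z_{\b t}}$ as $\prod_i\exp(\ln(1+t_i^2)H_{\gamma_i})$, and the fact that $q(z)$ carries the weight). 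Combining with the Jacobian factor $t_i^{2b+1}$ from \eqref{eq:intformula} and the $\Delta(z,-\bar z)^{-p}$ weight $(1+t_i^2)^{-p}$, finiteness of the $t_i$-integral near infinity yields an inequality on exponents; after substituting $p = 2+a(r-1)+b$ and simplifying, the surviving constraint should be exactly $d_i \le \lambda(H_{\gamma_i})$.

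**The main obstacle.** The delicate part is making the ``weight vector'' hypothesis do the right work: one has to check that $q(z) = \bar z^{-z}$ interacts with the $K$-action and the torus $\exp(\mathbb R H_{\gamma_i})$ so that the restriction $p(t_1\bar e_1+\cdots+t_r\bar e_r)$ transforms with the correct power of $t_i$, and that the operator $\rho(\B{z_{\b t}}{-\bar z_{\b t}}^{-\half})$ applied to $p(q(z_{\b t}))$ does not introduce cancellations hiding the top-degree term — i.e., that the leading $t_i$-behavior is genuinely present rather than killed by the Bergman factor. I expect this to reduce to a statement that the top-degree part of $p$ in $t_i$ has a nonzero component in a weight space on which $\B{z_{\b t}}{-\bar z_{\b t}}$ acts with the extremal eigenvalue $\lambda(H_{\gamma_i})$, which is where the weight-vector assumption enters essentially. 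Once that positivity/non-vanishing is established, the rest is a routine divergence analysis of a product of powers of $t_i$ and $(1+t_i^2)$ against the Selberg-type measure in \eqref{eq:intformula}, integrating out the remaining variables and noting that divergence in a single variable already forces $I(p)=\infty$.
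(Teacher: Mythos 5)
Your overall strategy---polar coordinates, restriction to the slice $z=z_{\b t}$, using the weight-vector hypothesis to make the Bergman factor tractable, and a degree count against the Selberg-type measure---is indeed the paper's strategy, but the two steps you flag as delicate are exactly where the content lies, and as written your asymptotics would not produce the inequality. First, the exponent bookkeeping: you claim the integrand behaves like $t_i^{2d_i}(1+t_i^2)^{-2d_i}\cdot(1+t_i^2)^{-\lambda(H_{\gamma_i})}$. This double-counts. The clean resolution is the identity
\[
  \rho\big(\B{z}{-\bar z}^{-\half}\big)\,p(q(z))=\big(\tau(\B{z}{-\bar z}^{-\half})p\big)(\bar z),
  \qquad (\tau(h)p)(y):=\rho(h)p(h^{-1}y),
\]
valid because $q(z)=\B{\bar z}{-z}^{-\half}\bar z$ and $\B{z}{-\bar z}$ acts on $\mf n^-$ by $\B{\bar z}{-z}^{-1}$. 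Since $p$ is a $\tau$-weight vector of weight $\lambda$ and $\B{z_{\b t}}{-\bar z_{\b t}}=\prod_i\exp(\ln(1+t_i^2)H_{\gamma_i})$, the right-hand side equals $\prod_i(1+t_i^2)^{-\lambda(H_{\gamma_i})/2}\,p(\bar z_{\b t})$: the Bergman operator acts on the coefficient of each monomial with a weight-dependent eigenvalue that exactly cancels the factor $(1+t_i^2)^{-j_i}$ coming from $q(z_{\b t})=\sum_i\tfrac{t_i}{1+t_i^2}\bar e_i$, leaving $t_i^{j_i}(1+t_i^2)^{-\lambda(H_{\gamma_i})/2}$ per monomial. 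Hence the integrand grows like $t_i^{2d_i}(1+t_i^2)^{-\lambda(H_{\gamma_i})}$ --- the top-degree term genuinely dominates at infinity --- whereas your spurious extra factor $(1+t_i^2)^{-2d_i}$ would make the $t_i$-integral converge for every $d_i$ and yield no constraint at all. Note also that your opening reduction via the two-sided bound of Proposition~\ref{prop:BergmanEigenvalues} cannot work even in principle: those exponents are $\mu(H_{\alpha_1})$ and $\mu(H_{\gamma_1})$, extremal weights of $E$, and can never produce the weight $\lambda$ of $p$ itself.

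Second, the assertion that ``divergence in a single variable already forces $I(p)=\infty$'' is unjustified: the slice $k=\Id$ has measure zero in $K$, so Fubini only gives finiteness of the $t_i$-integral for almost every $k$, and divergence at the single point $k=\Id$ proves nothing by itself. The paper closes this gap by writing the integrand as $g_k(\b t)\prod_i(1+t_i^2)^{-m}\,\omega(\b t)$ with $g_k$ a polynomial depending continuously on $k$, so that finiteness of the $t_i$-integral is a degree condition on $g_k$ which is lower semicontinuous in $k$; finiteness for almost every $k$ therefore forces finiteness at $k=\Id$ (equivalently, divergence at $k=\Id$ propagates to an open neighbourhood of $\Id$). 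Some version of this continuity argument is needed to legitimize working on the diagonal slice.
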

\begin{proof}
Let $\tau$ denote the representation of $L$ on $\sP^2(\mf n^-,E)$, i.e., $(\tau(h)p)(y) = \rho(h)p(h^{-1}y)$ for $h\in L$. Since $q(z) = \bar z^{-z} = \B{\bar z}{-z}^{-\half}\bar z$ and since $\B{z}{-\bar z}\in L$ acts on $\mf n^-$ by $\B{\bar z}{-z}^{-1}$, it follows that
\[
	\rho(\B{z}{-\bar z}^{-\half})p(q(z)) = \big(\tau(\B{z}{-\bar z}^{-\half})p\big)(z).
\]
Furthermore, using the polar decomposition $z = k z_\b t$ as in \eqref{eq:polardecomposition}, we obtain
\[
	I(p)=\int_{K\times[0,\infty)^r}
			 \left|\big(\tau(\B{z_\b t}{-\bar z_\b t}^{-\half})p_k\big)(\bar z_\b t)\right|^2
			 \omega(\b t)\, dkdt_1\cdots dt_r <\infty,
\]
where 
\[
	p_k:=\tau(k^{-1})p\qquad\text{and}\qquad
	\omega(\b t):=\prod_{i=1}^r\frac{t_i^{2b+1}}{(1+t_i^2)^p}\prod_{i<j}|t_i^2-t_j^2|^a.
\]
By Fubini's theorem the integral over each variable $t_i$ is finite for almost all fixed values of $k$ and $t_j$, $j\neq i$. However, since 
\[
	\tau(\B{z_\b t}{-\bar z_\b t}^{-\half}) = \prod_{i=1}^r\exp(\ln(1+t_i^2)\,d\tau(H_{\gamma_i}))
\]
it follows that the integrand of $I(p)$ is of the form
\[
	\frac{g_k(\b t)}{\prod_i(1+t_i^2)^m}\,\omega(\b t)
\]
for some $m\in\NN$, and $g_k$ is a real polynomial on $\RR^r$ depending continuously on $k\in K$. Therefore, for generic $t_j$, $j\neq i$, the finiteness of the integral over $t_i$ is just a condition on the $t_i$-degree of $g_k$, which depends lower semi-continuously on $k$. Hence, the finiteness of the integral over $t_i$ is independent of the choice of $k$. Choosing $k=\Id$, we obtain $p_{\Id} = p$, and by assumption $\tau(\B{z_\b t}{-\bar z_\b t}^{-\half})p = \prod_{i=1}^r (1+t_i^2)^{-\lambda(H_{\gamma_i})/2}p$. Now the condition
\[
	\int_{[0,\infty)}|p(\bar z_\b t)|^2
	(1+t_i^2)^{-\lambda(H_{\gamma_i})-p}\,t_i^{2b+1}\,\prod_{j\neq i}|t_j^2-t_i^2|^a\,dt_i<\infty
\]
implies
\[
	2\,\deg_{t_i} p  - 2\lambda(H_{\gamma_i})-2p + 2b+1+2a(r-1) < -1,
\]
which is equivalent to $\deg_{t_i} p\leq\lambda(H_{\gamma_i})$, since $p = 2+ a(r-1) + b$ and since $\lambda(H_{\gamma_i})$ is an integer.
\end{proof}

In combination with Theorem~\ref{thm:MinorIntegral}, Proposition~\ref{prop:necessarycondition} yields the following partial result concerning the explicit description of $\Poly^2(\mf n^-,E)$. Recall that the polynomial $p_\b m(y)$ defined in \eqref{eq:JordanMinors} is the highest weight vector of the $K$-type $\Poly_\b m(\mf n^-)$, see \cite{Up86}.

\begin{corollary}\label{cor:propconverse}
	For $\b m\in\NN^r_\geq$, 
	\[
		\Poly_\b m(\mf n^-)\otimes E\subseteq\Poly^2(\mf n^-,E)
	\]
	if and only if $m_r+\mu(H_{\alpha_1})\geq 0$.
\end{corollary}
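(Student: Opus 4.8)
The plan is to combine the sufficiency result from Theorem~\ref{thm:MinorIntegral} with the necessary condition from Proposition~\ref{prop:necessarycondition} applied to a suitably chosen weight vector. For the ``if'' direction there is nothing new to do: Theorem~\ref{thm:MinorIntegral} states precisely that $m_r+\mu(H_{\alpha_1})\geq 0$ implies $p_\b m(y)\cdot E\subseteq\Poly^2(\Vp\times\Vm,E)$, and since $p_\b m$ is constant along $\mf n^+$ this gives $p_\b m(y)\cdot E\subseteq\Poly^2(\Vm,E)$. Because $p_\b m$ is the highest weight vector of the $K$-type $\Poly_\b m(\mf n^-)\subseteq\Poly(\mf n^-)$ and $\Poly^2(\mf n^-,E)$ is a $K$-submodule of $\Poly(\mf n^-)\otimes E\cong\Poly(\mf n^-,E)$, the submodule generated by $p_\b m(y)\cdot E$ is all of $\Poly_\b m(\mf n^-)\otimes E$; hence $\Poly_\b m(\mf n^-)\otimes E\subseteq\Poly^2(\mf n^-,E)$.

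For the ``only if'' direction, I would argue contrapositively: assume $m_r+\mu(H_{\alpha_1})<0$ and produce a weight vector in $\Poly_\b m(\mf n^-)\otimes E$ violating the necessary inequality \eqref{eq:necessarycondition}. The natural candidate is $p\coloneqq p_\b m(y)\otimes v_{-\mu}$, where $v_{-\mu}$ is a lowest weight vector of $E$; here one uses that for an irreducible $K$-module $E$ with $\Phi^+_c$-highest weight $\mu$, the lowest weight is $w_0.\mu$ where $w_0$ is the longest element of $W_c$, and $w_0.\mu$ evaluated at $H_{\gamma_1}$ equals $\mu(H_{w_0^{-1}\gamma_1})=\mu(H_{\alpha_1})$ using the relation $\gamma_1=w_0\alpha_1$ already recorded in the proof of Proposition~\ref{prop:BergmanEigenvalues}. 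Thus $p$ is a weight vector of weight $\lambda\coloneqq\gamma_\b m+w_0.\mu$ with $\lambda(H_{\gamma_1})=m_1+\mu(H_{\alpha_1})$. If this vector were in $\Poly^2(\mf n^-,E)$, Proposition~\ref{prop:necessarycondition} with $i=1$ would force $\deg_{t_1}p_\b m(t_1\bar e_1+\cdots+t_r\bar e_r)\leq m_1+\mu(H_{\alpha_1})$. But the leading term of $p_\b m$ in $\bar e_1$ is governed by $m_1$: specializing $y=t_1\bar e_1+\cdots+t_r\bar e_r$ makes each $\Delta_i$ a product $\prod_{j\leq i}(1-t_j)$ up to a unit, so $p_\b m$ becomes $\prod_{j=1}^r(1-t_j)^{m_j}$ (using $\epsilon_i=e_1+\cdots+e_i$ and the standard Peirce computation of $\Delta(\epsilon_i,\bar\epsilon_i-y)$), which has $t_1$-degree exactly $m_1$. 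Since $m_1\geq m_r>m_r+\mu(H_{\alpha_1})$ is not automatic --- one only knows $m_1\geq\cdots\geq m_r$ --- I would instead invoke the inequality $m_1\geq m_r$ together with the hypothesis $m_r+\mu(H_{\alpha_1})<0\leq m_1-m_r$, giving $m_1>m_1+\mu(H_{\alpha_1})\geq m_r+\mu(H_{\alpha_1})$; more simply, $m_1\geq m_r$ and $\mu(H_{\alpha_1})<-m_r\leq 0$ give $m_1+\mu(H_{\alpha_1})<m_1$, hence the required degree $m_1$ strictly exceeds the allowed bound $m_1+\mu(H_{\alpha_1})$, contradiction.

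Actually the cleanest route avoids the subtraction game: one only needs \emph{some} $i$ for which \eqref{eq:necessarycondition} fails, and here $i=r$ is the efficient choice if one instead takes the weight vector $p_\b m(y)\otimes v_{-\mu}$ and evaluates $\lambda(H_{\gamma_r})$. Since each $w_i\in W_c$ with $w_i\gamma_1=\gamma_i$ satisfies (by the argument in Proposition~\ref{prop:BergmanEigenvalues}) $(w_0.\mu)(H_{\gamma_i})\geq\mu(H_{\alpha_1})$ with possible strict inequality, a safer choice of weight vector is the extremal vector $v$ of $E$ with $\langle v\rangle$ being the line fixed so that its $H_{\gamma_r}$-weight is exactly $\mu(H_{\alpha_1})$ — such $v$ exists because $\mu(H_{\alpha_1})$ is attained among the values $(w.\mu)(H_{\gamma_r})$, $w\in W_c$, again by the Weyl-group argument already in the text. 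Then $p_\b m(y)\otimes v$ has weight $\lambda$ with $\lambda(H_{\gamma_r})=m_r+\mu(H_{\alpha_1})<0$, while $\deg_{t_r}p_\b m(t_1\bar e_1+\cdots+t_r\bar e_r)=m_r\geq 0>\lambda(H_{\gamma_r})$, directly contradicting \eqref{eq:necessarycondition}. The main obstacle, and the only place where care is genuinely needed, is the bookkeeping identifying the correct $W_c$-translate of $\mu$ realizing the weight $\mu(H_{\alpha_1})$ on $H_{\gamma_r}$ and the explicit Peirce computation that $p_\b m$ restricted to $\sum t_i\bar e_i$ has $t_r$-degree exactly $m_r$; both are standard but must be stated precisely. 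I would cite \cite{Lo77} for the Peirce calculation and reuse verbatim the $W_c$-argument from the proof of Proposition~\ref{prop:BergmanEigenvalues}.
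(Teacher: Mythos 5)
Your overall strategy is the same as the paper's: the ``if'' direction is Theorem~\ref{thm:MinorIntegral} plus $K$-invariance of $\Poly^2(\mf n^-,E)$, and the ``only if'' direction applies Proposition~\ref{prop:necessarycondition} with $i=r$ to $p_{\b m}\otimes v$ for a weight vector $v$ whose weight $\nu$ satisfies $\nu(H_{\gamma_r})=\mu(H_{\alpha_1})$ (which exists because $\gamma_r$ and $\alpha_1$ lie in the same $W_c$-orbit and $\Phi(E)$ is $W_c$-stable). You were also right to abandon the $i=1$ attempt with the lowest weight vector: the hypothesis $m_r+\mu(H_{\alpha_1})<0$ does not force $m_1+\mu(H_{\alpha_1})<0$, so that route does not close.

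There is, however, a computational slip in your final paragraph that makes the stated contradiction literally false as written. The weight of $p_{\b m}\otimes v$ is $\lambda=\gamma_{\b m}+\nu$, and by strong orthogonality $\gamma_{\b m}(H_{\gamma_r})=m_r\,\gamma_r(H_{\gamma_r})=2m_r$, so
\[
  \lambda(H_{\gamma_r})=2m_r+\mu(H_{\alpha_1}),
\]
not $m_r+\mu(H_{\alpha_1})$ as you claim. Consequently your inequality ``$m_r\geq 0>\lambda(H_{\gamma_r})$'' can fail (e.g.\ $m_r=5$, $\mu(H_{\alpha_1})=-6$ gives $\lambda(H_{\gamma_r})=4\geq 0$). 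The argument survives the correction: Proposition~\ref{prop:necessarycondition} requires $\deg_{t_r}p_{\b m}=m_r\leq\lambda(H_{\gamma_r})=2m_r+\mu(H_{\alpha_1})$, which is \emph{exactly} the inequality $m_r+\mu(H_{\alpha_1})\geq 0$; this is how the paper concludes. So you should replace the erroneous evaluation and the ensuing chain of inequalities by this one line. A second, harmless, slip: on the frame one has $\Delta_i(\sum_j t_j\bar e_j)=\prod_{j\leq i}t_j$, hence $p_{\b m}(\sum_j t_j\bar e_j)=\prod_j t_j^{m_j}$ rather than $\prod_j(1-t_j)^{m_j}$; either way $\deg_{t_r}=m_r$, which is all that is used.
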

\begin{proof}
First assume that $m_r+\mu(H_{\alpha_1})\geq 0$. Then Theorem~\ref{thm:MinorIntegral} implies $p_\b m\cdot E\subseteq\Poly^2(\mf n^-,E)$, and since $\Poly^2(\mf n^-,E)$ is $K$-invariant, it follows that $\sP_\b m(\mf n^-)\otimes E\subseteq\Poly^2(\mf n^-,E)$. For the converse, consider the polynomial $p_\b m(y)\cdot v\in\Poly^2(\mf n^-,E)$, where $v\in E$ is a weight vector of $E$ of weight $\nu\in\Phi(E)$. Then $p_\b m(y)\cdot v$ is a weight vector of weight $\gamma_\b m + \nu$, and Proposition~\ref{prop:necessarycondition} implies that
\[
	m_r = \deg_{t_r}(p_\b m\cdot v) \leq (\gamma_\b m + \nu)(H_{\gamma_r})
		= 2m_r + \nu(H_{\gamma_r}).
\]
We may choose $\nu$ such that $\nu(H_{\gamma_r}) = \mu(H_{\alpha_1})$, since $\gamma_r$ and $\alpha_1$ are elements of the same $W_c$-orbit, where $W_c$ is the Weyl group of $\Phi_c$, see also the proof of Proposition~\ref{prop:BergmanEigenvalues}. This completes the proof.
\end{proof}

\begin{remark}
	For rank-1 Hermitian symmetric spaces, i.e., for complex projective spaces $X=\PP^n$, 
	Corollary~\ref{cor:propconverse} determines $\Poly^2(\mf n^-,E)$ inside $\Poly(\mf n^-,E)$ 
	up to finitely many $K$-types.
\end{remark}

So far do not have enough insight into the $L^2$-condition to determine $\Poly^2(\mf n^-,E)$ more precisely. For the case of line bundles, Corollary~\ref{cor:propconverse} provides enough information for a complete description of $\Poly^2(\mf n^-,E)$ (see below). In Section~\ref{subsec:holCotanBundle} we briefly discuss the holomorphic cotangent bundle as an example of a higher rank vector bundle.

\subsection{Application to line bundles}\label{subsec:LineBundles}
Recall that $G$-homogeneous line bundles on $X$ correspond to 1-dimensional representations of $P$, which are parame\-trized by integer multiples of the fundamental weight $\lambda_1\in\Lambda_c$ associated to $\alpha_1$, i.e., $\lambda_1(H_{\alpha_i}) = \delta_{1i}$. For the following, we fix $k\in\ZZ$ and consider the line bundle $\linebundle_k$ associated to the character $\rho_k$ with highest weight $\mu_k:=k\cdot\lambda_1$.

Since $E$ is 1-dimensional the tensor product $\Poly_\b m(\mf n^-)\otimes E$ is an irreducible $K$-module of highest weight $\gamma_\b m + \mu_k$. In this case, Corollary~\ref{cor:propconverse} provides a precise description of the $K$-types satisfying the $L^2$-condition. Applying Theorem~\ref{thm:UKequivalence} to this setting, we therefore obtain the following  characterization of the highest weights in $L^2(X,\linebundle_k)$.

\begin{theorem}\label{thm:linebundledecomposition}
	The decomposition of $L^2(X,\linebundle_k)$ into $U$-types is multiplicity 
	free, and the highest weights occurring in this decomposition are given by
	\[
		\Lambda_k
			:= \set{\lambda_{k,\b m} := \gamma_\b m + \mu_k}
						{\b m\in\NN^r_\geq,\ m_r\geq -k}\,.
	\]
	On $\mf n^+\subseteq X$, the highest weight vector corresponding to
	$\lambda_{k,\b m}\in\Lambda_k$ is given by
	\[
		f_\b m(z) = p_\b m(q(z))
	\]
	with $q(z) = \bar z^{-z}$.
\end{theorem}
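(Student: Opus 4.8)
The plan is to derive Theorem~\ref{thm:linebundledecomposition} as a direct specialization of the general machinery already in place, namely Theorem~\ref{thm:UKequivalence} together with the explicit description of $\Poly^2(\mf n^-,E)$ for line bundles provided by Corollary~\ref{cor:propconverse}. Since $E$ is one-dimensional with highest weight $\mu_k = k\lambda_1$, the representation $\rho_k$ is trivial on the semisimple part of $L$, so as a $K$-module $\Poly(\mf n^-,E)\cong\Poly(\mf n^-)\otimes E$ and the Hua--Kostant--Schmid decomposition \eqref{eq:HKSDecomposition} reads $\Poly(\mf n^-)\otimes E = \bigoplus_{\b m\in\NN^r_\geq}\Poly_\b m(\mf n^-)\otimes E$, where each summand $\Poly_\b m(\mf n^-)\otimes E$ is \emph{irreducible} of highest weight $\gamma_\b m + \mu_k$ (tensoring an irreducible $K$-type by a one-dimensional module is irreducible). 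In particular this decomposition is multiplicity free.

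First I would invoke Corollary~\ref{cor:propconverse} with $E$ the one-dimensional module of highest weight $\mu_k$: it states that $\Poly_\b m(\mf n^-)\otimes E\subseteq\Poly^2(\mf n^-,E)$ if and only if $m_r + \mu_k(H_{\alpha_1})\geq 0$. Since $\mu_k = k\lambda_1$ and $\lambda_1(H_{\alpha_1}) = 1$, this condition is exactly $m_r\geq -k$. Hence
\[
	\Poly^2(\mf n^-,E) = \bigoplus_{\substack{\b m\in\NN^r_\geq\\ m_r\geq -k}}\Poly_\b m(\mf n^-)\otimes E,
\]
which is a multiplicity-free $K$-module. Next I would apply Theorem~\ref{thm:UKequivalence}: it gives a bijection between the $U$-types in $L^2(X,\linebundle_k)$ and the $K$-types in $\Poly^2(\mf n^-,E)$, with each $K$-type of highest weight $\lambda$ arising as $V_\mu^{\mf n^+}$ for the $U$-type $V_\mu$ of the same highest weight $\mu$, and with equal multiplicities $m^\lambda = M^\lambda$ in the notation preceding Corollary~\ref{cor:multiplicitybounds}. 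Since $\Poly^2(\mf n^-,E)$ is multiplicity free, so is $L^2(X,\linebundle_k)$, and its highest weights are precisely $\{\gamma_\b m + \mu_k : \b m\in\NN^r_\geq,\ m_r\geq -k\} = \Lambda_k$.

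It remains to identify the highest weight vectors on $\mf n^+$. Here I would trace through the isomorphism $\iota_\sP$ from Theorem~\ref{thm:existenceofnearlyholsec}: the $K$-highest weight vector of $\Poly_\b m(\mf n^-)\otimes E$ is $p_\b m(y)\cdot v_0$, where $p_\b m$ is the Jordan minor \eqref{eq:JordanMinors} (a $K$-highest weight vector in $\Poly(\mf n^-)$ by the cited result of Upmeier) and $v_0$ spans $E$; under $\iota_\sP$ this is sent to the section whose restriction to $\mf n^+$ is $z\mapsto p_\b m(q(z))$ with $q(z)=\bar z^{-z}$. One should check, using Proposition~\ref{prop:uCactiononlocalnearlyhol}, that this element is genuinely a $U$-highest weight vector of weight $\lambda_{k,\b m}$, i.e.\ that it is annihilated by $\mf n^+$ and is a $K$-highest weight vector; the latter is immediate from $K$-equivariance of $\iota_\sP$ and the fact that $p_\b m$ is $K$-highest, while the former follows from the first displayed formula in Proposition~\ref{prop:uCactiononlocalnearlyhol}, which shows $d\pi_\CC(v)f_{p_\b m} = -\del_{(v,0)}p_\b m|_{(z,q(z))} = 0$ since $p_\b m$ is constant along $\mf n^+$. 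The main (and essentially only) obstacle is bookkeeping: making sure the one-dimensionality of $E$ is correctly used to conclude irreducibility of the tensor-product summands and to read $\mu_k(H_{\alpha_1}) = k$ off the normalization $\lambda_1(H_{\alpha_i}) = \delta_{1i}$; once that is pinned down, the result falls out of the already-established theorems with no further analytic input.
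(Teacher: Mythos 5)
Your proof is correct and follows essentially the same route as the paper: one-dimensionality of $E$ makes each $\Poly_{\b m}(\mf n^-)\otimes E$ an irreducible $K$-type of highest weight $\gamma_{\b m}+\mu_k$, Corollary~\ref{cor:propconverse} (with $\mu_k(H_{\alpha_1})=k$) selects exactly the summands with $m_r\geq -k$, and Theorem~\ref{thm:UKequivalence} transfers this multiplicity-free $K$-decomposition to the $U$-type decomposition of $L^2(X,\linebundle_k)$. Your verification that $p_{\b m}(q(z))$ is a $U$-highest weight vector via Proposition~\ref{prop:uCactiononlocalnearlyhol} matches the construction used in the surjectivity argument of Theorem~\ref{thm:UKequivalence}.
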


\begin{remark}
	This characterization of highest weights in $L^2(X,\linebundle_k)$ was first proved by 
	Schichtkrull \cite{Sch84}. For a comparison, we note that Schlichtkrull uses the reverse order 
	of our system of strongly orthogonal roots, i.e., $\tilde\gamma_i:=\gamma_{r-i+1}$ for 
	$i=1,\ldots, r$, and the parameters 
	\[
		\tilde{\b m}(\lambda)=(\tilde m_1,\ldots,\tilde m_r)\quad\text{with}\quad\tilde m_i
			:= \frac{2\,\langle\lambda|\tilde\gamma_i\rangle}
							 {\langle\tilde\gamma_i|\tilde\gamma_i\rangle}
	\]
	for the description of highest weights in $\Lambda_k$. For $\lambda = \lambda_{k,\b m}$ it 
	easily follows that $\tilde m_i = 2\,m_{r-i+1} + k$. Therefore, 
	Theorem~\ref{thm:linebundledecomposition} states that $\Lambda_k$ consists of those 
	$\lambda\in\mf h^*$ that vanish on $\mf h_+\cap\mf l_\ss\subseteq\mf h$ (where $\mf h_+ = 
	\Set{H\in\mf h}{\gamma_i(H) = 0\text{ for all $i$}}$) and for 
	which $\tilde{\b m}:=\tilde{\b m}(\lambda)$ satisfies $\tilde{\b m}\in\NN^r$ and
	\[
		|k|\leq \tilde m_1\leq\tilde m_2\leq\cdots\leq\tilde m_r,\quad
		(-1)^k = (-1)^{\tilde m_1} = \cdots = (-1)^{\tilde m_r},
	\]
	cf.\ Proposition~7.1 and Theorem~7.2 in \cite{Sch84}.
\end{remark}

\begin{remark}
	We also note that if $X$ is of tube-type 
	(and only in this case), $\lambda_1$ is a linear combination of the strongly orthogonal roots 
	$\gamma_i$. More precisely, $\lambda_1 = \tfrac{1}{2}\sum_i\gamma_i$, and we obtain 
	\[
		\Lambda_k
			= \Set{\gamma_{\b m}}
				{\b m\in \tfrac{1}{2}\NN_\geq^r,\ m_r\geq\tfrac{|k|}{2},\ 
					(-1)^k = (-1)^{2m_1}=\cdots=(-1)^{2m_r}}.
	\]
\end{remark}

\subsection{Application to the holomorphic cotangent bundle}\label{subsec:holCotanBundle}
As an application of our results to higher rank vector bundles, we consider the holomorphic cotangent bundle $T^{(1,0)*}$ of $X$, which corresponds to the irreducible representation of $P$ on $\mf n^-$ given by the adjoint action, so $T^{(1,0)*} = G\times_P\mf n^-$ with $\rho(h) = \Ad_h$ for $h\in P$. As before, we set $hv:=\Ad_hv$ for $h\in L$, $v\in\mf n^-$. The Hermitian structure on $T^{(1,0)*}$ is induced from the $K$-invariant inner product on $\mf n^-$ given by $(v|w):=-\kappa(v,\bar w)$, where $\kappa$ denotes the Killing form of $\mf g$. The highest weight of $\rho$ with respect to $\Phi^+_c$ is $\mu = -\alpha_1$. Since $\mu(H_{\alpha_1}) = -2$, the trivial section is the only holomorphic one, see Corollary~\ref{cor:BorelWeil}, and $\Poly^2(\mf n^-,\mf n^-)$ is a proper subset of $\Poly(\mf n^-,\mf n^-)$, see Corollary~\ref{cor:propconverse}. 

According to Corollary~\ref{cor:multiplicitybounds} it suffices to determine the $K$-type decomposition of $\Poly(\mf n^-,\mf n^-)$ in order to obtain bounds for the multiplicities of the $U$-type decomposition of $L^2(X,\sE)$. Recall the decomposition
\[
	\Poly(\mf n^-,\mf n^-)=\bigoplus_{\b m\in\NN^r_\geq}\Poly_\b m(\mf n^-)\otimes\mf n^-
\]
from \eqref{eq:HKSDecomposition}. Essentially the same arguments as used for the decomposition of $\Poly_\b m(\mf n^-)\otimes\mf n^+$ in \cite[Proposition~3.15]{S12a} yield the following result. Recall that $\Delta_c$ denotes the set of simple compact roots, and that the set of weights of $\mf n^-$ coincides with the set of non-compact negative roots in $\Phi$, denoted by $\Phi_{nc}^-$.

\begin{lemma}\label{lem:tensordecomp}
	For $\b m\in\NN^r_\geq$, the $K$-type decomposition of the tensor product
	$\Poly_\b m(\Vm)\otimes\mf n^-$ is given by
	\[
		\Poly_\b m(\Vm)\otimes\mf n^- = \bigoplus_{\lambda\in\Lambda_\b m(\mf n^-)} E_\lambda
	\]
	where
	\begin{align}\label{eq:CotangentBundleHeighestWeights}
		\Lambda_\b m(\mf n^-)
		:= \left(\gamma_\b m + \Set{\beta\in\Phi_{nc}^-}
			{\begin{aligned}
				&\beta+\alpha\notin\Phi_{nc}^- \text{ for all}\\ 
				&\alpha\in\Delta_c\text{ with }\gamma_\b m(H_{\alpha})=0\end{aligned}}
			\right)\cap\Lambda_c.
	\end{align}
	In particular, $\gamma_{\b m-e_j}\in\Lambda_\b m(\mf n^-)$ if and only if $m_j>m_{j+1}$. Here, 
	$m_{r+1}:=0$.
\end{lemma}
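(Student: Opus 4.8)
The plan is to decompose the tensor product $\Poly_\b m(\Vm)\otimes\mf n^-$ using the description of weights appearing in a tensor product with a fundamental-type module, exactly as in the proof of \cite[Proposition~3.15]{S12a} for $\Poly_\b m(\Vm)\otimes\mf n^+$. First I would recall that $\Poly_\b m(\Vm)$ is the irreducible $K$-module with highest weight $\gamma_\b m = m_1\gamma_1+\cdots+m_r\gamma_r$, and that the set of weights of $\mf n^-$ is $\Phi_{nc}^-$, with the highest weight (with respect to $\Phi_c^+$) being the lowest non-compact root, which I will call $\beta_{\max}$. Since $\mf n^-$ is a ``small'' representation (its weights form a single $W_c$-orbit, each with multiplicity one), the PRV/Kostant-type description of tensor products specializes nicely: every $K$-type in $\Poly_\b m(\Vm)\otimes\mf n^-$ has highest weight of the form $\gamma_\b m+\beta$ for some weight $\beta\in\Phi_{nc}^-$ of $\mf n^-$, and it occurs with multiplicity at most the dimension of the weight space, which is one. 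So the decomposition is multiplicity free, and the only issue is to decide, for each $\beta\in\Phi_{nc}^-$, whether $\gamma_\b m+\beta$ actually occurs.

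The key step is the combinatorial criterion for occurrence. The standard fact (used verbatim in \cite[Proposition~3.15]{S12a}) is that $\gamma_\b m+\beta$ is a highest weight of the tensor product precisely when $\gamma_\b m+\beta$ is $\Phi_c^+$-dominant \emph{and} $\beta$ is extreme in $\mf n^-$ in a way compatible with the stabilizer of $\gamma_\b m$ inside $W_c$: concretely, for every simple compact root $\alpha\in\Delta_c$ with $\gamma_\b m(H_\alpha)=0$ one needs $\beta+\alpha\notin\Phi_{nc}^-$, i.e. $\beta$ cannot be raised by any simple reflection fixing $\gamma_\b m$. This is exactly the condition appearing in \eqref{eq:CotangentBundleHeighestWeights}. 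I would establish this by the same argument as in \cite{S12a}: a highest weight vector of weight $\gamma_\b m+\beta$ in the tensor product must be a highest weight vector $v_{\gamma_\b m}\otimes u_\beta$ (up to lower-order corrections) where $u_\beta$ spans the $\beta$-weight line of $\mf n^-$; applying $X_\alpha$ for $\alpha\in\Delta_c$ and using $X_\alpha v_{\gamma_\b m}=0$, the vector is annihilated exactly when either $X_\alpha u_\beta=0$ (i.e. $\beta+\alpha$ is not a weight of $\mf n^-$) or the correction terms can be arranged, which by the theory of such ``small'' tensor products happens precisely under the stated stabilizer condition. Intersecting with $\Lambda_c$ enforces $\Phi_c^+$-dominance of $\gamma_\b m+\beta$, giving exactly $\Lambda_\b m(\mf n^-)$.

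For the final ``in particular'' claim, I would note that $\gamma_{\b m - e_j} = \gamma_\b m - \gamma_j$, so I must check whether $\beta := -\gamma_j \in \Phi_{nc}^-$ satisfies the two conditions. Since $-\gamma_j$ is a non-compact negative root it is automatically a weight of $\mf n^-$; then one checks that $\gamma_\b m - \gamma_j$ is $\Phi_c^+$-dominant, and that the raising condition ``$-\gamma_j + \alpha \notin \Phi_{nc}^-$ for all $\alpha\in\Delta_c$ with $\gamma_\b m(H_\alpha)=0$'' translates, via the strong orthogonality of the $\gamma_i$ and standard properties of the $\gamma_i$ relative to $\Delta_c$, into the inequality $m_j > m_{j+1}$ (with $m_{r+1}=0$). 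The dominance of $\gamma_\b m-\gamma_j$ on each simple compact coroot and the membership/non-membership of $-\gamma_j+\alpha$ in $\Phi_{nc}^-$ are precisely the root-combinatorial facts about the Harish-Chandra strongly orthogonal roots that are spelled out in \cite{Lo77}; I would cite these rather than reprove them.

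The main obstacle I anticipate is making the occurrence criterion airtight — i.e., proving that the condition ``$\beta + \alpha \notin \Phi_{nc}^-$ for all $\alpha \in \Delta_c$ with $\gamma_\b m(H_\alpha) = 0$'' is both necessary and sufficient for $\gamma_\b m + \beta$ to be a genuine highest weight of the tensor product, rather than just a dominant weight of it. Necessity (dominance plus the raising obstruction) is the easy direction; sufficiency requires knowing that no cancellation occurs, which is where one really uses that $\mf n^-$ is minuscule-like for $K$ (weights form one $W_c$-orbit with multiplicity one). Since the completely parallel statement for $\mf n^+$ is already proved in \cite[Proposition~3.15]{S12a} and $\mf n^-$ is the $K$-dual of $\mf n^+$, I expect to discharge this by invoking that proof essentially verbatim, with $\Phi_{nc}^+$ replaced by $\Phi_{nc}^-$ throughout; the bulk of the write-up will therefore be the translation and the verification of the concrete ``$m_j > m_{j+1}$'' reformulation.
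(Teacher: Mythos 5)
Your proposal follows the paper's treatment exactly: the paper offers no independent proof of this lemma, deferring entirely to the argument for $\Poly_\b m(\Vm)\otimes\mf n^+$ in \cite[Proposition~3.15]{S12a}, and you propose to transplant that same argument (Kostant-type bound on highest weights in the tensor product, multiplicity one from one-dimensional weight spaces, the raising-obstruction criterion relative to the stabilizer of $\gamma_\b m$, and the explicit check for $\beta=-\gamma_j$). One small correction to your heuristic: the weights $\Phi_{nc}^-$ of $\mf n^-$ do \emph{not} form a single $W_c$-orbit in the non-simply-laced cases (e.g.\ for $\mf g=\mf{sp}(2n,\CC)$ the long and short non-compact roots lie in distinct $W_c$-orbits), so $\mf n^-$ is not minuscule-like in general; what the argument actually uses is only that each weight space of $\mf n^-$ is one-dimensional, and the extra condition in \eqref{eq:CotangentBundleHeighestWeights} is precisely there to handle the non-single-orbit situation (cf.\ Remark~\ref{rmk:simplylaced}). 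This does not affect the validity of your argument, since the load-bearing step is in any case delegated to the cited proposition, just as in the paper.
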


\begin{remark}\label{rmk:simplylaced}
	If the root system $\Phi$ of $\mf g$ is simply laced, then the same argument as in 
	\cite[Remark~3.16]{S12a} shows that the additional condition on $\beta\in\Phi^-_{nc}$ in 
	\eqref{eq:CotangentBundleHeighestWeights} is always satisfied, so 
	\eqref{eq:CotangentBundleHeighestWeights} simplifies to
	\[
		\Lambda_\b m(\mf n^-) = (\gamma_\b m + \Phi_{nc}^-)\cap\Lambda_c.
	\]
\end{remark}

By a straightforward analysis of the weights of $\Poly_\b m(\mf n^-)\otimes\mf n^-$ for various $\b m$ (see \cite[Theorem~3.17]{S12a} for details) we obtain the $K$-type decomposition of $\Poly(\mf n^-,\mf n^-)$ and hence the following estimates for the multiplicities of the $U$-types contained in $L^2(X,T^{(1,0)*})$.

\begin{theorem}\label{thm:PolyDecomposition}
	The $K$-type decomposition of $\Poly(\mf n^-,\mf n^-)$ is given by
	\[
		\Poly(\mf n^-,\mf n^-) = \bigoplus_{\lambda\in\Lambda(\mf n^-)} M^\lambda\cdot E_\lambda,
	\]
	where $\Lambda(\mf n^-) = \bigcup_{\b m\in\NN_\geq^r}\Lambda_\b m(\mf n^-)$ and the multiplicity 
	$M^\lambda$ of $\lambda$ satisfies
	\begin{align*}
		M^\lambda =
			\begin{cases}
				\#\Set{i\in\{1,\ldots,r\}}{m_{i-1}>m_i}
				&,\ \text{if }\lambda = \gamma_\b m\text{ with }\b m\in\NN_\geq^r,\\
				1 &,\ \text{else.}
			\end{cases}
	\end{align*}
	Here, $m_{-1}:=\infty$. Furthermore, the $U$-type $V_\lambda$ occurs in $L^2(X,T^{(1,0)*})$ only 
	if $\lambda\in\Lambda(\mf n^-)$, and its multiplicity $m^\lambda$ is bounded by $M^\lambda$.
\end{theorem}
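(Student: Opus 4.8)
The statement has two parts: first, the precise $K$-type decomposition of $\Poly(\mf n^-,\mf n^-)$, and second, the multiplicity bound $m^\lambda\leq M^\lambda$ together with the support condition $\lambda\in\Lambda(\mf n^-)$ for $U$-types in $L^2(X,T^{(1,0)*})$. The second part is essentially immediate: by Corollary~\ref{cor:multiplicitybounds} we already know $m^\lambda\leq M^\lambda$ with $M^\lambda$ the multiplicity of $E_\lambda$ in $\Poly(\mf n^-,E)=\Poly(\mf n^-,\mf n^-)$, and $\Lambda_E\subseteq\Lambda(\mf n^-)$ follows since $\Poly^2(\mf n^-,\mf n^-)$ is a $K$-submodule of $\Poly(\mf n^-,\mf n^-)$. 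So the real content is the first part, the multiplicity formula for $M^\lambda$.

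\medskip

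For the $K$-type decomposition, I would start from the Hua--Kostant--Schmid decomposition \eqref{eq:HKSDecomposition}, which gives $\Poly(\mf n^-,\mf n^-)=\bigoplus_{\b m\in\NN^r_\geq}\Poly_\b m(\mf n^-)\otimes\mf n^-$, and then feed in Lemma~\ref{lem:tensordecomp}, which already describes each factor $\Poly_\b m(\mf n^-)\otimes\mf n^-$ as a multiplicity-free sum $\bigoplus_{\lambda\in\Lambda_\b m(\mf n^-)}E_\lambda$. Thus $M^\lambda=\#\{\b m\in\NN^r_\geq:\lambda\in\Lambda_\b m(\mf n^-)\}$, and the task reduces to counting, for a fixed $\Phi^+_c$-dominant weight $\lambda$, how many $\b m$ satisfy $\lambda-\gamma_\b m\in\Phi^-_{nc}\cup\{0\}$ subject to the extra root-theoretic obstruction in \eqref{eq:CotangentBundleHeighestWeights}. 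For this I would follow the strategy of \cite[Theorem~3.17]{S12a} (cited in the excerpt): analyze when $\lambda-\gamma_\b m$ is a negative non-compact root versus when it is zero. The generic case $\lambda-\gamma_\b m=\beta\in\Phi^-_{nc}$, $\beta\neq 0$: here I expect $\b m$ to be essentially unique for a given $\lambda$, because two distinct $\gamma_\b m,\gamma_{\b m'}$ differing by an element of the root lattice that lies in the (small, discrete) difference set $\Phi^-_{nc}-\Phi^-_{nc}$ forces a rigid relation; a careful case check using strong orthogonality of the $\gamma_i$ and the structure of $\Phi^-_{nc}$ should rule out coincidences except in the degenerate situation. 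The special case $\lambda=\gamma_\b m$ (i.e. $\beta=0$): then $\lambda$ can also arise from $\gamma_{\b m'}$ with $\gamma_{\b m'}-\gamma_\b m\in\Phi^-_{nc}$, and by the last sentence of Lemma~\ref{lem:tensordecomp}, $\gamma_{\b m-e_j}\in\Lambda_\b m(\mf n^-)$ exactly when $m_j>m_{j+1}$; writing $\lambda=\gamma_{\b n}$ and letting $\b m=\b n+e_i$ range over those $i$ with $n_{i-1}>n_i$ (convention $n_{-1}=\infty$) produces precisely the count $\#\{i:m_{i-1}>m_i\}$ claimed, plus the one contribution from $\b m=\b n$ itself—wait, one must be careful whether $\b n$ itself contributes; tracking the conventions $m_{-1}=\infty$ versus the "$\gamma_{\b m-e_j}$" indexing is exactly where the "+1 from $i$ such that\ldots" bookkeeping lives.

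\medskip

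\textbf{The main obstacle} will be this combinatorial bookkeeping in the special case $\lambda=\gamma_\b m\in\Gamma$: one has to show that the only $\b m'$ with $\gamma_\b m\in\Lambda_{\b m'}(\mf n^-)$ are $\b m$ itself and the shifts $\b m+e_i$ for indices $i$ with $m_{i-1}>m_i$, and that each such shift genuinely satisfies the extra condition $\beta+\alpha\notin\Phi^-_{nc}$ for all simple compact $\alpha$ with $\gamma_{\b m'}(H_\alpha)=0$. This requires knowing which differences $\gamma_i+\cdots$ are negative non-compact roots, and it is a finite but delicate check that depends on whether $\Phi$ is simply laced (Remark~\ref{rmk:simplylaced} simplifies it) or not. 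Once this counting lemma is established, assembling $M^\lambda=\sum_{\b m}[\lambda\in\Lambda_\b m(\mf n^-)]$ into the stated case distinction is routine, and the $U$-type statement follows by the citation to Corollary~\ref{cor:multiplicitybounds} as noted above. I would therefore organize the proof as: (i) reduce to the counting problem via HKS and Lemma~\ref{lem:tensordecomp}; (ii) handle $\lambda\notin\Gamma$ (uniqueness of $\b m$, giving $M^\lambda=1$); (iii) handle $\lambda=\gamma_\b m\in\Gamma$ via the shift analysis, invoking the last assertion of Lemma~\ref{lem:tensordecomp}; (iv) read off the $U$-type bound from Corollary~\ref{cor:multiplicitybounds}.
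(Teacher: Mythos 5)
Your proposal follows essentially the same route as the paper, which likewise obtains the decomposition by combining the Hua--Kostant--Schmid decomposition \eqref{eq:HKSDecomposition} with Lemma~\ref{lem:tensordecomp}, counting for each $\lambda$ the parameters $\b m$ with $\lambda\in\Lambda_\b m(\mf n^-)$ (deferring the weight analysis to the analogue in \cite[Theorem~3.17]{S12a}), and reading off the $U$-type bound from Corollary~\ref{cor:multiplicitybounds}. The one point you left unresolved settles cleanly: since $0\notin\Phi^-_{nc}$, the parameter $\b n$ itself never contributes to $\lambda=\gamma_{\b n}$, and the count comes entirely from the shifts $\b n+e_i$ with $n_{i-1}>n_i$, yielding exactly the stated formula.
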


Due to our general analysis of the $L^2$-condition in Section~\ref{sec:ResOnL2Cond} we can say slightly more. Since $\mu(H_{\alpha_1})=-2$, Corollary~\ref{cor:propconverse} yields
\begin{align}\label{eq:L2ConditionCotangentBundle}
	\Poly_\b m(\mf n^-)\otimes\mf n^-\subseteq\Poly^2(\mf n^-,\mf n^-)\iff m_r\geq 2.
\end{align}
Therefore, $m^\lambda = M^\lambda$ for all $\lambda=\gamma_\b m + \beta\in\Lambda(\mf n^-)$ with $m_r\geq 2$, $\beta\in\Phi^-_{nc}$. However, for $m_r\leq 1$ it is quite a hard problem to decide which $K$-types in $\Poly_\b m(\mf n^-)\otimes\mf n^-$ satisfy the $L^2$-condition. There also might be $K$-types in $\Poly^2(\mf n^-,\mf n^-)$ not contained properly in $\Poly_\b m(\mf n^-)\otimes\mf n^-$ for any $\b m$, so it is not sufficient to consider the $K$-types in $\Poly_\b m(\mf n^-)\otimes\mf n^-$ for each $\b m$ separately. We refer to \cite{S13b} for a more detailed discussion of this in the case where $X$ is a Grassmannian manifold. In that special case we are able to write down explicitly all highest weight vectors of $\Poly(\mf n^-,\mf n^-)$, so the $L^2$-condition becomes as concrete as possible. So far, our investigation indicates that condition \eqref{eq:necessarycondition} of Proposition~\ref{prop:necessarycondition} is not just necessary, but also sufficient -- when applied to highest weight vectors, i.e.,

\begin{conjecture}
	Let $p\in\Poly(\mf n^-,E)$ be a highest weight vector of weight $\lambda\in\mf h^*$. Then
	$p\in\Poly^2(\mf n^-,E)$ if and only if
	\[
		\deg_{t_i} p(t_1\bar e_1+\cdots+t_r\bar e_r) \leq 
		\lambda(H_{\gamma_i})\quad\text{for all $i$.}
	\]
	Moreover, if $p\in\Poly_\b m(\mf n^-)\otimes E$, then $m_i\leq\deg_{t_i} p$.
\end{conjecture}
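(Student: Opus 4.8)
The direction ``$\Rightarrow$'' (which does not even need the highest weight hypothesis) is exactly Proposition~\ref{prop:necessarycondition}, so the new content is the converse together with the ``moreover'' clause. The plan is to first reduce the converse, by a $K$-averaging argument, to a purely representation-theoretic statement about the $K$-type $E_\lambda$ generated by $p$, and then to attack that statement via the $\mf{sl}_2$-theory of the strongly orthogonal roots $\gamma_i$.

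\emph{Reduction.} Let $E_\lambda\subseteq\Poly(\Vm,E)$ be the $K$-type generated by $p$, and fix a weight basis $(v_j)_j$ of $E_\lambda$ with weights $(\nu_j)_j$. Since $\Poly^2(\Vm,E)$ is $K$-invariant and $p$ generates $E_\lambda$, we have $E_\lambda\subseteq\Poly^2(\Vm,E)$ if and only if $I(p)<\infty$, with $I$ as in \eqref{eq:integralcondition}. Write $\tau(k^{-1})p=\sum_j c_j(k)\,v_j$ with bounded functions $c_j$ on $K$. Because each $v_j$ is a weight vector and $\B{z_\b t}{-\bar z_\b t}=\exp\big(\sum_i\ln(1+t_i^2)H_{\gamma_i}\big)$ by \eqref{eq:BergmanDecomp}, one has $\tau(\B{z_\b t}{-\bar z_\b t}^{-\half})v_j=\prod_i(1+t_i^2)^{-\nu_j(H_{\gamma_i})/2}v_j$, so (using the polar decomposition exactly as in the proof of Proposition~\ref{prop:necessarycondition}) the integrand of $I(p)$ at $z=kz_\b t$ is bounded by Cauchy--Schwarz by
\begin{align*}
	(\dim E_\lambda)\sum_j |c_j(k)|^2\prod_i(1+t_i^2)^{-\nu_j(H_{\gamma_i})}\,\big|v_j(\bar z_\b t)\big|^2\,\omega(\b t).
\end{align*}
Integrating over $K$ (using $\int_K|c_j(k)|^2\,dk<\infty$), estimating $|v_j(\bar z_\b t)|^2\leq C\prod_i(1+t_i^2)^{\deg_{t_i}v_j(\bar z_\b t)}$ and $\prod_{i<j}|t_i^2-t_j^2|^a\leq\prod_i(1+t_i^2)^{a(r-1)}$ in $\omega(\b t)$, and substituting $s_i=t_i^2/(1+t_i^2)$, each summand becomes a product of Beta integrals $\int_0^1 s_i^{\,b}(1-s_i)^{\,\nu_j(H_{\gamma_i})-\deg_{t_i}v_j(\bar z_\b t)}\,ds_i$ (using $p=2+a(r-1)+b$ from \eqref{eq:pRelation}, exactly as in the proof of Theorem~\ref{thm:MinorIntegral}), which are finite precisely when $\deg_{t_i}v_j(\bar z_\b t)\leq\nu_j(H_{\gamma_i})$ for all $i$. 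Hence $E_\lambda\subseteq\Poly^2(\Vm,E)$ if \eqref{eq:necessarycondition} holds for every weight vector of $E_\lambda$; together with Proposition~\ref{prop:necessarycondition} this gives the clean equivalence: $E_\lambda\subseteq\Poly^2(\Vm,E)$ if and only if \eqref{eq:necessarycondition} holds for \emph{every} weight vector of $E_\lambda$. Thus the conjecture is equivalent to the assertion that \eqref{eq:necessarycondition} for the highest weight vector $p$ already forces it for all weight vectors of $E_\lambda$.

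\emph{The remaining step.} For this I would use the $\mf{sl}_2$-triple $(X_{\gamma_i},H_{\gamma_i},Y_{\gamma_i})$, noting that $\bar e_i$ is a positive multiple of $Y_{\gamma_i}$ and that strong orthogonality makes $\exp(-sH_{\gamma_i})$ fix each $\bar e_j$ ($j\neq i$) while scaling $\bar e_i$ by $e^{2s}$. Evaluating the $\tau$-weight identity $v(\exp(-sH_{\gamma_i})y)=e^{s\,\nu(H_{\gamma_i})}\rho(\exp(-sH_{\gamma_i}))v(y)$ at $y=\bar z_\b t$ and comparing powers of $u:=e^{2s}t_i$ (the $E$-weight-$\chi$ part of $\rho(\exp(-sH_{\gamma_i}))$ contributing a factor $(u/t_i)^{-\chi(H_{\gamma_i})/2}$) yields, for a weight vector $v$ of weight $\nu$,
\[
	\deg_{t_i}v(t_1\bar e_1+\cdots+t_r\bar e_r)=\tfrac12\big(\nu(H_{\gamma_i})-m_i(v)\big),
\]
where $m_i(v)$ is the smallest $H_{\gamma_i}$-value among the $E$-weights occurring in the $E$-valued polynomial $\b t\mapsto v(\bar z_\b t)$. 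So \eqref{eq:necessarycondition} for $v$ is equivalent to $m_i(v)\geq-\nu(H_{\gamma_i})$, and the task becomes showing that this lower bound on the lowest $E$-weight surviving restriction to the polar slice propagates from $p$ to all of $E_\lambda$.

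The hard part is precisely this propagation step: controlling which $E$-weight components survive restriction to $\RR\bar e_1\oplus\cdots\oplus\RR\bar e_r$. The intuition is that the \emph{extremal} vector $p$ is the worst case, since evaluating it on the slice produces the lowest $H_{\gamma_i}$-weights of $E$; I would try to make this precise by generating $E_\lambda$ from $p$ with the compact lowering operators $Y_\alpha$, $\alpha\in\Phi_c^+$, and tracking how $m_i(\cdot)$ and $(\cdot)(H_{\gamma_i})$ change under each step, using $[Y_\alpha,\bar e_j]\in\Vm$, the bounds $\alpha(H_{\gamma_i})\in\{0,1\}$ for $\alpha\in\Phi_{nc}^+\setminus\{\gamma_i\}$, and the $W_c$-orbit argument from the proof of Proposition~\ref{prop:BergmanEigenvalues}. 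For the ``moreover'' clause, the $E$-trivial case of the displayed formula gives $\deg_{t_i}p_\b m(\bar z_\b t)=m_i$ (indeed $p_\b m(\bar z_\b t)$ is a nonzero multiple of $t_1^{m_1}\cdots t_r^{m_r}$); since a nonzero highest weight vector $p$ of a $K$-type inside $\Poly_\b m(\Vm)\otimes E$ necessarily has nonzero component in $\CC\,p_\b m\otimes E^{\lambda-\gamma_\b m}$ (cf.\ the proof of \cite[Proposition~3.15]{S12a} underlying Lemma~\ref{lem:tensordecomp}), the $E$-weight $\lambda-\gamma_\b m$ occurs in $p(\bar z_\b t)$, so $m_i(p)\leq(\lambda-\gamma_\b m)(H_{\gamma_i})$ and hence $\deg_{t_i}p(\bar z_\b t)=\tfrac12(\lambda(H_{\gamma_i})-m_i(p))\geq\tfrac12\gamma_\b m(H_{\gamma_i})=m_i$. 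A complete verification of the propagation step for $X$ a Grassmannian and $\sE=T^{(1,0)*}$, where all highest weight vectors of $\Poly(\Vm,\Vm)$ can be written down explicitly, is carried out in \cite{S13b}.
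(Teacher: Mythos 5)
First, note that the statement you are proving is stated in the paper as a \emph{conjecture}: the paper offers no proof, and says explicitly that it does not yet have enough insight into the $L^2$-condition to settle it. So your attempt has to stand on its own, and as written it does not. The parts you carry out are correct and genuinely useful. The Cauchy--Schwarz/Beta-integral reduction (run in parallel with the proofs of Proposition~\ref{prop:necessarycondition} and Theorem~\ref{thm:MinorIntegral}, using \eqref{eq:BergmanDecomp} and \eqref{eq:pRelation}) does show that for a $K$-type $E_\lambda\subseteq\Poly(\Vm,E)$ one has $E_\lambda\subseteq\Poly^2(\Vm,E)$ if and only if \eqref{eq:necessarycondition} holds for \emph{every} weight vector of $E_\lambda$. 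The homogeneity identity $\deg_{t_i}v(\bar z_{\b t})=\tfrac12\bigl(\nu(H_{\gamma_i})-m_i(v)\bigr)$ is a correct consequence of strong orthogonality, i.e.\ of $\gamma_j(H_{\gamma_i})=2\delta_{ij}$. And your argument for the ``moreover'' clause is essentially complete, resting on the standard fact that a nonzero highest weight vector in $V\otimes W$ ($V,W$ irreducible) has nonzero component in $v_{\mathrm{high}}\otimes W$; you should state and prove that fact explicitly, since it is the crux of that clause.

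The genuine gap is the step you yourself flag as ``the hard part'': showing that \eqref{eq:necessarycondition} for the highest weight vector $p$ propagates to all weight vectors of $E_\lambda$, equivalently that $m_i(v)\geq-\nu(H_{\gamma_i})$ for every weight vector $v$ of weight $\nu$ once $m_i(p)\geq-\lambda(H_{\gamma_i})$. All of the difficulty of the conjecture is concentrated there. The quantity $m_i(v)$ records which $E$-weight components of $v$ survive restriction to the slice $\RR\bar e_1\oplus\cdots\oplus\RR\bar e_r$, and applying a compact lowering operator $Y_\alpha$ both shifts the $\tau$-weight by $-\alpha$ and can either create new, lower $E$-weight components on the slice (through the $\rho(Y_\alpha)$ summand of $d\tau(Y_\alpha)$) or annihilate the extremal one by cancellation between the two summands of $d\tau(Y_\alpha)$. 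Your sketch offers no mechanism ruling out that, along a chain of lowering operators, $m_i(\cdot)$ drops faster than $-\nu(H_{\gamma_i})$ does; note also that $\nu(H_{\gamma_i})$ is not monotone along such a chain for $i>1$, since $\alpha(H_{\gamma_i})$ may be negative for $\alpha\in\Phi_c^+$, so the required inequality tightens and loosens in a way that has to be tracked carefully. Until this propagation is proved (or a counterexample found), the ``if'' direction remains open: your reduction is a clean and worthwhile reformulation of the conjecture, not a proof of it.
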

On order to make use of this conjecture even in cases where highest weight vectors are not known explicitly, it also remains to determine a precise relation between the $t_i$-degrees of $p\in\Poly_\b m(\mf n^-)\otimes E$, its weight $\lambda$, and $\b m$.


\bibliographystyle{amsplain}
\bibliography{bibdb}

\end{document}